\def\Dbar{\leavevmode\lower.6ex\hbox to 0pt{\hskip-.23ex \accent"16\hss}D}
  \def\cftil#1{\ifmmode\setbox7\hbox{$\accent"5E#1$}\else
  \setbox7\hbox{\accent"5E#1}\penalty 10000\relax\fi\raise 1\ht7
  \hbox{\lower1.15ex\hbox to 1\wd7{\hss\accent"7E\hss}}\penalty 10000
  \hskip-1\wd7\penalty 10000\box7}
  \def\cfudot#1{\ifmmode\setbox7\hbox{$\accent"5E#1$}\else
  \setbox7\hbox{\accent"5E#1}\penalty 10000\relax\fi\raise 1\ht7
  \hbox{\raise.1ex\hbox to 1\wd7{\hss.\hss}}\penalty 10000 \hskip-1\wd7\penalty
  10000\box7} \def\cprime{$'$}
\renewcommand{\leq}{\leqslant}
\renewcommand{\geq}{\geqslant}
\numberwithin{equation}{section}
\newcommand{\N}{\mathbf{N}}
\newcommand{\E}{\mathbf{E}}
\newcommand{\Z}{\mathbf{Z}}
\renewcommand{\P}{\mathbf{P}}
\newcommand{\Q}{\mathbf{Q}}
\newcommand{\F}{\mathbf{F}}
\newcommand{\mods}[1]{\,(\mathrm{mod}\,{#1})}
\DeclareMathOperator{\Vol}{Vol}
\DeclareMathOperator{\res}{Res}
\DeclareMathOperator{\Imag}{Im}
\DeclareMathOperator{\real}{Re}
\DeclareMathOperator{\tr}{\mathrm{Tr}}
\DeclareMathOperator{\Aut}{Aut}
\DeclareMathOperator{\cond}{\mathbf{c}}
\DeclareMathOperator{\Ad}{Ad}
\newcommand{\eps}{\varepsilon}
\renewcommand{\rho}{\varrho}
\DeclareMathOperator{\SL}{SL}
\DeclareMathOperator{\GL}{GL}
\DeclareMathSymbol{\gena}{\mathord}{letters}{"3C}
\DeclareMathSymbol{\genb}{\mathord}{letters}{"3E}
\def\sums{\mathop{\sum \Bigl.^{*}}\limits}
\newcounter{bnd}
\theoremstyle{plain}
\newtheorem{theorem}{Theorem}[section]
\newtheorem*{theorem*}{Theorem}
\newtheorem{lemma}[theorem]{Lemma}
\newtheorem{corollary}[theorem]{Corollary}
\newtheorem{proposition}[theorem]{Proposition}
\theoremstyle{remark}
\theoremstyle{definition}
\renewcommand{\geq}{\geqslant}
\renewcommand{\leq}{\leqslant}
\begin{document}
 \title{Bounds for traces of Hecke operators and applications to modular and elliptic curves over a finite field}
 \author{Ian Petrow}
 \address{ETH Z\"urich - Departement Mathematik \\
HG G 66.4 \\
R\"amistrasse 101 \\
8092 Z\"urich \\
Switzerland }
\email{ian.petrow@math.ethz.ch}
\thanks{This work was supported by Swiss National Science Foundation grant PZ00P2\_168164.}

 \maketitle
 
 \begin{abstract}
 We give an upper bound for the trace of a Hecke operator acting on the space of holomorphic cusp forms with respect to certain congruence subgroups.  Such an estimate has applications to the analytic theory of elliptic curves over a finite field, going beyond the Riemann hypothesis over finite fields.  As the main tool to prove our bound on traces of Hecke operators, we develop a Petersson formula for newforms for general nebentype characters.
 \end{abstract}

\section{Introduction}
\subsection{Statement of Results}
 Let $S_\kappa(\Gamma,\epsilon)$ be the space of holomorphic cusp forms of weight $\kappa$, for a subgroup $\Gamma$ of a Hecke congruence group, and of nebentype character $\epsilon$.  We write $\tr( T | S_\kappa(\Gamma,\epsilon))$ for the trace of a linear operator $T$ acting on $S_\kappa(\Gamma,\epsilon)$.  The aim of this paper is to give estimates for $\tr(T_m | S_\kappa(\Gamma,\epsilon)),$ where $T_m$ is the $m$th Hecke operator, as the parameters $m,\kappa,\Gamma,$ and $\epsilon$ vary simultaneously.
 
 Consider first the case that $\Gamma = \Gamma_0(N)$ and $\epsilon$ is any Dirichlet character modulo $N$.  Let $d(m)$ denote the number of divisors of $m$, $\sigma(m)$ the sum of the divisors of $m$, and let $\psi(N) = [\Gamma_0(N):\SL_2(\Z)]= N \prod_{p\mid N}\big(1+ \frac{1}{p}\big)$.  We assume that $\kappa\geq 2$ an integer throughout the paper. Deligne's theorem tells us that each eigenvalue $\lambda(m)$ of $T_m$ satisfies $|\lambda(m)| \leq d(m)m^\frac{\kappa-1}{2}$.  Therefore we have the ``trivial'' estimate on the trace \begin{equation}\label{eq:trivial}\tr(T_m | S_\kappa(\Gamma_0(N),\epsilon)) \leq  \dim S_\kappa(\Gamma_0(N),\epsilon) d(m) m^{\frac{\kappa-1}{2}}\leq \frac{(\kappa-1) \psi(N)}{12} d(m) m^{\frac{\kappa-1}{2}}.\end{equation} For the bound on $\dim S_\kappa(\Gamma_0(N),\epsilon)$, see e.g.~\cite[Cor 8]{Ross}.  The power of $m$ in \eqref{eq:trivial} is sharp by the Sato-Tate distribution for Hecke eigenvalues.
 On the other hand, by a careful analysis using the Eichler-Selberg trace formula, Conrey, Duke and Farmer \cite{ConreyDukeFarmer} and in more generality Serre \cite[Prop.~4]{SerreVertical} show that if $\epsilon(-1)=(-1)^\kappa$ then
\begin{equation}\label{eq:Serre}\tr(T_m | S_\kappa(\Gamma_0(N),\epsilon))= \frac{\kappa-1}{12} \epsilon(m^{\frac{1}{2}}) m^{\frac{\kappa}{2} - 1} \psi(N) + O\left( (\sigma(m) \max_{f^2<4m} \psi(f) + d(m) N^{\frac{1}{2}}) m^{\frac{\kappa-1}{2}} d(N)\right),\end{equation}
where $\epsilon(m^{1/2})$ is understood to be $0$ unless $m$ is a perfect square. If $\epsilon(-1)\neq (-1)^\kappa$ then $S_\kappa(\Gamma_0(N),\epsilon)= \{0\}$, so the left hand side vanishes identically.  We expect the estimate \eqref{eq:Serre} to be sharp if $m$ is fixed and $\kappa + N \to \infty$.  

Write $\cond(\epsilon)$ for the conductor of the Dirichlet character $\epsilon$, and $\cond^*(\epsilon) = \prod_{p \mid \cond(\epsilon)} p$ for its square-free part. In this paper we prove:

\begin{theorem}\label{MT1}
Suppose that $\epsilon(-1)=(-1)^\kappa$, $(N,m)=1$, and that $m\cond(\epsilon) \cond^*(\epsilon)\ll (N^4\kappa^{10/3})^{1-\eta}$ for some $\eta>0$.  Then we have that \begin{equation}\label{eq:peterssonbound}\tr(T_m | S_\kappa(\Gamma_0(N),\epsilon)) = \frac{\kappa-1}{12} \epsilon(m^{\frac{1}{2}}) m^{\frac{\kappa}{2} - 1} \psi(N) + O_{\eta,\eps}\left(N^{\frac{10}{11}}m^{\frac{\kappa-1}{2}+\frac{1}{44}}\kappa^{\frac{61}{66}}\cond(\epsilon)^{\frac{1}{44}} \cond^*(\epsilon)^{\frac{1}{44}}(Nm\kappa )^\eps\right).\end{equation}
\end{theorem}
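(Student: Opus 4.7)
The plan is to apply the newform Petersson formula with general nebentype (developed later in the paper) to rewrite $\tr(T_m|S_\kappa(\Gamma_0(N),\epsilon))$ as a main term plus sums of generalized Kloosterman sums weighted by Bessel functions, and then to exploit square-root cancellation in these Kloosterman sums by algebrogeometric means going beyond the pointwise Weil bound.

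First, via the Atkin--Lehner--Li theory I would decompose
\[
\tr(T_m|S_\kappa(\Gamma_0(N),\epsilon))=\sum_{\cond(\epsilon)\mid d\mid N}c_{d,N}\,\tr(T_m|S_\kappa^{\mathrm{new}}(\Gamma_0(d),\epsilon))
\]
into contributions from newspaces, with explicit multiplicities $c_{d,N}$ coming from the old-form injections. On each newspace, the newform Petersson formula expresses a harmonically weighted sum of Hecke eigenvalues as a diagonal contribution plus a sum of twisted Kloosterman sums $S_\epsilon(m,m;c)$ over moduli $c$ divisible by $d$, weighted by $J_{\kappa-1}(4\pi m/c)$. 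The passage from the Petersson weights $1/\|f\|^2$ to the uniform weights appearing in the trace is handled by inserting the symmetric-square $L$-value $L(1,\syms f)$ (which is essentially of unit size) and controlling the resulting secondary main terms. Summing over $d\mid N$, the diagonal contribution recovers the predicted main term $\frac{\kappa-1}{12}\epsilon(m^{1/2})m^{\kappa/2-1}\psi(N)$.

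Second, the error reduces to bounding a sum of the form $\sum_{N\mid c}\beta(c)\,S_\epsilon(m,m;c)\,c^{-1}J_{\kappa-1}(4\pi m/c)$. The plan is to split dyadically at a threshold $C_0$. For $c\geq C_0$, the pointwise Weil bound $|S_\epsilon(m,m;c)|\ll d(c)(m,c)^{1/2}c^{1/2}$ combined with standard Bessel estimates recovers a Serre-type bound. For $c<C_0$, one interprets $c\mapsto S_\epsilon(m,m;c)$ as the trace function of a hyper-Kloosterman sheaf tensored with the Kummer sheaf $\Lb_\epsilon$; Deligne's Riemann Hypothesis applied to this sheaf, combined with a P\'olya--Vinogradov-type completion in the $c$-aspect, yields square-root cancellation in the sum over $c$. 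Optimising $C_0$ and tracking polynomial factors produces the exponents $N^{10/11}$, $m^{1/44}$, $\kappa^{61/66}$; the hypothesis $m\,\cond(\epsilon)\,\cond^*(\epsilon)\ll(N^4\kappa^{10/3})^{1-\eta}$ is precisely what guarantees the algebrogeometric regime is active.

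The principal obstacle is the algebraic-geometric analysis of the relevant Kloosterman sheaves twisted by the nebentype: one must certify that their geometric monodromy group remains large for arbitrary $\epsilon$ (ruling out parasitic descents that would destroy square-root cancellation) and estimate their conductor as a function of $\cond(\epsilon)$ and $\cond^*(\epsilon)$. The separate appearance of $\cond^*(\epsilon)$ in the bound signals that the ramification analysis at primes dividing $\cond(\epsilon)$ to less than full multiplicity is genuinely delicate, and a secondary obstacle is combinatorial bookkeeping of the Atkin--Lehner multiplicities together with the harmonic-weight removal so that error terms from the $L(1,\syms f)$ insertion do not swamp the claimed bound.
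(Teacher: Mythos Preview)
Your overall architecture (Atkin--Lehner reduction, newform Petersson formula, diagonal gives the main term) matches the paper, but the mechanism you propose for the error term is not the one that produces the stated exponents, and as written it would not work.

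The paper does \emph{not} obtain any cancellation in the modulus $c$ beyond the pointwise Weil bound. The off-diagonal is estimated termwise via Lemma~4.1 (an ILS-type bound) together with the Knightly--Li Weil bound $|S_\epsilon(a,b;c)|\leq d(c)(a,b,c)^{1/2}c^{1/2}\cond(\epsilon)^{1/4}\cond^*(\epsilon)^{1/4}$; this is exactly where $\cond^*(\epsilon)$ enters, not from any ramification analysis of sheaves. Your plan to treat $c\mapsto S_\epsilon(m,m;c)$ as a trace function and invoke Deligne to get square-root cancellation in the $c$-sum is essentially the Linnik--Selberg conjecture for these sums; the paper explicitly flags this as speculative (see the remark after Corollary~1.2), so it cannot be the source of an unconditional theorem.

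The input you are missing is the removal of the Petersson weights. You write that $L(1,\Ad^2 f)$ is ``essentially of unit size'' and the resulting secondary terms are controllable; but if one simply inserts the Dirichlet series for $L^{(N)}(1,\Ad^2 f)$ and bounds termwise, one recovers only $\tr(T_m)\ll_m (N\kappa)^{1+\eps}$, i.e.\ the trivial bound. The paper instead truncates this series at $x$, and controls the tail $\omega_f(x,y)$ via H\"older's inequality combined with the Duke--Kowalski $\GL_3$ large sieve (Proposition~6.2), which is only valid for length $\geq (N\kappa)^8$. This forces the exponent $20=2(8+2)$ in the error $\kappa N x^{-1/20}$; balancing against the Weil-bounded off-diagonal $x^{1/2}\kappa^{1/6}m^{1/4}(\cond(\epsilon)\cond^*(\epsilon))^{1/4}$ gives $x^{11/20}=N\kappa^{5/6}(m\cond(\epsilon)\cond^*(\epsilon))^{-1/4}$, and \emph{this} is where $N^{10/11}$, $m^{1/44}$, $\kappa^{61/66}$ come from. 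The hypothesis $m\cond(\epsilon)\cond^*(\epsilon)\ll (N^4\kappa^{10/3})^{1-\eta}$ is what keeps the optimal $x\geq 1$ (equivalently, the H\"older exponent $r$ bounded), not a condition activating an algebrogeometric regime.
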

We remark that the hypothesis that $m\cond(\epsilon) \cond^*(\epsilon)\ll (N^4\kappa^{10/3})^{1-\eta}$ for some $\eta>0$ in Theorem \ref{MT1} is no restriction in practice, since if the hypothesis fails then \eqref{eq:trivial} is a superior bound anyway. Indeed, the error term in \eqref{eq:peterssonbound} is smaller than that in both \eqref{eq:trivial} and \eqref{eq:Serre} when $$N^{\frac{8}{13}}\kappa^{\frac{122}{195}}(N\kappa)^\eps \cond(\epsilon)^{\frac{1}{65}} \cond^*(\epsilon)^{\frac{1}{65}} \ll m \ll \frac{(N^4\kappa^{\frac{10}{3}})^{1-\eta}}{\cond(\epsilon) \cond^*(\epsilon)}.$$ For example, if $\epsilon$ is trivial and the weight $\kappa$ is fixed, then \eqref{eq:peterssonbound} is better than \eqref{eq:trivial} and \eqref{eq:Serre} for $$N^{\frac{8}{13}+\eps} \ll m \ll N^{4-\eps}.$$
Note that our result requires the hypothesis $(N,m)=1$, whereas the estimates \eqref{eq:trivial} and \eqref{eq:Serre} do not.  We discuss the source of this condition in the sketch of the proof, below.

We are also interested in spaces of modular forms for groups other than $\Gamma_0(N)$.  In particular, for positive integers $M \mid N$ let 
\begin{equation}\Gamma(M,N) = \big\{ \left(\begin{smallmatrix} a& b \\ c & d \end{smallmatrix}\right) \in \SL_2(\Z) \text{ s.t. } a,d\equiv 1 \mods N, \,\,\, c \equiv 0 \mods {NM}\big\}.\end{equation}   These congruence groups interpolate between $\Gamma_1(N)= \Gamma(1,N)$ and $\Gamma(N) \simeq \Gamma(N,N).$  We write $S_\kappa(M,N)$ for the space of modular forms of weight $\kappa$ for the group $\Gamma(M,N)$ (without nebentype character).  Let $\delta(a,b)$ be the indicator function of $a=b$ and $\delta_c(a,b)$ be the indicator function of $a\equiv b \mods c$.  Let $T_m$ be the $m$th Hecke operator acting on $S_\kappa(M,N)$ and for $(d,N)=1$ let $\langle d \rangle$ the $d$th diamond operator.  These operators commute and $T_1 =\langle 1 \rangle = \text{id}$; for definitions see \cite[\S5.1, 5.2]{DiamondShurman} or \cite[\S4]{KaplanPetrow2}.  In particular, we have
\begin{equation}\label{eq:mnstart}\tr (\langle d \rangle T_m | S_\kappa( \Gamma(M,N)) )  = \sum_{\epsilon \mods  N} \epsilon(d)\tr(T_m | S_\kappa(\Gamma_0(NM),\epsilon)).\end{equation}
Applying \eqref{eq:trivial} to \eqref{eq:mnstart} we have \begin{equation}\label{trivial2}\tr (\langle d \rangle T_m | S_\kappa( \Gamma(M,N)) )   \leq  \frac{\kappa -1}{12} \varphi(N) \psi(NM)d(m) m^{\frac{\kappa-1}{2}}.\end{equation}   Meanwhile, summing \eqref{eq:Serre} over characters $\epsilon \mods N$ such that $\epsilon(-1)=(-1)^\kappa$ we find \begin{multline}\label{serre2}  \tr(  \langle d \rangle T_m | S_\kappa(\Gamma(M,N))) = \frac{\kappa-1}{24}m^{\frac{\kappa}{2}-1} \varphi(N) \psi(NM) \left(\delta_N(m^{\frac{1}{2}} d, 1) + (-1)^\kappa \delta_N(m^{\frac{1}{2}} d, -1)\right) \\ + O\Big( (\sigma(m) \max_{f^2<4m} \psi(f) + d(m) (MN)^{\frac{1}{2}}) m^{\frac{\kappa-1}{2}} d(MN)N \Big)  . \end{multline}
The following result improves on both \eqref{trivial2} and \eqref{serre2} in an intermediate range of parameters.

\begin{theorem}\label{MT2}
Suppose that $M \mid N$, $(N,m)=1$, and that $m \ll (N^6\kappa^{10/3})^{1-\eta}$ for some $\eta>0$. We have that \begin{multline*} \tr( \langle d \rangle T_m | S_\kappa(\Gamma(M,N)))= \frac{\kappa-1}{24}m^{\frac{\kappa}{2}-1} \varphi(N) \psi(NM) \left(\delta_N(m^{\frac{1}{2}} d, 1) + (-1)^\kappa \delta_N(m^{\frac{1}{2}} d, -1)\right) \\ + O_{\eta,\eps}\Big(M N^{\frac{41}{22}}m^{\frac{\kappa-1}{2}+\frac{1}{44}}\kappa^{\frac{61}{66}}(Nm\kappa)^{\eps}\Big).\end{multline*}
\end{theorem}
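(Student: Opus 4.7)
The natural plan is to start from the character decomposition \eqref{eq:mnstart}, expressing $\tr(\langle d \rangle T_m | S_\kappa(\Gamma(M,N)))$ as a sum of $\epsilon(d) \tr(T_m | S_\kappa(\Gamma_0(NM), \epsilon))$ over Dirichlet characters $\epsilon \bmod N$. Since $M \mid N$ and $(N,m)=1$ imply $(NM,m)=1$, the coprimality hypothesis of Theorem \ref{MT1} is satisfied, and only characters with $\epsilon(-1)=(-1)^\kappa$ contribute. I apply Theorem \ref{MT1} to each summand, viewing $\epsilon$ as a character modulo $NM$ via pullback so that $\cond(\epsilon)$ and $\cond^*(\epsilon)$ still divide $N$.

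For the main term, summing \eqref{eq:peterssonbound} gives
\[\frac{\kappa-1}{12} m^{\frac{\kappa}{2}-1} \psi(NM) \sum_{\substack{\epsilon \bmod N \\ \epsilon(-1)=(-1)^\kappa}} \epsilon(d m^{\frac{1}{2}}).\]
Using the parity projector $\mathbf{1}[\epsilon(-1)=(-1)^\kappa] = \frac{1}{2}(1+(-1)^\kappa \epsilon(-1))$ and orthogonality of characters mod $N$, the inner sum evaluates to $\frac{\varphi(N)}{2}\bigl(\delta_N(dm^{\frac{1}{2}},1) + (-1)^\kappa \delta_N(dm^{\frac{1}{2}},-1)\bigr)$, producing exactly the main term displayed in Theorem \ref{MT2}.

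For the error term, the sum of the error contributions from Theorem \ref{MT1} across $\epsilon$ is bounded by
\[(NM)^{\frac{10}{11}} m^{\frac{\kappa-1}{2}+\frac{1}{44}} \kappa^{\frac{61}{66}}(NMm\kappa)^\eps \sum_{\epsilon \bmod N} \cond(\epsilon)^{\frac{1}{44}} \cond^*(\epsilon)^{\frac{1}{44}},\]
and the character sum, decomposed by conductor, is $\sum_{q \mid N} \varphi^*(q) q^{\frac{1}{44}} (q^*)^{\frac{1}{44}}$ where $q^*$ denotes the square-free part of $q$. The range hypothesis $m\cdot\cond(\epsilon)\cond^*(\epsilon) \ll ((NM)^4\kappa^{10/3})^{1-\eta}$ of Theorem \ref{MT1} must be verified per character: using $\cond(\epsilon) \leq N$, it follows from the hypothesis $m \ll (N^6\kappa^{10/3})^{1-\eta}$ of Theorem \ref{MT2} at least when $M$ is not too small, while in the complementary ranges one reverts to \eqref{eq:trivial}, which itself dominates \eqref{eq:peterssonbound} there.

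The main obstacle will be extracting the sharp shape $MN^{\frac{41}{22}}$ from the character sum, as opposed to the weaker $N^{\frac{43}{22}} M^{\frac{10}{11}}$ that emerges from the naive bounds $\varphi^*(q) \leq q$ and $q, q^* \leq N$. Closing this gap, which is a factor of $(N/M)^{1/11}$, likely requires finer bookkeeping: split the characters into small- and large-conductor ranges, apply Theorem \ref{MT1} (most efficient for small $\cond(\epsilon)$) in the former, use the conductor-insensitive but linear-in-$M$ bound \eqref{eq:trivial} in the latter, and optimize the cutoff. Verifying the hypothesis of Theorem \ref{MT1} in all relevant regimes, and the consistency of the split with the $m$-range of Theorem \ref{MT2}, is the principal point to be checked carefully.
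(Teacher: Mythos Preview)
Your approach has a genuine gap that the proposed conductor-splitting fix cannot close. Applying Theorem~\ref{MT1} as a black box and then summing over $\epsilon\bmod N$ yields, as you correctly compute, an error of shape $M^{10/11}N^{43/22}$ rather than the target $MN^{41/22}$. The splitting you propose does not recover the missing factor $(N/M)^{1/11}$: for the large-conductor range the bound \eqref{eq:trivial} is of size $\kappa\psi(NM)$ per character, so even a single large-conductor character already contributes $\kappa NM$, which exceeds the target unless $m$ is huge; and \eqref{eq:Serre} contributes $m^{3/2+\eps}$ per character, which for $m$ anywhere near the upper end of the allowed range $N^{6(1-\eta)}$ swamps the target by many powers of $N$. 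No choice of cutoff balances these against the small-conductor sum.

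The reason is structural: the extra saving in Theorem~\ref{MT2} comes from cancellation \emph{between} characters inside the off-diagonal Kloosterman sums, and this is invisible once each $\epsilon$ has been estimated separately. The paper does not sum Theorem~\ref{MT1} over characters. Instead it opens up the decomposition $\tr = D+OD+E$ from Section~\ref{AforG0}, sums $D$ and $E$ trivially over $\epsilon$, but for $OD$ brings the $\epsilon$-sum all the way inside to the Kloosterman sum, forming
\[
T_W(a,b,c)=\sum_{\substack{\epsilon\bmod N\\ \epsilon(-1)=(-1)^\kappa\\ \cond(\epsilon)\mid W}}\epsilon(d)\overline{\epsilon}(b)F(W,\epsilon)S_\epsilon(a,b,c).
\]
Orthogonality of characters collapses the $\epsilon$-average into a congruence constraint on the Kloosterman variable, and Lemma~\ref{Tsum} bounds $|T_W(a,b,c)|\le \psi(c_1)d(c_2)(a,b,c_2)^{1/2}c_2^{1/2}$ with $c=c_1c_2$, $c_1\mid W^\infty$, $(c_2,W)=1$. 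Compared with summing the Weil bound of Lemma~\ref{Weil} over $\varphi(N)$ characters, this wins roughly a factor $N^{3/2}/W^{1/2}$ on the Kloosterman side, which after the optimization in $x$ produces the improved off-diagonal bound $OD^*\ll \kappa^{1/6}MN^{1/2+\eps}x^{1/2}m^{1/4}$ (versus $N^{3/2}x^{1/2}\kappa^{1/6}m^{1/4}$ from the naive sum), and hence the final exponent $MN^{41/22}$. You should therefore abandon the black-box plan and instead carry the character sum into the Kloosterman term before estimating.
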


\subsection{Applications to Modular and Elliptic Curves over a Finite Field}
Hecke operators appear throughout number theory, and estimates for their traces are especially relevant to equidistribution problems.  See for example \cite[\S5-\S8]{SerreVertical} and \cite{MurtySinha}. We mention here a few consequences in the analytic theory of modular and elliptic curves over a finite field.  

Let $C$ be a nonsingular projective curve of genus $g$ over a finite field $\F_q$ with $q$ elements.  Then we have (see e.g.~\cite[Ch.~11]{milneMF}) that $$| C(\F_{q^n})| = q^n + 1 - \sum_{i=1}^{2g} \alpha_i^n,$$ where $\{\alpha_i\}$ are the inverse zeros of the zeta function of $C$ $$ Z(C,T)= \frac{(1-\alpha_1 T) \cdots (1-\alpha_{2g}T)}{(1-T)(1-qT)}.$$ The Riemann hypothesis for curves over finite fields asserts that $|\alpha_i|= \sqrt{q}$ for all $i$.  Igusa \cite{IgusaKroneckerian} showed that there exists a non-singular projective model for $X_0(N)$ over $\Q$ whose reductions modulo primes $p$, $p\nmid N$ are also non-singular (see also the survey \cite[\S 9]{DiamondIm}), and so the preceding discussion applies to $X_0(N)$ when $p\nmid N$.  Since $g \sim \psi(N)/12$ as $N\to \infty$ we have that \begin{equation}\label{RHonly}|X_0(N)(\F_q)| = q+1 + O (\psi(N) q^{1/2}).\end{equation} In particular, $|X_0(N)(\F_q)| \sim q$ as $q\to \infty$ as soon as $q\gg N^{2+\delta}$ for some $\delta>0$.  On the other hand, the Eichler-Shimura correspondence (see e.g.~\cite[Thm.~11.14]{milneMF}) asserts that $$Z(X_0(N),T) =\frac{ \prod_{f \in H_2(N)} \left(1-\lambda_f(p) T + pT^2\right)}{(1-T)(1-pT)},$$ where $H_2(N)$ is a basis for $S_2(\Gamma_0(N))$ consisting of eigenforms of $\{T_p, p \nmid N\}$, and $\lambda_f(p)$ is the $T_p$ eigenvalue of $f$.  We therefore have $$|X_0(N)(\F_q)| = q+1 - \tr(T_q | S_2(\Gamma_0(N))) + p \tr(T_{q/p^2} | S_2(\Gamma_0(N))),$$ where we set $T_{p^{-1}}=0$.  Applying \eqref{eq:trivial}, \eqref{eq:Serre}, and Theorem \ref{MT1} we get \begin{corollary}\label{MC1}Suppose $q=p^v$ is a prime power such that $p \nmid N$.  We have $$|X_0(N)(\F_q)| = q+(p-1)\frac{\psi(N)}{12}\delta_2(v,0) + O_{\eps}\left( \min(\psi(N), q^{\frac{1}{44}}N^{\frac{10}{11}}(qN)^\eps, (q^{\frac{3}{2}}+N^{\frac{1}{2}})d(N)q^\eps )q^{\frac{1}{2}}\right).$$ In particular, the main term is larger than the error term as soon as $q \gg N^{\frac{40}{21}+\delta}$ for some fixed $\delta>0$. \end{corollary}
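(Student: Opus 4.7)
The plan is to substitute the Eichler--Shimura decomposition
$$|X_0(N)(\F_q)| = q+1 - \tr(T_q\mid S_2(\Gamma_0(N))) + p\,\tr(T_{q/p^2}\mid S_2(\Gamma_0(N))),$$
recorded just above the statement (with the convention $T_{p^{-1}}=0$), into each of the three trace bounds \eqref{eq:trivial}, \eqref{eq:Serre}, and Theorem \ref{MT1}, specialized to weight $\kappa=2$ and trivial nebentypus $\epsilon$. The coprimality hypothesis $(N,m)=1$ of Theorem \ref{MT1} is automatic for $m\in\{q,q/p^2\}$ since $p\nmid N$, and its size restriction reduces, for our $\kappa$ and $\epsilon$, to $q\ll N^{4-\eta}$; beyond this range the trivial bound is already superior.

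First I would extract the main term. Both \eqref{eq:Serre} and Theorem \ref{MT1} share the main term $\tfrac{\kappa-1}{12}\epsilon(m^{1/2})m^{\kappa/2-1}\psi(N)$, which under our specialization equals $\tfrac{\psi(N)}{12}$ when $m$ is a perfect square and $0$ otherwise. For $q=p^v$, both $q$ and $q/p^2$ are perfect squares precisely when $v$ is even (and $\geq 2$), giving a net main-term contribution of $(p-1)\tfrac{\psi(N)}{12}\delta_2(v,0)$; the harmless $+1$ from $q+1$ is absorbed into the error.

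Next I would bound the error by applying each of the three estimates to both traces. The elementary inequality $p(q/p^2)^{\alpha}\leq q^{\alpha}$, valid for $\alpha\geq 1/2$, shows that the $p\,\tr(T_{q/p^2})$ piece is controlled by the corresponding bound for $\tr(T_q)$. A short computation then yields, up to $(qN)^{\eps}$, the three competing errors $\psi(N)q^{1/2}$ from \eqref{eq:trivial}, $q^{1/44}N^{10/11}q^{1/2}$ from Theorem \ref{MT1}, and $(q^{3/2}+N^{1/2})d(N)q^{1/2}$ from \eqref{eq:Serre} (the last using $\sigma(q)\ll q^{1+\eps}$ and $\max_{f^2<4q}\psi(f)\ll q^{1/2+\eps}$). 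Taking the minimum gives the stated error term. The ``in particular'' claim is then an algebraic comparison: the middle bound is $\ll q$ iff $q\gg N^{40/21+\delta}$, and in this range the other two are subordinate either directly or after entering the regime $q\gg N^{2+\eps}$ where the trivial bound suffices on its own. I do not anticipate a serious obstacle here; the real work is all in Theorem \ref{MT1}, and the only delicate step is the bookkeeping needed to control the $p\,\tr(T_{q/p^2})$ piece and verify it does not spoil either the shape of the main term or the power of $q$ in the error.
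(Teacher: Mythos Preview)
Your proposal is correct and follows exactly the approach the paper takes: the paper simply states that the corollary follows by applying \eqref{eq:trivial}, \eqref{eq:Serre}, and Theorem \ref{MT1} to the Eichler--Shimura identity $|X_0(N)(\F_q)| = q+1 - \tr(T_q\mid S_2(\Gamma_0(N))) + p\,\tr(T_{q/p^2}\mid S_2(\Gamma_0(N)))$, and your write-up fills in precisely that computation. Your handling of the main term, the domination of the $p\,\tr(T_{q/p^2})$ contribution via $p(q/p^2)^\alpha\leq q^\alpha$ for $\alpha\geq 1/2$, and the range check for the ``in particular'' clause are all correct.
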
  Corollary \ref{MC1} shows that there is significant cancellation between the zeros $\alpha_i$ of $Z(X_0(N),T)$, and in this sense goes beyond the Riemann hypothesis for $Z(X_0(N),T)$.  Assuming square-root cancellation between the zeros, one might conjecture an error term of size $(qN)^{1/2+\eps}$ in Corollary \ref{MC1}, which would imply that the main term is larger than the error term whenever $q \gg N^{1+\delta}$ for some $\delta$. If one assumes the generalized Lindel\"of hypothesis for adjoint square $L$-functions, then the method in this paper produces an error term of size $q^{1/8+\eps}N^{1/2+\eps}$ in Corollary \ref{MC1} (see Lemma \ref{AFE}). In a much more speculative direction, if under the assumption $(mn,W)=1$ the upper bound $\ll_{\kappa,\eps} (mnW)^\eps W^{-1/2}$ for the sum appearing in Lemma \ref{ILSlem} holds (cf. the Linnik-Selberg conjecture), then the error term $(qN)^{1/2+\eps}$ in Corollary \ref{MC1} is admissible.

If $q$ is a square then we can compare the second main term in Corollary \ref{MC1} to the error term coming from \eqref{eq:Serre} in the range where $q$ is small compared to $N$. For example, in the special case that $p$ is a prime and $q=p^2$ we have 
\begin{corollary}
If $p,N \to \infty$ where $p$ runs through primes $p \nmid N$ then for any fixed $\delta>0$ we have 
$$|X_0(N)(\F_{p^2})| = \begin{cases} p^2 + O(p\psi(N)) & \text{ if } p^2 \gg N^{4-\delta} \\
p^2 + p \frac{\psi(N)}{12} + O_\eps (p^{\frac{23}{22}}N^{\frac{10}{11}}(qN)^\eps ) & \text{ if } N^{\frac{40}{21}-\delta} \ll p^2 \ll N^{4-\delta} \\
p \frac{\psi(N)}{12} + O_\eps (p^{\frac{23}{22}}N^{\frac{10}{11}}(qN)^\eps ) & \text{ if } N^{\frac{8}{13}+\delta} \ll p^2 \ll N^{\frac{40}{21}-\delta} \\
(p-1) \frac{\psi(N)}{12}  + O_\eps((p^4+N^{\frac{1}{2}}p)d(N)p^\eps) & \text{ if } p^2 \ll N^{\frac{2}{3}-\delta}.\end{cases}$$
\end{corollary}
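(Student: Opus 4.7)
The plan is to specialize Corollary~\ref{MC1} to $q = p^2$ and to perform a case analysis according to which of the three bounds inside the minimum dominates. Since $q = p^2$ gives $v = 2$ and $\delta_2(v,0) = 1$, Corollary~\ref{MC1} reads
\begin{equation*}
|X_0(N)(\F_{p^2})| = p^2 + (p-1)\tfrac{\psi(N)}{12} + O_\eps\!\big(\min\{\psi(N),\,p^{1/22}N^{10/11}(p^2N)^\eps,\,(p^3+N^{1/2})d(N)p^\eps\}\cdot p\big),
\end{equation*}
so after multiplying the minimum by $p$, the three candidate error terms are (i)~$p\psi(N)$ from the trivial bound~\eqref{eq:trivial}, (ii)~$p^{23/22}N^{10/11}(qN)^\eps$ from Theorem~\ref{MT1}, and (iii)~$(p^4+N^{1/2}p)d(N)p^\eps$ from~\eqref{eq:Serre}.

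Next I would locate the crossover thresholds between these bounds. Bound~(i) equals~(ii) when $p^{1/22} \asymp N^{1/11}$, i.e.~when $p^2 \asymp N^4$. Bound~(ii) equals~(iii) when $p^{23/22}N^{10/11} \asymp p^4$ (the $p^4$ term in (iii) dominates $N^{1/2}p$ at the crossover since it requires $p \gg N^{1/6}$), which gives $p^{65/22} \asymp N^{10/11}$, i.e.~$p^2 \asymp N^{8/13}$. The remaining threshold $p^2 \asymp N^{40/21}$ separating Cases~2 and~3 arises instead from asking when the \emph{primary} term $p^2$ can be absorbed into error~(ii): one has $p^2 \ll p^{23/22}N^{10/11}$ iff $p \ll N^{20/21}$, i.e.~$p^2 \ll N^{40/21}$.

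The proof is then a matter of absorbing secondary terms in each range. In Case~1 ($p^2 \gg N^{4-\delta}$) bound~(i) dominates and all of $(p-1)\psi(N)/12 = O(p\psi(N))$ is absorbed into the error. In Case~2 ($N^{40/21-\delta} \ll p^2 \ll N^{4-\delta}$) bound~(ii) dominates; one retains the main term $p^2 + p\psi(N)/12$, and the residual $-\psi(N)/12$ is absorbed into~(ii) via $\psi(N) \ll p^{23/22}N^{10/11}$, which holds whenever $p \gg N^{2/23}$ and is implied by the hypothesis. In Case~3 ($N^{8/13+\delta} \ll p^2 \ll N^{40/21-\delta}$) bound~(ii) still dominates and now $p^2$ itself is also absorbed into the error, as computed above, leaving only $p\psi(N)/12$. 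In Case~4 ($p^2 \ll N^{2/3-\delta}$) bound~(iii) is smallest, and $p^2$ is trivially absorbed since $p^2 \leq p^4 \leq (p^4 + N^{1/2}p)d(N)p^\eps$. These steps are essentially bookkeeping; I do not anticipate a substantial obstacle, since all the analytic content resides in Theorem~\ref{MT1} and Corollary~\ref{MC1}.
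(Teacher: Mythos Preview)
Your proposal is correct and follows the natural route of specializing Corollary~\ref{MC1} to $q=p^2$ and comparing the three candidate error terms; the paper presents this corollary without proof, as an immediate consequence of Corollary~\ref{MC1}. One small imprecision: in Case~4 bound~(iii) is not literally the smallest throughout the range $p^2 \ll N^{2/3-\delta}$ (for $N^{8/13}\ll p^2 \ll N^{2/3}$ bound~(ii) is smaller), but this is harmless since the minimum is always at most~(iii), and you only need~(iii) as an upper bound to match the stated error term.
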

The first of these cases is just \eqref{RHonly}, and the last is the Tsfasman-Vl\u adu\c t-Zink theorem \cite{TsfasmanVladutZink}, which has important applications to algebraic coding theory, see \cite[Ch.~5]{Moreno}. 

Using Theorem \ref{MT2} we can make more explicit statements about elliptic curves themselves.
Let $E$ be an elliptic curve defined over $\F_q$ and let $t_E= q+1-\#E(\F_q)$ be the trace of the associated Frobenius endomorphism.  Hasse's Theorem tells us that $|t_E| \leq 2 \sqrt{q}$. The set of $\F_q$-isomorphism classes of elliptic curves defined over $\F_q$ is naturally a probability space where the probability of a singleton is given by $$\P_q(\{E\}) = \frac{1}{q |\Aut_{\F_q}(E)|}.$$ We would like to study the expectations as $q\to \infty$ of various random variables associated to $t_E$ or the structure of the group of $\F_q$-rational points of $E$.  To be precise: let $A$ be a finite abelian group with at most two generators, and let $\Phi_A$ denote the indicator function of the event that there exists an injective group homomorphism $A\hookrightarrow E(\F_q)$.  Let $U_j(x)$ for $j\geq 0$ be the Chebyshev polynomials of the second kind.  The Chebyshev polynomials form an orthonormal basis for the Hilbert space $L^2([-1,1],\frac{2}{\pi} \sqrt{1-x^2}dx)$. N.~Kaplan and the author in  \cite[Thm.~2]{KaplanPetrow2} gave explicit formulas for the expectations $$ \E_{q}(U_j(t_E/2\sqrt{q}) \Phi_A) = \frac{1}{q} \sum_{\substack{E/\F_q \\ A \hookrightarrow E(\F_q)}} \frac{U_j(t_E/2 \sqrt{q})}{|\Aut_{\F_q}(E)|}$$ in terms of $\tr( \langle d \rangle T_m | S_\kappa(\Gamma(M,N)))$ and elementary arithmetic functions of $m,M,N,$ and $j$.  

Theorem \ref{MT2} yields the following refinement of the error term in the main corollary of \cite{KaplanPetrow2}. Let $$v(n_1,n_2) = \frac{n_1}{\psi(n_1) \varphi(n_1)n_2^2} \prod_{\ell \mid \frac{n_1}{(q-1,n_1)}} \left(1+\ell^{-1-2v_\ell\left(\frac{(q-1,n_1)}{n_2}\right)} \right).$$

\begin{corollary}\label{MC2}
Let $n_1=n_1(A)$ and $n_2=n_2(A)$ be the first and second invariant factors of $A$ (i.e.~we have $n_2 \mid n_1$).  Suppose that $(|A|,q)=1$ and $q \equiv 1 \mods {n_2}$.  Then $$\E_{q}(U_{j}(t_E/2\sqrt{q}) \Phi_A)  = v(n_1,n_2)\left( \delta(j,0) + O_{j,\eps}\left(\min( n_1 ,q^{\frac{1}{44}} n_1^{\frac{19}{22}})  n_1n_2q^{-\frac{1}{2}}(qn_1)^\eps \right)\right).$$ If $q \not \equiv 1 \mods {n_2}$, then $\E_q(U_j\Phi_A)$ vanishes identically. 

In particular, the traces of the Frobenius $t_E$ for $\{E/\F_q : A \hookrightarrow E(\F_q)\}$ become equidistributed with respect to the Sato–Tate measure as $q \to \infty$ through prime powers $q \equiv 1 \mods{n_2}$. The equidistribution is uniform in A as soon as $q\gg n_2^2 n_1^{\frac{41}{11}+\delta}$ for any fixed $\delta>0$. 
\end{corollary}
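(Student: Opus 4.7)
The strategy is to substitute the bound of Theorem \ref{MT2} into the exact formula of \cite[Thm.~2]{KaplanPetrow2}, which expresses $\E_q(U_j(t_E/2\sqrt{q}) \Phi_A)$ as a finite linear combination of Hecke operator traces of the form $\tr(\langle d \rangle T_m | S_{j+2}(\Gamma(M,N)))$, where $\kappa=j+2$, the parameters $M$ and $N$ are built from the invariant factors $n_1,n_2$ of $A$ (with $M\mid N$ and $N$ divisible by $n_1$), the Hecke index $m$ is essentially $q$, and the coefficients are elementary arithmetic functions of $n_1,n_2$ and the divisors in play. First I would record this formula and verify that its two running hypotheses match those of the corollary: $(|A|,q)=1$ is precisely the condition $(N,m)=1$ of Theorem \ref{MT2} and ensures $p\nmid N$ so that Igusa's smooth model applies, while $q\equiv 1\pmod{n_2}$ is forced by the Weil pairing on $A\hookrightarrow E(\F_q)$, without which both sides of the claim vanish identically.

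Next I would substitute Theorem \ref{MT2} into each trace appearing in the Kaplan--Petrow expression. The $\delta_N$-main term of Theorem \ref{MT2} only contributes when $m^{1/2}d\equiv\pm 1\pmod{N}$, which after summing over the divisor/diamond structure in the formula isolates the coefficient of $U_0=1$ (since $U_j$ for $j\geq 1$ integrates to zero against the Sato--Tate measure, so Eichler--Selberg's main term can only see $j=0$). The resulting local Euler product is produced from the standard manipulation
\begin{equation*}
\frac{\varphi(N)\psi(NM)}{q\,|\Aut_{\F_q}(E)|}\cdot(\text{Kaplan--Petrow local factors})
\end{equation*}
by collecting contributions over primes $\ell\mid n_1$ and separating the "unramified" primes $\ell\mid(q-1,n_1)$ from the "ramified" primes $\ell\mid n_1/(q-1,n_1)$, giving exactly $v(n_1,n_2)\delta(j,0)$.

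For the error term I would balance Theorem \ref{MT2} against the trivial bound \eqref{trivial2}. In the regime where Theorem \ref{MT2}'s hypothesis $m\ll(N^6\kappa^{10/3})^{1-\eta}$ is satisfied, its error $MN^{41/22}m^{(\kappa-1)/2+1/44}\kappa^{61/66}(Nm\kappa)^\eps$ translates, after dividing by the normalizing factor $q^{(\kappa-1)/2}$ coming from the Eichler--Shimura normalization of $U_j$ and the $1/q$ in the definition of $\E_q$, into $q^{1/44}n_1^{19/22}\cdot n_1 n_2 q^{-1/2}(qn_1)^{\eps}$, uniformly in $j$ (the $\kappa^{61/66}$ becomes $O_j(1)$). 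In the complementary range $q\gg n_1^{6-\delta}$ one instead substitutes the trivial bound \eqref{trivial2}, which yields the alternative $n_1\cdot n_1 n_2 q^{-1/2}$ and thus justifies the minimum in the statement. The explicit range $q\gg n_2^{2}n_1^{41/11+\delta}$ for uniform equidistribution is then extracted by requiring this error to be dominated by $v(n_1,n_2)\sim (n_1\psi(n_1)\varphi(n_1)n_2^2)^{-1}$ and solving for $q$.

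The main obstacle, I expect, will be the second paragraph: the careful bookkeeping to show that summing the $\delta_N$-main term of Theorem \ref{MT2} (weighted by diamond operator eigenvalues and the divisor-theoretic coefficients of the Kaplan--Petrow formula) reproduces the precise Euler product $v(n_1,n_2)$, since this involves a prime-by-prime local matching between Eichler--Selberg constants and the elliptic-curve counting side, and one has to keep track of orientation issues such as the sign $(-1)^\kappa$ (harmless here as $\kappa=j+2$ and $d=\pm 1$ both contribute). Once this identification is made, the error estimate is routine from Theorem \ref{MT2} and \eqref{trivial2}.
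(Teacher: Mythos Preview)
Your proposal is correct and matches the paper's approach: the corollary is stated in the paper as ``a refinement of the error term in the main corollary of \cite{KaplanPetrow2}'', and the proof consists precisely of inserting Theorem~\ref{MT2} alongside the trivial bound~\eqref{trivial2} into the trace formula of \cite[Thm.~2]{KaplanPetrow2}. One simplification relative to your outline: the anticipated ``main obstacle'' of matching the $\delta_N$-main term of Theorem~\ref{MT2} to the Euler product $v(n_1,n_2)\delta(j,0)$ is already carried out in \cite{KaplanPetrow2} (where the same main term was extracted using \eqref{trivial2}, whose leading term agrees with that of Theorem~\ref{MT2}), so you may simply quote that identification rather than redo the prime-by-prime bookkeeping.
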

In \cite{KaplanPetrow2} Kaplan and the author showed that the equidistribution of $t_E$ for $\{E/\F_q : A \hookrightarrow E(\F_q)\}$ is uniform as soon as $q\gg n_2^2 n_1^{4+\delta}$ by applying \eqref{trivial2} to bound the trace.  In this sense, Corollary \ref{MC2} goes beyond what one can conclude using the Riemann hypothesis of Deligne alone.  All of the error terms in the theorems and corollaries found in section 2 of \cite{KaplanPetrow2} are similarly improved by applying Theorem \ref{MT2} in addition to \eqref{trivial2}.
\subsection{Outline of Proof}\label{proofoutline}
Thanks to \eqref{eq:mnstart}, the structural steps of the proof of Theorem \ref{MT2} reduce to those of Theorem \ref{MT1}.  The details of the analytic arguments differ however (see section \ref{AforMN}).  For these reasons, we only discuss the proof of Theorem \ref{MT1} in this outline.

By Atkin-Lehner theory, to estimate $\tr(T_m | S_\kappa(\Gamma_0(N),\epsilon))$ it suffices to estimate \begin{equation}\label{sum}\sum_{f \in H^\star_\kappa(N,\epsilon)}\lambda_f(m),\end{equation} where $H^\star_\kappa(N,\epsilon)$ is set of Hecke-normalized newforms of level $N$ and character $\epsilon$, and $\lambda_f(m)$ is the $m$th Hecke eigenvalue of $f$, normalized so that $|\lambda_f(n)| \leq d(n)$.  Whereas Serre and Conrey, Duke, and Farmer used the Eichler-Selberg trace formula to access the trace of $T_m$, we take a different path and use the Petersson trace formula.  

Let $\mathcal{B}_\kappa(\Gamma_0(N),\epsilon)$ be an orthonormal basis for $S_\kappa(\Gamma_0(N),\epsilon)$.  Let $g \in \mathcal{B}_\kappa(\Gamma_0(N),\epsilon)$ and write its Fourier coefficients as $\{b_g(n)\}_{n\geq 1}$. Then the Petersson formula \cite[Prop.~14.5]{IK} says that \begin{equation}\label{eq:petersson}\frac{\Gamma(\kappa-1)}{( 4 \pi \sqrt{mn})^{\kappa-1}} \sum_{f \in \mathcal{B}_\kappa(\Gamma_0(N),\epsilon)} b_f(n)\overline{b_f(m)} = \delta(m,n) + 2\pi i^{-\kappa} \sum_{\substack{c>0 \\ c \equiv 0 \mods N}} \frac{S_{\epsilon}(m,n,c)}{c} J_{\kappa-1}\left( \frac{4\pi \sqrt{mn}}{c}\right),\end{equation} where $J_{\alpha}$ is the $J$-Bessel function, $S_\epsilon(m,n,c)$ is the twisted Kloosterman sum $$S_\epsilon(m,n,c) = \sums_{d \mods c}\epsilon(d) e\left( \frac{dm + \overline{d}n}{c}\right),$$ and the $*$ indicates we run over invertible $d \mods c$.

Our goal is to apply the Petersson formula to \eqref{sum}, and so we are faced with two technical difficulties: \begin{enumerate} \item Only the newforms in $S_\kappa(\Gamma_0(N),\epsilon)$ have Fourier coefficients proportional to the Hecke eigenvalues appearing in \eqref{sum}, and \item If $f$ is a newform, the constant of proportionality between Fourier coefficients $b_f(n)$ and the Hecke eigenvalues $\lambda_f(n)$ is $\approx ||f||_{L^2}$, which is not constant across $H^\star_{\kappa}(N,\epsilon).$\end{enumerate}

We overcome (1) in Theorem \ref{thm:petersson} by developing a Petersson formula for newforms for $S_\kappa(\Gamma_0(N),\epsilon)$.  There has been much recent interest in such formulas, see for example \cite{BBDDM}, \cite{NelsonPeterssonFormula}, \cite{PetrowYoung1}, and \cite{YoungEis}.  Theorem \ref{thm:petersson} is a generalization of \cite[Prop.~4.1]{BBDDM} to nontrivial central characters, which itself is a generalization of work of Iwaniec-Luo-Sarnak \cite{ILS}, Rouymi \cite{Rouymi} and Ng \cite{NgBasis}. Peter Humphries has also shared a preprint with the author in which he independently obtains Theorem \ref{thm:petersson}, and uses it to study low-lying zeros of the $L$-functions associated to $f \in H^\star_\kappa(N,\epsilon)$.  Theorem \ref{thm:petersson} is the only place in the proof where we have used the hypothesis $(N,m)=1$, in an essential way, and so is the source of the relatively prime conditions in Theorems \ref{MT1} and \ref{MT2}.  

We deal with (2) by appealing to the special value formula $$L(1,\Ad^2 f) = \frac{\zeta^{(N)}(2) (4 \pi)^{\kappa}}{\Gamma(\kappa)} \frac{|| f||^2_{L^2}}{\Vol X_0(N)},$$ where $L(s,\Ad^2 f)$ is a certain Dirichlet series whose coefficients involve $\lambda_f(n^2)$, and which we discuss in more detail in section \ref{background}.  One may then swap the sum over $f$ and this Dirichlet series, and apply our Petersson formula for newforms (Theorem \ref{thm:petersson}).  Estimating the resulting sums directly using the Weil bound for $S_\epsilon(a,b,c)$ (see Lemma \ref{Weil}), one recovers that the trace of $T_m$ is $\ll_m (N\kappa)^{1+\eps}$ (compare with \eqref{eq:trivial}).  

To save a bit more and obtain Theorem \ref{MT1} we remove the weights $||f||^2_{L^2}$ more efficiently using a method due to Kowalski and Michel \cite[Prop.~2]{KMrankofJ0}.  Kowalski and Michel's method is based on H\"older's inequality and a large sieve inequality due to Duke and Kowalski \cite[Thm.~4]{DukeKowalski} for sub-families of automorphic forms on $\GL_3$.  There are other notable large sieve inequalities for $\GL_3$ in the literature, see e.g.~\cite[Thm.~3]{BBMlargesieve} and \cite[Thm.~1]{Vlargesieve}.  However, these two are not useful to us since we need a large sieve inequality which is efficient for the proper sub-family of $\GL_3$ forms cut out by the image of the adjoint square lift from $\GL_2$.  The inequality of Duke and Kowalski is superior to the results \cite[Thm.~3]{BBMlargesieve} and \cite[Thm.~1]{Vlargesieve} in the case of a thin subfamily and a long summation variable, which is the situation of interest to us.
\subsection{Acknowledgements}
I would like to thank Nathan Kaplan for a careful read and pointing out the Tsfasman-Vl\u adu\c t-Zink theorem to me, Corentin Perret-Gentil for some helpful discussions, and the anonymous referee for a thorough and detailed report on the first version of this paper.

\section{Preliminaries on $L$-series}\label{background}

If $L(s)$ is a meromorphic function defined in $\real(s) \gg 1$ by an infinite product over primes $p$ of local factors $L_p(s)$, then for any integer $N$ we write $$L^{(N)}(s) = \prod_{p \nmid N}L_p(s)$$ and $$L_N(s) = \prod_{p\mid N} L_p(s),$$ so that $L(s) = L_N(s)L^{(N)}(s)$ for any $N\in \N$.  To deal with the $||f||_{L^2}^2$-normalization alluded to in subsection \ref{proofoutline}, we introduce the ``naive'' adjoint square $L$-function. 
For $f \in H_\kappa^\star(N,\epsilon)$, let $$L(s,\Ad^2 f) = \frac{\zeta^{(N)}(2s)}{\zeta(s)} \sum_{n\geq 1} \frac{|\lambda_f(n)|^2}{n^s}= \prod_p L_p(s,\Ad^2 f),$$ where $\zeta(s)$ is the Riemann zeta function, and where \begin{equation}\label{Adformula} L_p(s,\Ad^2 f) = \begin{cases}  
\left(1-\frac{1}{p^{2s}}\right)^{-1} \sum_{\alpha \geq 0 } \frac{\overline{\epsilon}(p^\alpha) \lambda_f(p^{2\alpha})}{p^{\alpha s}}  & \text{ if } p \nmid N \\
\left(1- \frac{1}{p^s}\right) \left( 1- \frac{|\lambda_f(p)|^2}{p^s}\right)^{-1}  & \text{ if } p \mid N. \end{cases} \end{equation}
Warning: the $L(s,\Ad^2 f)$ is \emph{not} the true adjoint square $L$-function of $f$ as defined by functoriality (see \cite[pp.~133]{IK} and the online errata).  But if $p \nmid N$, then $L_p(s,\Ad^2 f)$ \emph{does} match the local $L$-factor at $p$ of the true adjoint square $L$-function.  Our ``naive'' adjoint square $L$ function $L(s,\Ad^2 f)$ is chosen to be the Dirichlet series for which the following Lemma is true.  
\begin{lemma}\label{Rankin} 
The series $L(s,\Ad^2 f)$ defined above is holomorphic for $\real(s) >0 $ and \begin{equation}\label{eq:L1Ad} L(1,\Ad^2 f) = \frac{\zeta^{(N)}(2) (4 \pi)^{\kappa}}{\Gamma(\kappa)} \frac{\langle f,f\rangle_{N}}{\Vol X_0(N)},\end{equation} where $$\langle f,f\rangle_N = \int_{\Gamma_0(N) \backslash \mathcal{H}} |f(z)|^2 y^{\kappa}\,\frac{dx\,dy}{y^2}$$ and $$\Vol X_0(N) = \int_{\Gamma_0(N)  \backslash \mathcal{H} } \frac{dx\,dy}{y^2} = \frac{\pi}{3} \psi(N).$$  \end{lemma}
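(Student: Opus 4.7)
The plan is to identify $L(s,\Ad^2 f)$ (up to known Euler factors) with a Rankin--Selberg convolution and then unfold against an Eisenstein series in the standard way. Concretely, for $f \in H^\star_\kappa(N,\epsilon)$ with $\lambda_f(1)=1$ so that the Fourier coefficients satisfy $a_f(n) = \lambda_f(n) n^{(\kappa-1)/2}$, I would consider
\[
I(s) := \int_{\Gamma_0(N)\backslash \mathcal{H}} |f(z)|^2 y^\kappa E_\infty(z,s)\, \frac{dx\,dy}{y^2},
\]
where $E_\infty(z,s) = \sum_{\gamma \in \Gamma_\infty\backslash \Gamma_0(N)} \Im(\gamma z)^s$ is the Eisenstein series at the cusp $\infty$. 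For $\real(s) \gg 1$ one unfolds $I(s)$ to an integral over $\Gamma_\infty\backslash \mathcal{H}$, computes the $x$-integral by orthogonality of the characters $e(nx)$, and then the $y$-integral by the Gamma-function identity, yielding
\[
I(s) = \frac{\Gamma(\kappa+s-1)}{(4\pi)^{\kappa+s-1}} \sum_{n\geq 1} \frac{|\lambda_f(n)|^2}{n^s}.
\]

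Next I would invoke the classical meromorphic continuation and functional equation of $E_\infty(z,s)$, whose only pole in $\real(s)>0$ is a simple pole at $s=1$ with constant residue $1/\Vol X_0(N)$. This yields meromorphic continuation of the Dirichlet series $D(s) := \sum_n |\lambda_f(n)|^2 n^{-s}$ to $\real(s)>0$, its sole singularity being a simple pole at $s=1$ (since $\Gamma(\kappa+s-1)$ is nonvanishing and holomorphic there for $\kappa\geq 2$). Taking residues on both sides gives
\[
\res_{s=1} D(s) = \frac{(4\pi)^\kappa}{\Gamma(\kappa)} \cdot \frac{\langle f,f\rangle_N}{\Vol X_0(N)}.
\]

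The remaining step is the Euler-factor bookkeeping that relates $D(s)$ to $L(s,\Ad^2 f)$. For $p\nmid N$ I would use the Hecke relation $\lambda_f(p^\alpha)^2 = \sum_{i=0}^{\alpha}\epsilon(p^i)\lambda_f(p^{2\alpha-2i})$ together with the identity $\overline{\lambda_f(p^\alpha)} = \overline{\epsilon}(p^\alpha)\lambda_f(p^\alpha)$ to write $|\lambda_f(p^\alpha)|^2 = \sum_{j=0}^{\alpha} \overline{\epsilon}(p^j) \lambda_f(p^{2j})$; swapping the order of summation gives
\[
\sum_{\alpha\geq 0} \frac{|\lambda_f(p^\alpha)|^2}{p^{\alpha s}} = \frac{1}{1-p^{-s}} \sum_{j\geq 0} \frac{\overline{\epsilon}(p^j) \lambda_f(p^{2j})}{p^{js}} = \frac{1-p^{-2s}}{1-p^{-s}}\, L_p(s,\Ad^2 f).
\]
For $p \mid N$ one has $\lambda_f(p^\alpha) = \lambda_f(p)^\alpha$ by newform theory, which directly gives $\sum_{\alpha}|\lambda_f(p^\alpha)|^2 p^{-\alpha s} = (1-|\lambda_f(p)|^2 p^{-s})^{-1} = L_p(s,\Ad^2 f)/(1-p^{-s})$. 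Multiplying over all $p$ yields $D(s) = \zeta(s)\, L(s,\Ad^2 f)/\zeta^{(N)}(2s)$, i.e.~the identity of the lemma's defining Dirichlet series.

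Combining these ingredients, $L(s,\Ad^2 f) = \zeta^{(N)}(2s) D(s)/\zeta(s)$ is the product of $\zeta^{(N)}(2s)$ (holomorphic and nonzero in $\real(s)>1/2$, and its pole at $s=1/2$ canceled by vanishing contributed by the other factor via the functional equation) with $D(s)/\zeta(s)$, which is holomorphic on $\real(s)>0$ because the simple pole of $D(s)$ at $s=1$ is canceled by the simple pole of $\zeta(s)$. Finally, evaluating at $s=1$ and using $\res_{s=1}\zeta(s)=1$ together with the residue formula for $D(s)$ obtained above yields
\[
L(1,\Ad^2 f) = \zeta^{(N)}(2) \cdot \res_{s=1} D(s) = \frac{\zeta^{(N)}(2) (4\pi)^\kappa}{\Gamma(\kappa)} \cdot \frac{\langle f,f\rangle_N}{\Vol X_0(N)}.
\]
The main obstacle, if any, is verifying the Euler factor calculation at ramified primes $p\mid N$ and the holomorphy of $\zeta^{(N)}(2s)/\zeta(s)$ on the strip $0<\real(s)\leq 1/2$; both can be handled by bookkeeping but should be stated explicitly.
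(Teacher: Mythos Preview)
Your treatment of the special value formula is correct and coincides with the paper's argument: both unfold the Rankin--Selberg integral $\int_{\Gamma_0(N)\backslash\mathcal H}|f|^2 y^\kappa E_\infty(z,s)\,d\mu$ to obtain $\frac{\Gamma(s+\kappa-1)}{(4\pi)^{s+\kappa-1}}\sum_n|\lambda_f(n)|^2 n^{-s}$ and then take the residue at $s=1$ using $\res_{s=1}E_\infty(z,s)=\Vol X_0(N)^{-1}$. Your Euler-product bookkeeping relating $D(s)$ to $L(s,\Ad^2 f)$ is also fine.

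However, your argument for holomorphy on $\real(s)>0$ has a genuine gap. You write $L(s,\Ad^2 f)=\zeta^{(N)}(2s)\,D(s)/\zeta(s)$ and claim $D(s)/\zeta(s)$ is holomorphic on $\real(s)>0$ because the simple poles at $s=1$ cancel. But $\zeta(s)$ has infinitely many zeros in the critical strip $0<\real(s)<1$, and each such zero becomes a pole of $D(s)/\zeta(s)$ unless $D(s)$ happens to vanish there as well. Nothing in the Eisenstein-series continuation forces $D(s)$ to vanish at the zeta zeros; establishing this is precisely the content of the holomorphy of the (adjoint) symmetric-square $L$-function, due to Shimura in the holomorphic case and Gelbart--Jacquet in general. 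This is not ``bookkeeping'': it is a nontrivial input that your classical unfolding cannot supply on its own. The paper accordingly invokes Gelbart--Jacquet to conclude that $L^{(N)}(s,\Ad^2 f)$ (which matches the automorphic adjoint square away from $N$) is holomorphic for $\real(s)>0$; the local factors at $p\mid N$ are visibly holomorphic there since $|\lambda_f(p)|^2\le 1$. You should replace your holomorphy paragraph with a citation to Shimura or Gelbart--Jacquet.
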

\begin{proof} For the first statement, let $\pi$ denote the irreducible admissible cuspidal automorphic representation of $\GL_2$ generated by $f$, and denote by $L(s, \Ad^2 \pi)$ the $L$-function of its adjoint square lift.  We have by Gelbart and Jacquet \cite{GelbartJacquet} that $L(s,\Ad^2 \pi)$ is an entire function of $s$.  Therefore, the prime-to-$N$ part of the naive $L$-function $L^{(N)}(s,\Ad^2 f)$ is holomorphic for $\real(s)>0$.  

For the second statement, take the standard non-holomorphic Eisenstein series for $\Gamma_0(N)$ at the cusp $\infty$ given by $$E(z,s) = \sum_{\gamma \in \Gamma_\infty \backslash \Gamma_0(N)} \Imag(\gamma z)^s.$$ Then we have by the classical Rankin-Selberg unfolding argument $$ \int_{\Gamma_0(N) \backslash \mathcal{H}} |f(z)|^2 E(z,s) y^\kappa \, \frac{dx\,dy}{y^2} = \frac{\Gamma(s+ \kappa -1)}{(4 \pi )^{s+\kappa -1}} \sum_{n\geq 1} \frac{|\lambda_f(n)|^2}{n^s}.$$ We deduce the lemma by taking residues on both sides and recalling \cite[Thm.~13.2]{ClassIw} that $$\res_{s=1} E(z,s) = \Vol X_0(N)^{-1}.$$ 
\end{proof}  
Let $\rho_f(n)$ be the Dirichlet series coefficients of $L^{(N)}(s,\Ad^2 f)$.  Explicitly, \begin{equation}\label{eq:rho}\rho_f(n) = \begin{cases} \sum_{n=m^2 \ell} \overline{\epsilon}(\ell) \lambda_f(\ell^2) & \text{ if } (n,N)=1 \\
0 & \text{ if } (n,N)>1. \end{cases}\end{equation} Inverting, we also have \begin{equation}\label{eq:rhoinverse}\overline{\epsilon}(n) \lambda_f(n^2) = \sum_{m^2 \ell=n} \mu(m) \rho_f(\ell).\end{equation} For future reference, we write the partial sums of $L^{(N)}(1,\Ad^2 f)$ compactly as \begin{equation}\label{omegafx}\omega_f(x) = \sum_{n\leq x} \frac{\rho_f(n)}{n}.\end{equation}

By contrast, when $p \mid N$ we have that $L_p(s,\Ad^2 f)$ is constant along $f \in H_\kappa^\star(N,\epsilon)$ by the following Lemma. \begin{lemma}[\cite{Ogg} Thms 2, 3]\label{Ogg}
Let $p \mid N$ be a prime, and $\epsilon$ a Dirichlet character mod $N$.  Write $$a_{N,\epsilon}(p) = \begin{cases} 1 & \text{ if } \epsilon \text{ is \emph{not} a character mod } N/p \\ \frac{1}{p} & \text{ if } \epsilon \text{ is a character mod } N/p \text{ and } p^2 \nmid N \\ 0 & \text{ if } \epsilon \text{ is a character mod } N/p \text{ and } p^2 \mid N.\end{cases}$$ Then we have $|\lambda_f(p)|^2=a_{N,\epsilon}(p).$
\end{lemma}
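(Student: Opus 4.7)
The plan is to reduce the determination of $|\lambda_f(p)|^2$ to a case analysis based on the local structure of $f$ at $p$. The local component $\pi_p$ of the automorphic representation $\pi_f$ generated by $f$ is an irreducible admissible representation of $\GL_2(\Q_p)$ whose conductor equals $p^{v_p(N)}$ and whose central character has $p$-part of conductor $p^{v_p(\cond(\epsilon))}$. In the Hecke normalization, the eigenvalue $\lambda_f(p)$ is the unique root $\alpha_p$ of the local $L$-factor $L_p(s,\pi_f) = (1 - \alpha_p p^{-s})^{-1}$ when that factor has degree one in $p^{-s}$, and $\lambda_f(p) = 0$ when the local $L$-factor is trivial.

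I would then split into three cases. If $\epsilon$ is not a character modulo $N/p$, then $v_p(\cond(\epsilon)) = v_p(N)$, so the representation and its central character share the same $p$-conductor. This rules out supercuspidal $\pi_p$, whose central character always has strictly smaller conductor, and likewise rules out ramified twists of Steinberg; it forces $\pi_p$ to be a principal series $\pi(\chi_0,\chi)$ with $\chi_0$ unramified and $\chi$ ramified of conductor $p^{v_p(N)}$. Reading off $L_p(s,\pi_f) = (1-\chi_0(p)p^{-s})^{-1}$ and using unitarity, one concludes $|\lambda_f(p)|^2 = |\chi_0(p)|^2 = 1$. If $\epsilon$ is a character modulo $N/p$ with $p \| N$, then $\pi_p$ has conductor $p$ and unramified central character, so $\pi_p$ is an unramified twist of the Steinberg representation; the local $L$-factor takes the form $(1 - \mu(p)p^{-s-1/2})^{-1}$ with $|\mu(p)|=1$, hence $|\lambda_f(p)|^2 = 1/p$. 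Finally, if $\epsilon$ is a character modulo $N/p$ with $p^2 \mid N$, then $\pi_p$ has conductor at least $p^2$ with strictly less ramified central character; in every possibility (supercuspidal; twist of Steinberg by a ramified character; principal series with both inducing characters ramified) the local $L$-factor is trivial, so $\lambda_f(p) = 0$.

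The main technical obstacle is the structural input in the first case: one must know that supercuspidal representations of $\GL_2(\Q_p)$ have central character of strictly smaller $p$-conductor than the representation itself. For odd $p$ this is clean (such representations have even conductor $p^{2c}$ with central character of conductor at most $p^c$), but the dyadic case $p=2$ requires separate verification. An entirely classical alternative, which is the path Ogg takes, proceeds via the Atkin--Lehner operators and the newform trace map from level $N$ to $N/p$, avoiding representation-theoretic input at the cost of more intricate bookkeeping of the $\epsilon$-twists appearing in the Atkin--Lehner commutation relations and in the oldform/newform dichotomy.
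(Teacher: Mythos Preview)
The paper does not supply its own proof of this lemma; it is stated as a direct quotation of Ogg's Theorems~2 and~3, with no argument given. So there is no in-paper proof to compare against beyond the citation.

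Your representation-theoretic sketch is correct and is the standard modern route. The case analysis is accurate: in Case~2 the only conductor-$p$ representation with unramified central character is an unramified twist of Steinberg, giving $|\lambda_f(p)|^2=1/p$; in Case~3 every possibility (principal series with both characters ramified, ramified twist of Steinberg, supercuspidal) has trivial local $L$-factor, giving $\lambda_f(p)=0$; and in Case~1 the principal-series reduction you describe is forced once one knows that neither special nor supercuspidal representations can have central-character conductor equal to their own conductor. You rightly isolate this last fact about supercuspidals as the only nontrivial structural input. It is true for all~$p$, including $p=2$: under local Langlands the corresponding $2$-dimensional irreducible Weil representation $\sigma$ has $a(\sigma)\geq 2$ since $\sigma^{I}=0$, and the explicit classification (dihedral for odd~$p$; dihedral plus the exceptional types for $p=2$, as in Tunnell's and Henniart's work) confirms $a(\det\sigma)<a(\sigma)$ in every case. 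Ogg's original 1969 argument, which you also describe accurately, is entirely classical and predates this language: he works with the operators $T_p$ and $W_p$ directly on spaces of cusp forms, using their commutation relations and the trace map to level $N/p$ to separate the three cases. Your approach trades that operator bookkeeping for the local classification; it is conceptually more transparent at the cost of importing heavier machinery.
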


\section{Structural Steps}
We study the operator $T'_m = T_m/m^{\frac{\kappa-1}{2}}$ on $S_\kappa(\Gamma_0(N),\epsilon)$ so that each eigenvalue $\lambda_f(m)$ of the $T'_m$ operator is normalized by Deligne's theorem to have $|\lambda_f(m)|\leq d(m)$.  We write $H^\star_\kappa(N,\epsilon)$ for the set of Hecke-normalized newforms in $S_\kappa(N,\epsilon)$ in the sense of Atkin-Lehner theory \cite{AtkinLehner,LiPrimitive}. 
Also by Atkin-Lehner theory we have when $(m,N)=1$ that \begin{equation}\label{eq:nestart}\tr(T_m' | S_\kappa(\Gamma_0(N),\epsilon))
=  \sum_{LM = N } d(L)\sum_{f \in H_\kappa^\star(M, \epsilon)} \lambda_f(m),\end{equation} where we consider the interior sum to be empty if $\epsilon$ is not a character mod $M$.  
Thanks to \eqref{eq:mnstart}, we can reduce the structural steps for traces on $S_\kappa( \Gamma(M,N))$ to the case of $S_\kappa(\Gamma_0(N),\epsilon)$.

 Recall the notation from section \ref{proofoutline} and write $c_\kappa = \Gamma(\kappa-1)/(4 \pi)^{\kappa-1}$. Let $$\Delta_{\kappa,N,\epsilon}(m,n) =  \frac{c_\kappa}{( \sqrt{mn})^{\kappa-1}} \sum_{f \in \mathcal{B}_\kappa(\Gamma_0(N),\epsilon)} b_f(n)\overline{b_f(m)} ,$$ so that the Petersson formula \eqref{eq:petersson} is \begin{equation}\label{petersson2}\Delta_{\kappa,N,\epsilon}(m,n) = \delta(m,n) + 2\pi i^{-\kappa} \sum_{\substack{c>0 \\ c \equiv 0 \mods N}} \frac{S_{\epsilon}(m,n,c)}{c} J_{\kappa-1}\left( \frac{4\pi \sqrt{mn}}{c}\right).\end{equation}  The following theorem is our main tool for computing sums over the set of newforms $H_\kappa^\star(N,\epsilon)$. \begin{theorem}\label{thm:petersson}
If $(mn,N)=1$ then we have $$c_\kappa \sum_{f \in H_\kappa^\star(N,\epsilon)} \frac{\overline{\lambda_f(m)}\lambda_f(n)}{\langle f,f \rangle_N} = \sum_{LM=N} \mu(L) R(M,L,\epsilon) \sum_{\substack{\ell \mid L^\infty \\ (\ell,M)=1}} \frac{\overline{\epsilon}(\ell)}{\ell} \Delta_{\kappa, M,\epsilon}(m,n\ell^2),$$ where $$R(M,L,\epsilon):= \frac{1}{L} \prod_{\substack{p^2 \mid L \\ p \nmid M}} \left(1- \frac{1}{p^2}\right)^{-1} \prod_{p \mid (M,L)} \left(1-\frac{a_{M,\epsilon}(p)}{p}\right)^{-1},$$ and $a_{M,\epsilon}(p)$ was defined in Lemma \ref{Ogg}. 
\end{theorem}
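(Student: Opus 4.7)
\textbf{Proof plan for Theorem \ref{thm:petersson}.}

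The plan is to build an explicit orthogonal decomposition of $S_\kappa(\Gamma_0(N),\epsilon)$ by Atkin--Lehner theory, substitute an orthonormal basis of the resulting decomposition into the kernel $\Delta_{\kappa,N,\epsilon}(m,n)$, and then Möbius invert to isolate the contribution of newforms at the full level $N$. This is the same philosophy as in \cite{ILS,Rouymi,NgBasis,BBDDM}; the new issue is bookkeeping the local factors at primes $p\mid (M,L)$ where $\epsilon$ interacts with the old/new decomposition.

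First, I would fix $M\mid N$ with $\epsilon$ a character mod $M$, set $L=N/M$, and for each $f\in H_\kappa^\star(M,\epsilon)$ consider the raising maps $\iota_d : f\mapsto f_d$ with $f_d(z):= d^{\kappa/2} f(dz)$ for $d\mid L$. The space $V_f\subset S_\kappa(\Gamma_0(N),\epsilon)$ spanned by $\{f_d\}_{d\mid L}$ has dimension $d(L)$, and Atkin--Lehner theory gives the orthogonal decomposition
\[
S_\kappa(\Gamma_0(N),\epsilon)=\bigoplus_{M\mid N}\bigoplus_{f\in H_\kappa^\star(M,\epsilon)} V_f,
\]
where the interior sum is empty unless $\epsilon$ is a character mod $M$. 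Since the decomposition is orthogonal, the inner sum in the Petersson formula splits as
\[
\sum_{g\in\mathcal{B}_\kappa(\Gamma_0(N),\epsilon)}\frac{b_g(n)\overline{b_g(m)}}{\langle g,g\rangle_N} = \sum_{M\mid N}\sum_{f\in H_\kappa^\star(M,\epsilon)}\sum_{d,d'\mid L} \alpha_f(d,d')\, b_{f_d}(n)\overline{b_{f_{d'}}(m)},
\]
where $\alpha_f(d,d')$ is the $(d,d')$ entry of the inverse of the Gram matrix $G_f := \bigl(\langle f_d,f_{d'}\rangle_N\bigr)_{d,d'\mid L}$.

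The main technical step is computing $G_f^{-1}$. Using Rankin--Selberg unfolding and the multiplicativity of local factors, one finds that $\langle f_d,f_{d'}\rangle_N$ factors as $\langle f,f\rangle_N$ times a product over primes $p\mid L$ depending only on $(v_p(d),v_p(d'),v_p(M))$ and on $\lambda_f(p)$ (through $a_{M,\epsilon}(p)$ when $p\mid M$). Thus $G_f$ is a tensor product of local matrices indexed by $p\mid L$, and each local inverse may be computed by hand: for $p\nmid M$ this is the same matrix as in \cite{ILS}, while for $p\mid (M,L)$ one obtains a rank-one matrix with diagonal $1+|\lambda_f(p)|^2/p^{\cdots}$ whose inverse contributes the factor $(1-a_{M,\epsilon}(p)/p)^{-1}$. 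Collecting the local computations produces $R(M,L,\epsilon)/L$ times a kernel involving the Hecke eigenvalues $\lambda_f$ evaluated at $m$, $n$ and at powers of primes dividing $L$. This is the main obstacle, and I expect the bookkeeping at primes $p\mid (M,L)$ together with the interaction of $\bar\epsilon(p)$ with $\lambda_f$ to be the most delicate part.

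Next, using $b_{f_d}(n) = d^{\kappa/2}\lambda_f(n/d)(n/d)^{(\kappa-1)/2}$ when $d\mid n$ and $(n,M)=1$, and simplifying the resulting double sum over $d,d'\mid L$ via the multiplicativity of $\lambda_f$ and Hecke relations (here the hypothesis $(mn,N)=1$ enters critically, so that $\lambda_f(m)$ and $\lambda_f(n\ell^2)$ behave multiplicatively for $\ell\mid L^\infty$), the contribution of $f\in H_\kappa^\star(M,\epsilon)$ collapses to
\[
\frac{\overline{\lambda_f(m)}\lambda_f(n)}{\langle f,f\rangle_N}\cdot R(M,L,\epsilon)\cdot\sum_{\substack{\ell\mid L^\infty\\(\ell,M)=1}}\frac{\bar\epsilon(\ell)}{\ell}\cdot(\text{local kernel at }\ell).
\]
Summing over $f\in H_\kappa^\star(M,\epsilon)$ and then over $M\mid N$, and finally applying Möbius inversion on the divisor relation $M\mid N$ to extract the level-$N$ newform piece, I would obtain exactly the right-hand side of the claimed identity, after recognizing the local sum $\sum_{\ell\mid L^\infty,\,(\ell,M)=1}\bar\epsilon(\ell)\ell^{-1}\Delta_{\kappa,M,\epsilon}(m,n\ell^2)$ as the image under the Petersson formula at level $M$ of the combinatorial kernel produced in Step 2. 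The Möbius factor $\mu(L)$ arises in the standard way from the inclusion--exclusion that extracts newforms from the total basis at each level.
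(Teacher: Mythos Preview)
Your overall architecture is correct and matches the paper's: orthogonally decompose $S_\kappa(\Gamma_0(N),\epsilon)$ via Atkin--Lehner, compute the Petersson kernel on an explicit basis of each oldclass, obtain a ``forward'' identity expressing $\Delta_{\kappa,N,\epsilon}(m,n)$ as a sum over $M\mid N$ of newform sums $\Delta^\star_{\kappa,M,\epsilon}$, and then invert. Two points of divergence and one genuine gap are worth flagging.

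\textbf{Orthonormal basis vs.\ Gram inverse.} The paper does not invert the Gram matrix directly; it quotes the already-orthonormalized Schulze-Pillot--Yenirce basis $f^{(g)}=\sum_{d\mid g}\xi_g(d)d^{\kappa/2}f(dz)$ for each oldclass, with explicit $\xi_g(d)$. This is of course equivalent to computing $G_f^{-1}$, but it packages the local linear algebra cleanly and avoids the need to redo the Rankin--Selberg inner product calculation. Incidentally, your description of the local block at $p\mid(M,L)$ as ``rank-one'' is not right: the local Gram matrix at $p^\alpha\|L$ is $(\alpha+1)\times(\alpha+1)$ and full rank; what is special when $p\mid M$ is only that $|\lambda_f(p)|^2=a_{M,\epsilon}(p)$ is constant across $f$ (Lemma~\ref{Ogg}), which is how the factor $(1-a_{M,\epsilon}(p)/p)^{-1}$ eventually appears.

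\textbf{The inversion step is not plain M\"obius.} After the basis computation, the paper arrives at
\[
\Delta_{\kappa,N,\epsilon}(m,n)=\sum_{LM=N}R(M,L,\epsilon)\sum_{\substack{\ell\mid L^\infty\\(\ell,M)=1}}\frac{\overline{\epsilon}(\ell)}{\ell}\,\Delta^\star_{\kappa,M,\epsilon}(m,n\ell^2),
\]
which is the analogue of your ``collapsed'' expression. Inverting this to solve for $\Delta^\star_{\kappa,N,\epsilon}$ is \emph{not} a direct M\"obius inversion in $M\mid N$, because the weights $R(M,L,\epsilon)$ and the $\ell$-sums depend on the pair $(M,L)$ in a non-multiplicative-looking way. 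The paper isolates precisely the algebraic identity that makes the inversion go through: for $N=LM$ and $M=WQ$ one has
\[
R(M,L,\epsilon)\,R(W,Q,\epsilon)\,\delta_{\cond(\epsilon)\mid W}=R(W,LQ,\epsilon)\,\delta_{\cond(\epsilon)\mid W}
\]
(Lemma~\ref{lem:inversionhelper}, proved prime-by-prime via Lemma~\ref{lem:R}). With this, substituting the forward identity into $\sum_{LM=N}\mu(L)R(M,L,\epsilon)\sum_{\ell}\cdots\Delta_{\kappa,M,\epsilon}$ telescopes to $\Delta^\star_{\kappa,N,\epsilon}$. Your proposal says ``applying M\"obius inversion on the divisor relation'' and ``recognizing the local sum\dots as the image under the Petersson formula at level $M$'', but this glosses over exactly the nontrivial point: you need the cocycle-type identity for $R$ above, and you should state and verify it (a prime-power case check) rather than assert that standard M\"obius does the job.
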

\begin{proof} See section \ref{petproof}.\end{proof}

Theorem \ref{thm:petersson} does not directly apply to \eqref{eq:nestart} because of the normalization by $\langle f, f\rangle_N$. 

We present a technique for removing the weights $\langle f, f\rangle_N$, which is a slight generalization of Kowalski and Michel \cite[\S 3]{KMrankofJ0}. The idea for removing such weights first appeared in a paper of Murty \cite{MurtyrankofJ0}. 
Let $\alpha = (\alpha_f)$ be a sequence of complex numbers indexed by $$f \in \bigcup_{N\geq 1} \bigcup_{\epsilon \mods N} H^\star_\kappa(N,\epsilon).$$  Define the natural averaging operator $$A[\alpha]=A_{N,\epsilon}[\alpha] = \sum_{f \in H^\star_\kappa(N,\epsilon)} \alpha_f.$$ Let $$\omega_f = c_\kappa \frac{L_N(1,\Ad^2 f)}{\langle f,f\rangle_N}.$$ Then we define the \emph{harmonic} averaging operator $$A^h[\alpha]=A^h_{N,\epsilon}[\alpha] =   \sum_{f \in H^\star_\kappa(N,\epsilon)} \omega_f \alpha_f.$$ 

The following proposition is a minor generalization of Proposition 2 of \cite{KMrankofJ0}. It allows us to pass from natural averages of newforms to harmonic averages of newforms. 

\begin{proposition}\label{prop:Holder}
Let $\alpha = (\alpha_f)$ be a sequence of complex numbers indexed by $f \in H_\kappa^\star(N,\epsilon)$ running over all $N$ and all $\epsilon$.  Suppose that for all $\eps>0$ \begin{equation}\label{eq:HkNeHypothesis1} A^h[|\alpha_f| ] \ll_\eps (N\kappa)^\eps \end{equation} and \begin{equation}\label{eq:HkNeHypothesis2}  \max_{f \in H_\kappa^\star(N,\epsilon)} \left| \omega_f \alpha_f \right| \ll (N\kappa)^{-\delta + \eps}\end{equation} for some absolute $\delta>0$.  For any integer $r\geq 1$ write $x=(N\kappa)^{\frac{10}{r}}$. Then we have $$A[\alpha_f] = \frac{\kappa-1}{4\pi} \frac{\Vol X_0(N)}{\zeta^{(N)}(2)} \left( A^h[ \omega_f(x) \alpha_f] + O_{\eps,r}(x^{-\frac{\delta}{20} +\eps} + (N\kappa)^{-1})\right).$$
\end{proposition}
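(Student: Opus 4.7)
I would follow the strategy of Kowalski and Michel \cite{KMrankofJ0}: invert the harmonic weight using Lemma \ref{Rankin}, truncate the resulting Dirichlet series for $L^{(N)}(1,\Ad^2 f)$, and bound the tail via H\"older's inequality and the Duke--Kowalski large sieve for $\GL_3$.

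First, combining the definition of $\omega_f$ with Lemma \ref{Rankin} yields
$$\frac{1}{\omega_f} = \frac{\kappa - 1}{4\pi} \cdot \frac{\Vol X_0(N)}{\zeta^{(N)}(2)} \cdot L^{(N)}(1, \Ad^2 f),$$
so writing $\alpha_f = (\omega_f \alpha_f) \cdot \omega_f^{-1}$ and summing over $f \in H_\kappa^\star(N,\epsilon)$ gives the \emph{exact} identity
$$A[\alpha_f] = \frac{\kappa - 1}{4\pi} \cdot \frac{\Vol X_0(N)}{\zeta^{(N)}(2)} \cdot A^h\!\big[L^{(N)}(1, \Ad^2 f)\,\alpha_f\big].$$
I would then split $L^{(N)}(1, \Ad^2 f) = \omega_f(x) + E_f(x)$ with $E_f(x) = \sum_{n > x} \rho_f(n)/n$, and decompose $E_f(x)$ smoothly as $\sum_{Y \gg x} S_f(Y)$ with $S_f(Y) = \sum_n \frac{\rho_f(n)}{n} V(n/Y)$ for a fixed smooth bump $V$. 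A Mellin-inversion and contour shift, using that $L(s,\Ad^2 \pi)$ is entire (Gelbart--Jacquet) and of polynomial growth on vertical lines, truncates the outer sum to $x \leq Y \leq (N\kappa)^C$, with residual contributions absorbed into the $(N\kappa)^{-1}$ error.

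Setting $\beta_f = \omega_f \alpha_f$, it remains to control $\sum_f \beta_f S_f(Y)$ uniformly for $x \leq Y \leq (N\kappa)^C$. I would write $|\beta_f| = |\beta_f|^{1-2/r}|\beta_f|^{2/r}$, pull out the first factor by hypothesis \eqref{eq:HkNeHypothesis2}, and apply H\"older with exponents $r/(r-2)$ and $r/2$:
$$\sum_f |\beta_f S_f(Y)| \leq \Big(\max_f |\beta_f|\Big)^{1 - 2/r} \Big(\sum_f |\beta_f|\Big)^{(r-2)/r} \Big(\sum_f |S_f(Y)|^{r/2}\Big)^{2/r}.$$
The first two factors contribute $(N\kappa)^{-\delta(1-2/r)+\eps}$ by \eqref{eq:HkNeHypothesis1} and \eqref{eq:HkNeHypothesis2}. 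For the $r/2$-th moment I would apply the Duke--Kowalski large sieve inequality \cite[Thm.~4]{DukeKowalski} to the $\GL_3$ subfamily cut out by adjoint square lifts from $H_\kappa^\star(N,\epsilon)$: since the analytic conductor of $\Ad^2 f$ is polynomial in $N\kappa$ and the family has size $\asymp N\kappa$, this yields a power saving in $Y$ in the regime $Y \leq (N\kappa)^C$. Summing over dyadic $Y \geq x$ and calibrating $x = (N\kappa)^{10/r}$ then produces the advertised saving $x^{-\delta/20 + \eps}$.

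The main obstacle is the large sieve step: one must reconcile the cubic dependence of the $\GL_3$ conductor of $\Ad^2 f$ on $N\kappa$ with the dyadic length $Y$, pass from the natural second-moment form of the Duke--Kowalski inequality to the $r/2$-th moment (either by subdivision combined with Cauchy--Schwarz or by direct moment interpolation), and track constants so as to extract exactly the exponent $\delta/20$. The factor $10$ in the threshold $x = (N\kappa)^{10/r}$ is essentially a numerical trace of this crossover between the conductor exponent and the moment exponent $r$.
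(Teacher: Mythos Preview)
Your overall strategy matches the paper's: convert $A[\alpha_f]$ to $A^h[L^{(N)}(1,\Ad^2 f)\,\alpha_f]$ via Lemma~\ref{Rankin}, truncate the Dirichlet series (the paper does this with a sharp cutoff at $y=(N\kappa)^{3+\eps}$ via Perron's formula, Lemma~\ref{AFE}, in place of your smooth dyadic decomposition), and control the tail by H\"older combined with the Duke--Kowalski large sieve.

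The H\"older step, however, is miscalibrated. First, your displayed three-factor inequality is not a valid application of H\"older: pulling out $(\max_f|\beta_f|)^{1-2/r}$ and then applying H\"older with exponents $r/(r-2)$ and $r/2$ to $\sum_f |\beta_f|^{2/r}|S_f(Y)|$ yields the factor $(\sum_f |\beta_f|^{2/(r-2)})^{(r-2)/r}$, not $(\sum_f|\beta_f|)^{(r-2)/r}$. Second, even granting the inequality, the saving $(N\kappa)^{-\delta(1-2/r)}$ you claim equals $x^{-\delta(r-2)/10}$ when $x=(N\kappa)^{10/r}$, which is $x^{-\delta/20}$ only for the non-integral value $r=5/2$. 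Third, the large sieve does not give ``a power saving in $Y$'': it bounds the tail moment by $(N\kappa)^\eps$, and the entire saving must come from the $\max_f|\beta_f|$ factor. The paper instead applies H\"older with exponents $2r$ and $s=2r/(2r-1)$: one bounds $A[\omega_f(x,y)^{2r}]\ll_{r,\eps}(N\kappa)^\eps$ (Lemma~\ref{lem3}, valid precisely when $x^r\geq(N\kappa)^{10}$, which explains the choice $x=(N\kappa)^{10/r}$), while the dual factor is controlled by $(\max_f|\omega_f\alpha_f|)^{1/(2r)}A^h[|\alpha_f|]^{1/s}\ll(N\kappa)^{-\delta/(2r)+\eps}=x^{-\delta/20+\eps}$. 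Taking the even exponent $2r$ also sidesteps the moment-interpolation obstacle you flag, since one can apply the second-moment large sieve of Proposition~\ref{prop:largesieve} directly to the Dirichlet polynomial $\omega_f(x,y)^r$.
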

\begin{proof} See section \ref{holderproof}.\end{proof}

One of the main ingredients in the proof of Proposition \ref{prop:Holder} is a large sieve inequality for the Dirichlet series coefficients of the automorphic adjoint square $L$-funciton $L(s,\Ad^2 \pi)$, see Proposition \ref{prop:largesieve}, which is a quotation of \cite[Cor 6]{DukeKowalski}. This inequality is only valid when the length of summation $X$ satisfies $X \gg (N \kappa)^8$, which is far from the expected truth. Nonetheless, as of now it is the best available such inequality in the range of parameters of interest to us. The exponent $-\delta/20$ in Proposition \ref{prop:Holder} is optimized given the exponent $8$ above, and any improvement over the result of Duke and Kowalski would lead to a corresponding improvement to the value $20=2(8+2)$.

We apply Proposition \ref{prop:Holder} with $\alpha_f = \overline{\lambda_f}(m)$ to equation \eqref{eq:nestart} to get \begin{multline}\label{eq:ne3}\overline{\tr(T'_m | S_\kappa(\Gamma_0(N),\epsilon))}=  \sum_{LM = N } d(L)\frac{\kappa-1}{4\pi} \frac{\Vol X_0(M)}{\zeta^{(M)}(2)} A_{M,\epsilon}^h[\omega_f(x)\overline{\lambda_f(m)}] + O \left(\kappa N^{1+\eps}x^{-\delta /20+\eps} + N^\eps\right) \\ =  \frac{\kappa-1}{12} \sum_{LM = N } \frac{d(L)\psi(M)}{\zeta^{(M)}(2)} \sum_{\substack{n \leq x \\ (n,M)=1}} \frac{1}{n} \sum_{n=k^2 \ell} \overline{\epsilon}(\ell) A_{M,\epsilon}^h[\overline{\lambda_f(m)} \lambda_f(\ell^2)] \\ + O \left(\kappa N^{1+\eps}x^{-\delta /20+\eps} + N^\eps\right) .\end{multline}
  
We are now ready to apply Theorem \ref{thm:petersson}.  We deduce a version of the newform formula for the harmonic averages $A^h[\overline{\lambda_f(m)}\lambda_f(n)]$ appearing in \eqref{eq:ne3}.
\begin{lemma} Let $\cond_p(\epsilon)$ denote the exponent of the $p$-part of $\cond(\epsilon)$.  If $(mn,N)=1$ then we have \begin{equation}\label{eq:newforms}\begin{split} A_{N,\epsilon}^h[\overline{\lambda_f(m)}\lambda_f(n)] =  \frac{1}{\psi(N)}\sum_{LM=N} \mu(L) M F(M,\epsilon) \prod_{p^2 \mid M} \left(1-\frac{1}{p^2}\right) \sum_{\substack{\ell \mid L^\infty \\ (\ell,M)=1}} \frac{\overline{\epsilon}(\ell)}{\ell} \Delta_{\kappa, M,\epsilon}(m,n\ell^2),\end{split}\end{equation} where $$F(M,\epsilon) = \prod_{\substack{p || M \\ \cond_p (\epsilon )= 1}} \left(1+\frac{1}{p} \right) \prod_{\substack{p^\alpha || M \\ \alpha \geq 2 \\  \cond_p (\epsilon) = \alpha}} \left(1-\frac{1}{p} \right)^{-1}.$$ In particular, if $\epsilon=\epsilon_0$ is trivial we have \begin{equation}\label{BBDDMformula}A_{N,\epsilon_0}^h[\overline{\lambda_f(m)}\lambda_f(n)]  = \frac{1}{\psi(N)}\sum_{LM=N} \mu(L)M \prod_{p^2 \mid M} \left(1-\frac{1}{p^2}\right) \sum_{\substack{\ell \mid L^\infty \\ (\ell,M)=1}} \frac{1}{\ell} \Delta_{\kappa, M,\epsilon_0}(m,n\ell^2).\end{equation}  \end{lemma}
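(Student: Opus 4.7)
The idea is to extract the $L_N(1,\Ad^2 f)$ factor from $\omega_f$ and then apply Theorem \ref{thm:petersson}. By definition,
$$A^h_{N,\epsilon}[\overline{\lambda_f(m)}\lambda_f(n)] = c_\kappa \sum_{f \in H^\star_\kappa(N,\epsilon)} \frac{L_N(1,\Ad^2 f)\,\overline{\lambda_f(m)}\lambda_f(n)}{\langle f,f\rangle_N}.$$
The first step is to note that by Lemma \ref{Ogg} together with the second line of \eqref{Adformula}, the value $L_N(1,\Ad^2 f)$ is \emph{independent} of $f\in H^\star_\kappa(N,\epsilon)$; it equals the explicit Euler product
$$\mathcal{A}(N,\epsilon):= \prod_{p\mid N}\left(1-\frac{1}{p}\right)\left(1-\frac{a_{N,\epsilon}(p)}{p}\right)^{-1}.$$
We may therefore pull $\mathcal{A}(N,\epsilon)$ out of the sum over $f$ and apply Theorem \ref{thm:petersson} to obtain
$$A^h_{N,\epsilon}[\overline{\lambda_f(m)}\lambda_f(n)] = \mathcal{A}(N,\epsilon)\sum_{LM=N}\mu(L)R(M,L,\epsilon)\sum_{\substack{\ell\mid L^\infty\\(\ell,M)=1}}\frac{\overline{\epsilon}(\ell)}{\ell}\Delta_{\kappa,M,\epsilon}(m,n\ell^2).$$

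What remains is a purely elementary identity of multiplicative arithmetic functions. Since $\mu(L)$ vanishes off the squarefree integers, only squarefree $L$ contribute, and in that case the product $\prod_{p^2\mid L,\,p\nmid M}$ appearing in $R(M,L,\epsilon)$ is empty. The claim then reduces to verifying
$$\mathcal{A}(N,\epsilon)\cdot R(M,L,\epsilon) \;=\; \frac{1}{\psi(N)}\cdot M\,F(M,\epsilon)\prod_{p^2\mid M}\left(1-\frac{1}{p^2}\right)$$
for all squarefree $L$ with $LM=N$. Both sides are multiplicative functions of $N$ supported on primes $p\mid N$, so we check this prime by prime. For each $p\mid N$ the local factor on the left is
$$\left(1-\frac{1}{p}\right)\left(1-\frac{a_{N,\epsilon}(p)}{p}\right)^{-1}\cdot \frac{1}{p^{v_p(L)}}\cdot \left(1-\frac{a_{M,\epsilon}(p)}{p}\right)^{-\mathbf{1}_{p\mid(M,L)}},$$
and the local factor on the right, using the identity $\psi(N)=LM\prod_{p\mid N}(1+p^{-1})$, is
$$\frac{p^{v_p(M)}}{p^{v_p(N)}(1+p^{-1})}\cdot (\text{local factor of }F(M,\epsilon))\cdot (1-p^{-2})^{\mathbf{1}_{p^2\mid M}}.$$

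Equality of these two expressions is then verified by casework on $(v_p(L),v_p(N),\cond_p(\epsilon))$. The natural split is $v_p(L)\in\{0,1\}$, $v_p(N)\in\{1,\geq 2\}$, and $\cond_p(\epsilon)$ equal to $0$, $1$, or $v_p(N)$, which together determine the values of $a_{N,\epsilon}(p)$, $a_{M,\epsilon}(p)$ and which of the two products defining $F(M,\epsilon)$ (if any) contributes at $p$. The hard part is not any single computation but the bookkeeping: one must check that in every combination the factors of $(1-p^{-1})$, $(1\pm p^{-1})^{\pm 1}$, and $(1-p^{-2})$ rearrange correctly. For example, in the case $v_p(L)=0$, $v_p(N)=1$, $\cond_p(\epsilon)=1$ both sides equal $1$; in the case $v_p(L)=0$, $v_p(N)\geq 2$, $\cond_p(\epsilon)<v_p(N)$ both sides equal $1-p^{-1}$; and the remaining cases are similar. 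The trivial-character identity \eqref{BBDDMformula} then follows immediately, because for $\epsilon=\epsilon_0$ every $\cond_p(\epsilon_0)=0$, so $F(M,\epsilon_0)\equiv 1$ and $\overline{\epsilon}_0(\ell)\equiv 1$.
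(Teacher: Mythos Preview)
Your approach is exactly the paper's: pull out $L_N(1,\Ad^2 f)$ as the constant $\mathcal{A}(N,\epsilon)$ using Lemma~\ref{Ogg}, apply Theorem~\ref{thm:petersson}, and reduce to a prime-by-prime multiplicative identity. Your observation that $\mu(L)$ kills the product $\prod_{p^2\mid L,\,p\nmid M}$ in $R(M,L,\epsilon)$ is a nice simplification the paper does not make explicit.

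There is, however, a genuine gap in the casework. The identity
\[
\mathcal{A}(N,\epsilon)\,R(M,L,\epsilon)\;=\;\frac{M}{\psi(N)}\,F(M,\epsilon)\prod_{p^2\mid M}\left(1-\frac{1}{p^2}\right)
\]
is \emph{false} at primes where $\cond_p(\epsilon)>v_p(M)$. For instance, with $N=p$, $L=p$, $M=1$, $\cond_p(\epsilon)=1$, the left side is $1/p$ while the right side is $1/(p+1)$. The paper deals with this by observing that $\Delta_{\kappa,M,\epsilon}(m,n\ell^2)=0$ whenever $\cond(\epsilon)\nmid M$, so those terms drop out of the sum and the identity only needs to be checked under the standing hypothesis $\cond_p(\epsilon)\le v_p(M)$. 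You must insert this remark before the case analysis; without it, your claim that ``the remaining cases are similar'' is not true. Relatedly, the relevant dichotomy for the bookkeeping is not $\cond_p(\epsilon)\in\{0,1,v_p(N)\}$ but rather whether $\cond_p(\epsilon)$ equals or is strictly less than $v_p(M)$ (and likewise for $v_p(N)$), since these are what govern $a_{M,\epsilon}(p)$, $a_{N,\epsilon}(p)$, and the local factor of $F(M,\epsilon)$.
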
 Note that formula \eqref{BBDDMformula} resembles closely the formula found in \cite[Prop.~4.1]{BBDDM}.
\begin{proof}
By the definition of $L_p(1,\Ad^2 f)$ and Theorem \ref{thm:petersson} we have $$A^h[\overline{\lambda_f(m)}\lambda_f(n)]  = \prod_{p\mid N} \left(1-\frac{1}{p}\right) \left(1-\frac{a_{N,\epsilon}(p)}{p}\right)^{-1} \sum_{LM=N} \mu(L) R(M,L,\epsilon) \sum_{\substack{\ell \mid L^\infty \\ (\ell,M)=1}} \frac{\overline{\epsilon}(\ell)}{\ell} \Delta_{\kappa, M,\epsilon}(m,n\ell^2).$$ 
It suffices to show for any $L,M$ that 
\begin{equation}\label{identitylem33} \frac{\psi(LM)}{M} \prod_{p \mid LM} \left(1 - \frac{1}{p}\right) \left(1 -\frac{a_{LM,\epsilon}(p)}{p}\right)^{-1} R(M,L,\epsilon) =  \prod_{p^2 \mid M}\left( 1 - \frac{1}{p^2}\right) F(M,\epsilon).\end{equation}
We may also assume that $c(\epsilon) \mid M$, since otherwise $\Delta_{\kappa,M,\epsilon}(m,n \ell^2)=0$. Both sides of \eqref{identitylem33} are multiplicative, so it suffices to check the case $M=p^\alpha$ and $L = p^\beta$ for an arbitrary prime $p$. The following cases can be easily verified one-by-one. \begin{itemize}
\item $\alpha \geq 2$, $\beta \geq 1$, and $\cond_p(\epsilon) = \alpha$
\item $\alpha \geq 2$, $\beta \geq 1$, and $\cond_p(\epsilon) < \alpha$
\item $\alpha \geq 2$, $\beta =0$, and $\cond_p(\epsilon) = \alpha$
\item $\alpha \geq 2$, $\beta =0$, and $\cond_p(\epsilon) < \alpha$
\item $\alpha =1$, $\beta \geq 1$, and $\cond_p(\epsilon) = 1$
\item $\alpha =1$, $\beta \geq 1$, and $\cond_p(\epsilon) = 0$
\item $\alpha =1$, $\beta =0$, and $\cond_p(\epsilon) = 1$
\item $\alpha =1$, $\beta =0$, and $\cond_p(\epsilon) = 0$
\item $\alpha =0$, $\beta \geq 2$, and $\cond_p(\epsilon) = 0$
\item $\alpha =0$, $\beta =1$, and $\cond_p(\epsilon) = 0$.
\end{itemize}
\end{proof}

\section{Analysis for $\Gamma_0(N)$}\label{AforG0}
Now we put together \eqref{eq:ne3}, the newform formula \eqref{eq:newforms}, and the Petersson formula \eqref{petersson2}. 
By \eqref{eq:ne3} and \eqref{eq:newforms} we have that $$\overline{\tr( T'_m | S_\kappa(\Gamma_0(N),\epsilon))} = A + E,$$ where for an integer $r\geq 1$ to be chosen later we set $x^r = (N\kappa)^{10}$ and have \begin{multline}\label{eq:Aformula}A = \frac{\kappa-1}{12} \sum_{LM=N} \frac{d(L)}{\zeta^{(M)}(2)} \sum_{\substack{k \leq x^{1/2} \\ (k,M)=1}} \frac{1}{k^2} \sum_{\substack{\ell \leq x/k^2 \\ (\ell,M)=1}} \frac{\overline{\epsilon}(\ell)}{\ell} \sum_{WQ= M} \mu(Q) W F(W,\epsilon) \prod_{p^2 \mid W} \left(1 - \frac{1}{p^2}\right) \\  \times \sum_{\substack{ q \mid Q^\infty \\ (q,W)=1}} \frac{\overline{\epsilon}(q)}{q} \Delta_{\kappa,W,\epsilon}(m,q^2\ell^2),\end{multline} and $E$ is the error term from \eqref{eq:ne3} of size \begin{equation}\label{errG0N} E\ll_{r,\eps} \kappa N^{1+\eps}x^{-\frac{\delta }{20}+\eps} + N^\eps.\end{equation} Applying \eqref{petersson2} to $A$ we get that $$A=D + OD,$$ where $D$ and $OD$ are the contributions from the diagonal term and off-diagonal term of \eqref{petersson2}, respectively. We insert $\delta_{m=q^2\ell^2} \delta_{\cond(\epsilon) \mid W}$ for $\Delta_{\kappa, W,\epsilon}(m,q^2\ell^2)$ in \eqref{eq:Aformula} to find  $$ D  =  \frac{\kappa-1}{12}  \frac{\overline{\epsilon}(m^{\frac{1}{2}})}{m^{\frac{1}{2}}}\sum_{LM=N} \frac{d(L)}{\zeta^{(M)}(2)} \sum_{\substack{k \leq x^{1/2}/m^{1/4} \\ (k,M)=1}} \frac{1}{k^2} \sum_{WQ= M} \mu(Q) W F(W,\epsilon) \prod_{p^2 \mid W} \left(1 - \frac{1}{p^2}\right)\delta_{\cond(\epsilon) \mid W} .$$
Extending the sum over $k$ to infinity we conclude that
$$D = \frac{\kappa-1}{12}  \frac{\overline{\epsilon}(m^{\frac{1}{2}})}{m^{\frac{1}{2}}}\sum_{LM=N} d(L) \sum_{WQ= M} \mu(Q) W F(W,\epsilon) \prod_{p^2 \mid W} \left(1 - \frac{1}{p^2}\right)\delta_{\cond(\epsilon) \mid W}  + O_\eps \left( \frac{\kappa N^{1+\eps}}{x^{\frac{1}{2}}m^{\frac{1}{4}}} |\epsilon(m^{\frac{1}{2}})|\right) .$$ By a tedious case check on prime powers we have $$ \psi(N)\delta_{\cond(\epsilon) \mid N} = \sum_{LM=N} M F(M,\epsilon) \delta_{\cond(\epsilon) \mid M} \prod_{p^2 \mid M}\left(1 - \frac{1}{p^2}\right).$$ 
Therefore the result of the diagonal contribution is \begin{equation}\label{diag}D= \frac{\kappa-1}{12}  \frac{\overline{\epsilon}(m^{\frac{1}{2}})}{m^{\frac{1}{2}}}\psi(N)+ O\left(\frac{\kappa N^{1+\eps}}{x^{\frac{1}{2}}m^{\frac{1}{4}}}|\epsilon(m^{\frac{1}{2}})|\right),\end{equation} which matches what one finds directly from the identity contribution of the Eichler-Selberg trace formula.

Now we treat the off-diagonal terms. Let \begin{equation}\label{ANTproblem1}B(Y,m,W) = \sum_{\substack{\ell \leq Y \\ (\ell,M)=1}} \sum_{\substack{q \mid Q^\infty \\ (q,W)=1}} \frac{\overline{\epsilon}(q\ell)}{q\ell} \sum_{c \equiv 0 \mods W} \frac{S_{\epsilon}(m,q^2\ell^2,c)}{c} J_{\kappa-1}\left(\frac{4 \pi q\ell \sqrt{m}}{c}\right).\end{equation} Then we have that $$OD = \frac{\kappa-1}{12} \sum_{LM=N} \frac{d(L)}{\zeta^{(M)}(2)}  \sum_{WQ= M} \mu(Q) W F(W,\epsilon) \prod_{p^2 \mid W} \left(1 - \frac{1}{p^2}\right) \sum_{\substack{k \leq x^{1/2} \\ (k,M)=1}} \frac{1}{k^2} B(x/k^2,m,W).$$

  \begin{lemma}\label{ILSlem}
  Let $d_3$ denote the $3$-divisor function. For any $m,n\geq 1$ we have  
   \begin{multline*}\sum_{c \equiv 0 \mods W} \frac{S_\epsilon(m,n,c)}{c} J_{\kappa-1}\left(\frac{4 \pi \sqrt{mn}}{c}\right)\\  \ll \cond(\epsilon)^{\frac{1}{4}}\prod_{p|\cond(\epsilon)}p^{\frac{1}{4}} \frac{(m,n,W)^{\frac{1}{2}} d_3((m,n))d(W)}{W\kappa^{\frac{5}{6}}} \left( \frac{mn}{\sqrt{mn} + \kappa W}\right)^{\frac{1}{2}} \log 2 mn .\end{multline*}
  \end{lemma}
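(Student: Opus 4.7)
The plan is to change variables $c = Wc'$ so the sum reduces to one over $c' \geq 1$, and then to apply in combination the Weil-type bound for the twisted Kloosterman sum and standard bounds for $J_{\kappa-1}$. By Lemma \ref{Weil} (the Weil bound for nebentype Kloosterman sums), one has a bound of the shape
$$|S_\epsilon(m,n,Wc')| \ll d(Wc')\,(m,n,Wc')^{1/2} (Wc')^{1/2} \cond(\epsilon)^{1/4}\prod_{p \mid \cond(\epsilon)} p^{1/4},$$
so after pulling absolute values through and inserting this, the quantity to estimate is a sum of the form $\sum_{c'} d(Wc')(m,n,Wc')^{1/2}(Wc')^{-1/2} |J_{\kappa-1}(4\pi\sqrt{mn}/(Wc'))|$ times the conductor factor and an overall $W^{-1}$.

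Next, I would split the $c'$-range at the transition point $c'_0 \asymp \sqrt{mn}/(\kappa W)$ of the Bessel function. For $c' \geq c'_0$, the argument $x := 4\pi\sqrt{mn}/(Wc') \leq \kappa$ and the small-argument bound $|J_{\kappa-1}(x)| \leq (ex/(2\kappa))^{\kappa-1}$ delivers essentially geometric decay in $c'$, so only the dyadic scale closest to $c'_0$ contributes. For $c' \leq c'_0$, the argument exceeds $\kappa$ and I would use the uniform bound $|J_{\kappa-1}(x)| \ll \min(\kappa^{-1/3}, x^{-1/2})$; the former accounts for the $\kappa^{-1/3}$ in the final estimate, while the latter produces, on summation over $c'$, the factor $((\sqrt{mn}/(\kappa W))^{1/2}$ in the dominant regime and reconciles the two branches of the quantity $(mn/(\sqrt{mn}+\kappa W))^{1/2}$. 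The remaining $\kappa^{-1/2}$ of the total $\kappa^{-5/6}$ should come out of the normalization $x^{-1/2}$ at the transition $x \asymp \kappa$.

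The gcd and divisor factors will be handled by the elementary maneuver $(m,n,Wc') = \sum_{d \mid (m,n,Wc')} \text{(indicator)}$, extracting a common factor $d \mid (m,n)$, writing $Wc' = dc''$, and pulling out $(m,n,W)^{1/2}$ when $d \mid W$. The remaining sum over $d \mid (m,n)/(m,n,W)$ together with the sum $\sum_{c''} d(Wc'')/(c'')^{1+1/2}$ on each dyadic range produces, via standard divisor estimates, the $d_3((m,n))d(W) \log 2mn$ appearing in the statement (the $d_3$ coming from folding together two divisor functions along the divisor $d$).

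The main obstacle will be stitching these two regimes together cleanly so that the bound matches the exact uniform expression $(mn/(\sqrt{mn}+\kappa W))^{1/2}$ rather than two separate cases, and in particular arranging the exponent $\kappa^{-5/6}$ to arise uniformly: this forces a slightly sharper Bessel bound at the transition than either $\kappa^{-1/3}$ or $x^{-1/2}$ gives on its own, and requires interpolating $|J_{\kappa-1}(x)| \ll \kappa^{-1/3}(1 + (x/\kappa)^{1/2})^{-1}$ across the crossover. The second delicate point is the precise shape of the conductor dependence; here one must use the explicit factorization of $S_\epsilon(m,n,c)$ into multiplicative pieces indexed by prime powers dividing $c$, and confirm that each bad prime $p \mid \cond(\epsilon)$ contributes at most $p^{1/4}$ over the Weil square-root — this is exactly the shape of the factor $\cond(\epsilon)^{1/4}\prod_{p\mid \cond(\epsilon)} p^{1/4}$.
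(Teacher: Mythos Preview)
Your approach is correct and coincides with the paper's: the paper's proof is a one-line reference to \cite[Cor.~2.2]{ILS}, replacing the untwisted Weil bound used there by Lemma~\ref{Weil}. The two ``obstacles'' you anticipate are not issues --- the conductor factor $\cond(\epsilon)^{1/4}\prod_{p\mid\cond(\epsilon)}p^{1/4}$ is supplied directly by Lemma~\ref{Weil} as a black box with no further factorization on your part, and the exponent $\kappa^{-5/6}$ arises cleanly from summing $(c')^{-1/2}\cdot\kappa^{-1/3}$ over the $\asymp\sqrt{mn}/(\kappa W)$ values of $c'$ lying in the transition range, so no interpolated Bessel bound is needed.
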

  \begin{proof}
 The proof is identical to \cite[Cor.~2.2]{ILS} but with the following bound on the Kloosterman sum in lieu of the standard bound without nebentype character.   \begin{lemma}\label{Weil}  For integers $c\in N\Z$ and $a,b \in\Z$ with $c\neq 0$  and $\cond(\epsilon) \mid N$,
  we have the estimate
\[|S_{\epsilon}(a,b;c)|\le d(c)\,(a,b,c)^{\frac{1}{2}}
  \,c^{\frac{1}{2}}\,\cond(\epsilon)^{\frac{1}{4}}\cond^*(\epsilon)^{\frac{1}{4}}.\]
\end{lemma}
 \begin{proof} See Knightly and Li \cite[Thm.~9.2]{KLkuznetsov}. \end{proof}  \end{proof}

Applying Lemma \ref{ILSlem} and estimating sums by integrals we find $$B(Y,m,W) \ll \cond(\epsilon)^{\frac{1}{4}}\cond^*(\epsilon)^{\frac{1}{4}} \frac{ d(W)m^{\frac{1}{4}}Y^{\frac{1}{2}}}{W\kappa^{\frac{5}{6}}}\log mY,$$ hence one estimates that $$OD \ll_\eps \cond(\epsilon)^{\frac{1}{4}}\cond^*(\epsilon)^{\frac{1}{4}}  x^{\frac{1}{2}} \kappa^{\frac{1}{6}} m^{\frac{1}{4}}N^\eps \log mx .$$ We have $\tr(T'_m|  S_\kappa(\Gamma_0(N,\epsilon)))  =  \overline{D}+\overline{OD}+\overline{E}$, and so collecting error terms we obtain 
 \begin{multline}\label{tracewithx} \tr(T'_m|  S_\kappa(\Gamma_0(N,\epsilon))) 
 = \frac{\kappa-1}{12}  \frac{\epsilon(m^{\frac{1}{2}})}{m^{\frac{1}{2}}}\psi(N)+ O_\eps \Bigg( \cond(\epsilon)^{\frac{1}{4}}\cond^*(\epsilon)^{\frac{1}{4}}   x^{\frac{1}{2}} \kappa^{\frac{1}{6}} m^{\frac{1}{4}}N^\eps \log mx \\  + \kappa N^{1+\eps} x^{-\frac{\delta}{20}+ \eps} + N^\eps \Bigg).\end{multline}

We now optimize the value of $r$. By \cite{HLAppendix, BanksGL3}, the exponent $\delta=1$ is admissible.  
The error in \eqref{tracewithx} is minimized when $$x^\frac{11}{20} = \frac{N\kappa^{\frac{5}{6}}}{m^{\frac{1}{4}} \cond(\epsilon)^{\frac{1}{4}}\cond^*(\epsilon)^{\frac{1}{4}} }.$$ Let us assume that there is some $\eta>0$ such that \begin{equation}\label{mbound}m^{\frac{1}{4}}\cond(\epsilon)^{\frac{1}{4}}\cond^*(\epsilon)^{\frac{1}{4}}  \ll (N \kappa^{\frac{5}{6}})^{1-\eta}.\end{equation} We choose $r\geq 1$ to be the nearest integer to $$\frac{11}{2} \left(1 + \frac{ \log (\kappa^{\frac{1}{6}} m^{\frac{1}{4}}\cond(\epsilon)^{\frac{1}{4}}\cond^*(\epsilon)^{\frac{1}{4}} )}{\log (N \kappa^{\frac{5}{6}}) - \log (m^{\frac{1}{4}} \cond(\epsilon)^{\frac{1}{4}}\cond^*(\epsilon)^{\frac{1}{4}} )}\right) ,$$ which by \eqref{mbound} is then bounded above uniformly in terms of $\eta>0$ only.

\section{Analysis for $\Gamma(M,N)$}\label{AforMN}

Recall from \eqref{eq:mnstart} that $$\tr (\langle \overline{d} \rangle T_m' | S_\kappa(\Gamma(M,N)))   = \sum_{\epsilon \mods N} \overline{\epsilon}(d) \tr(T_m' | S_\kappa(\Gamma_0(MN),\epsilon)),$$ and that in section \ref{AforG0} we decomposed the interior of this as $$\overline{\tr(T_m' | S_\kappa(\Gamma_0(MN),\epsilon))} = D + OD + E.$$ Summing the formula \eqref{diag} for $D$ and \eqref{errG0N} for $E$ trivially over characters $\epsilon \mods{N}$ we get \begin{multline}\label{formula51} \tr (\langle \overline{d} \rangle T_m' | S_\kappa(\Gamma(M,N)))   = \frac{\kappa-1}{24}m^{-\frac{1}{2}} \varphi(N) \psi(NM) \left(\delta_N(m^{\frac{1}{2}} d, 1) + (-1)^\kappa \delta_N(m^{\frac{1}{2}} d, -1)\right)   \\ + \overline{OD^*}+ O_{\eta,\eps}\left(\kappa (MN^2)^{1+\eps} x^{-\frac{\delta}{20}+\eps} + N(MN)^\eps\right),\end{multline} where $x^r = (MN\kappa)^{10}$, $r$ is a parameter to be chosen later, and $$OD^*= \sum_{\substack{\epsilon \mods N \\ \epsilon(-1) = (-1)^\kappa}} \epsilon(d) OD.$$ Let \begin{multline*}B^*(Y,m,W) =  \sum_{\substack{\epsilon \mods N \\ \epsilon(-1)=(-1)^\kappa}} \epsilon(d) F(W,\epsilon) \sum_{\substack{(\ell,K) =1 \\ \ell \leq Y}}\sum_{\substack{q \mid Q^\infty \\ (q,W)=1}}\frac{\overline{\epsilon(q\ell)}}{q\ell} \sum_{c \equiv 0 \mods W} \frac{S_\epsilon(m,q^2\ell^2,c)}{c} \\ \times J_{\kappa-1}\left( \frac{4 \pi \ell q \sqrt{m}}{c}\right),\end{multline*} so that we have \begin{equation}\label{Ostar}OD^* = \frac{\kappa -1}{12}  \sum_{LK=MN} \frac{d(L)}{\zeta^{(K)}(2) } \sum_{\substack{ k \leq x^{1/2} \\ (k,K)=1}} \frac{1}{k^2} \sum_{WQ = K } \mu(Q) W  \prod_{p^2 \mid W} (1-\frac{1}{p^2}) B^*(x/k^2,m,W).\end{equation} 

We would like to utilize the orthogonality of characters over $\epsilon \mods N$.  To implement this, we now refresh the notation. Suppose $W,N\geq 1$ are integers such that $W \mid N^2$. For $a,b,d,\kappa \in \Z$ and $1\leq c \equiv 0 \mods W$ define  $$T_W(a,b,c) := \sum_{\substack{\epsilon \mods N \\ \epsilon(-1)=(-1)^\kappa \\ \cond(\epsilon) \mid W}} \epsilon(d)\overline{\epsilon}(b)  F(W,\epsilon) S_\epsilon(a,b,c).$$
With this notation, we have \begin{equation}\label{Bformula}B^*(Y,m,W) = \sum_{\substack{(\ell,K) =1 \\ \ell \leq Y}}\sum_{\substack{q \mid Q^\infty \\ (q,W)=1}}\frac{1}{q\ell} \sum_{c \equiv 0 \mods W} \frac{T_W(m,q^2 \ell^2,c)}{c} J_{\kappa-1}\left( \frac{4 \pi \ell q \sqrt{m}}{c}\right).\end{equation}
We can derive a bound on $T_W$ by appealing to the Weil bound for Kloosterman sums.
\begin{lemma}\label{Tsum}
Suppose $W,N\geq 1$ such that $W\mid N^2$, $a,b,d,\kappa \in \Z$ such that $(b,W)=1$, $(d,N)=1$, and $1 \leq c \equiv 0 \mods W$. We factor $c=c_1c_2$ with $c_1 \mid W^\infty$ and $(c_2,W)=1$. Then
$$ |T_W(a,b,c)| \leq \psi(c_1) d(c_2)(a,b,c_2)^{1/2} c_2^{1/2}.$$
\end{lemma}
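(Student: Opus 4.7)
The plan is to separate the modulus $c = c_1 c_2$ into its $W$-part $c_1$ and $W$-coprime part $c_2$, handle the $W$-coprime part by the Weil bound, and analyze the remaining character-twisted sum on $c_1$ via character orthogonality. First, since $W \mid c$ and $(c_2, W) = 1$, one has $W \mid c_1$, and therefore $\cond(\epsilon) \mid c_1$ for every $\epsilon$ in the sum. The standard Chinese Remainder factorization of twisted Kloosterman sums then gives
$$S_\epsilon(a, b, c) = S_\epsilon(a\overline{c_2}, b\overline{c_2}, c_1) \cdot S(a\overline{c_1}, b\overline{c_1}, c_2),$$
where the bars denote multiplicative inverses modulo the indicated moduli and the second factor is an ordinary (untwisted) Kloosterman sum, since $\epsilon$ is trivial on $(\Z/c_2\Z)^\times$. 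Lemma \ref{Weil} with trivial nebentype bounds this second factor by $d(c_2)(a,b,c_2)^{1/2} c_2^{1/2}$, so the lemma reduces to showing
$$\Big|\sum_\epsilon \epsilon(d)\overline\epsilon(b) F(W,\epsilon)\, S_\epsilon(a\overline{c_2}, b\overline{c_2}, c_1)\Big| \leq \psi(c_1).$$

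Opening the remaining Kloosterman sum as $\sums_{x \mods{c_1}}\epsilon(x)\, e((a\overline{c_2} x + b\overline{c_2} \bar x)/c_1)$ and swapping the sums over $\epsilon$ and $x$, this quantity equals $\bigl|\sums_{x \mods{c_1}} e((ax + b\bar x)\overline{c_2}/c_1)\, G(dx\bar b)\bigr|$, where $G(y) = \sum_\epsilon \epsilon(y) F(W,\epsilon)$ with the sum running over $\epsilon \mods{N}$ satisfying $\cond(\epsilon)\mid W$ and $\epsilon(-1)=(-1)^\kappa$. The parity condition can be removed via $\frac{1}{2}(1 + (-1)^\kappa\epsilon(-1))$ at the cost of a trivial $y \mapsto -y$ symmetry, reducing the problem to bounding the analogous expression with the unrestricted sum $\tilde G(y) = \sum_{\cond(\epsilon)\mid W}\epsilon(y) F(W,\epsilon)$.

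The key point is that $\tilde G$ factors multiplicatively as $\tilde G(y) = \prod_{p \mid W}\tilde G_p(y)$, with $\tilde G_p$ depending only on $y \bmod p^{\alpha_p}$ (where $\alpha_p = v_p(W)$); moreover, $\tilde G_p \equiv 1$ for primes $p\mid W$ with $p\nmid N$. Using the identity $\sum_{\chi \mods{p^k}}\chi(y) = \varphi(p^k)[y\equiv 1 \mods{p^k}]$ for $(y,p)=1$, a direct computation from the definition of $F(W,\epsilon)$ gives
$$\tilde G_p(y) = p^{\alpha_p}[y \equiv 1 \mods{p^{\alpha_p}}] - p^{\alpha_p-2}[y \equiv 1 \mods{p^{\alpha_p-1}}]\qquad (\alpha_p \geq 2),$$
and a similar closed form for $\alpha_p = 1$. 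Writing $c_1 = \prod_{p\mid W} p^{\beta_p}$ (with $\beta_p \geq \alpha_p$) and applying CRT to the exponential sum, the expression factors into a product of local sums over $x_p \mods{p^{\beta_p}}$. Inserting the explicit formula for $\tilde G_p$, each local sum becomes a weighted sum of exponential sums restricted by congruences $x_p \equiv x_0 \mods{p^k}$ with $k \leq \alpha_p$; the trivial bound of $p^{\beta_p - k}$ terms on each such constrained sum, combined with the coefficients from $\tilde G_p$, telescopes to $p^{\beta_p} + p^{\beta_p - 1} = \psi(p^{\beta_p})$. Taking the product over primes of $c_1$ gives $\psi(c_1)$, as required. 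The main obstacle is the case analysis in computing $\tilde G_p$: the peculiar weights in the definition of $F(W,\epsilon)$ must conspire exactly with the CRT structure to yield $\psi(p^{\beta_p})$ (rather than something larger), and one has to check the cases $\alpha_p = 1$ versus $\alpha_p \geq 2$, and $p\mid N$ versus $p\nmid N$, by hand.
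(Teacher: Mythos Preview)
Your approach is essentially the same as the paper's: drop the parity condition via $\tfrac{1}{2}(1+(-1)^\kappa\epsilon(-1))$, open the Kloosterman sum, swap the order of summation, evaluate the resulting character sum by orthogonality, and then bound the congruence-restricted exponential sums trivially (Weil for the part coprime to $W$). The paper merely organizes things slightly differently: it applies full twisted multiplicativity to $T'_W(a,b,c)$ first and then runs a four-case local analysis according to the relative sizes of $v_p(W)$ and $v_p(N)$, recovering your $c_1/c_2$ split as the dichotomy $v_p(W)>0$ versus $v_p(W)=0$.

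One small gap to flag: your displayed formula for $\tilde G_p$ is only valid when $\alpha_p=v_p(W)\leq v_p(N)$. Since the hypothesis is only $W\mid N^2$, the case $v_p(N)<v_p(W)\leq 2v_p(N)$ genuinely occurs (this is the paper's Case~(1)), and your aside ``$\tilde G_p\equiv 1$ for $p\mid W$, $p\nmid N$'' is vacuous rather than covering it. In that missing case every character in the sum has $\cond_p(\epsilon)\leq v_p(N)<v_p(W)$, so the $F$-weight is identically $1$ and one gets simply $\tilde G_p(y)=\varphi(p^{v_p(N)})\,[\,y\equiv 1\ \mathrm{mod}\ p^{v_p(N)}\,]$; the trivial bound $\varphi(p^{v_p(N)})\cdot p^{\,v_p(c_1)-v_p(N)}\leq \psi(p^{v_p(c_1)})$ then finishes that factor. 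With this extra case inserted, your argument is complete and matches the paper's.
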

\begin{proof}
Consider the sum $$T'_{W}(a,b,c) := \sum_{\substack{\epsilon \mods {N} \\ \cond(\epsilon) \mid W}} \epsilon(d)\overline{\epsilon}(b)F(W,\epsilon)S_\epsilon(a,b,c),$$ which is a minor variation of $T_W(a,b,c)$, but omitting the global condition $\epsilon(-1)=(-1)^\kappa$. We first consider the sum $T'_W$ locally, returning to $T_W$ at the end of the proof.
Let $\alpha, \beta, \gamma \geq 0$ such that $\alpha \leq \gamma$, $\alpha \leq 2\beta$, $(d,p^\beta)=(b,p^\alpha)=1$, and consider $T'_{p^\alpha}(a,b,p^\gamma)$. Let $$I(\alpha,\beta) : = \sum_{\epsilon \mods {p^\beta} } \epsilon(dx)\overline{\epsilon}(b) \delta_{\cond_p(\epsilon) \leq \alpha} \begin{cases} 1+\frac{1}{p} & \text{ if }\cond_p(\epsilon)= \alpha = 1 \\ \left(1-\frac{1}{p}\right)^{-1} & \text{ if } \cond_p(\epsilon)= \alpha \geq 2 \\ 1 & \text{ else.}\end{cases}$$ By opening the Kloosterman sum and exchanging order of summation we have \begin{equation}\label{T'intermsofI}T_{p^\alpha}'(a,b,p^\gamma) = \sums_{x \mods {p^\gamma}} e \left( \frac{ax+b\overline{x}}{p^\gamma}\right) I(\alpha, \beta).\end{equation}
Next we break into four cases: \begin{enumerate}
\item\label{case1} $ \alpha> \beta$
\item\label{case2} $0 = \alpha \leq \beta$
\item\label{case3} $1 = \alpha \leq \beta$
\item\label{case4} $2 \leq \alpha \leq \beta$. 
\end{enumerate}
Recall the orthogonality relation $$ \sum_{\epsilon \mods n } \epsilon(a)\overline{\epsilon}(b) =  \varphi(n)\delta_n(a,b)$$ and the almost-orthogonality relation (see e.g.~\cite[Section 2]{HBalmostOR}) $$ \sum_{\cond(\epsilon) = c  } \epsilon(a)\overline{\epsilon}(b) =  \sum_{\delta \mid (a-b,c)} \varphi(\delta) \mu\left( \frac{c}{\delta}\right) .$$ 
We apply these to evaluate $I(\alpha,\beta)$ in cases \eqref{case1}-\eqref{case4}. We find 
\begin{equation}\label{Iformula}I(\alpha, \beta) =  \begin{cases} \varphi(p^\beta) \delta_{p^\beta}(xd,b)& \text{ if } \alpha > \beta \\
1 & \text{ if } 0=\alpha \leq \beta \\
\varphi(p) \delta_p(xd,b) + \frac{1}{p} \sum_{\delta \mid (p,xd-b)}\varphi(\delta) \mu(p^\gamma/\delta) & \text{ if } 1 = \alpha \leq \beta \\
\varphi(p^\alpha) \delta_{p^\alpha}(xd,b) + \frac{1}{p-1} \sum_{\delta \mid (p^\alpha,xd-b)}\varphi(\delta) \mu(p^\gamma/\delta) & \text{ if } 1 = \alpha \leq \beta. \end{cases}\end{equation}
Recall that $(d,p^\beta)=1$, so that $d^{-1} \mods {p^\beta}$  (or $\mods {p^\gamma}$ in cases \eqref{case3} and \eqref{case4}) exists. Inserting \eqref{Iformula} to \eqref{T'intermsofI}, we find the following.

\textbf{Case \eqref{case1}: $\beta<\alpha$.} $$T'_{p^\alpha}(a,b,p^\gamma) = \varphi(p^\beta) \sums_{\substack{x \mods {p^\gamma} \\ x \equiv d^{-1}b \mods {p^\beta}}}  e \left( \frac{ax+b\overline{x}}{p^\gamma}\right).$$ 

\textbf{Case \eqref{case2}: $\beta \geq \alpha =0$.}  $$T'_{1}(a,b,p^\gamma) = \sums_{x \mods {p^\gamma} }  e \left( \frac{ax+b\overline{x}}{p^\gamma}\right)=  S(a,b,p^\gamma).$$

\textbf{Case \eqref{case3}: $\beta \geq \alpha =1$.} $$ T'_{p}(a,b,p^\gamma) = \varphi(p) \sums_{\substack{x \mods {p^\gamma} \\ x \equiv d^{-1}b \mods {p}}}  e \left( \frac{ax+b\overline{x}}{p^\gamma}\right) + \frac{1}{p} \sum_{\delta \mid p } \varphi(\delta) \mu(p/\delta)\sums_{\substack{x \mods {p^\gamma} \\ x \equiv d^{-1}b \mods {\delta}}}  e \left( \frac{ax+b\overline{x}}{p^\gamma}\right).$$
 
 \textbf{Case \eqref{case4}: $\beta \geq \alpha \geq 2$.} $$ T'_{p^\alpha}(a,b,p^\gamma) = \varphi(p^\alpha) \sums_{\substack{x \mods {p^\gamma} \\ x \equiv d^{-1}b \mods {p^\alpha}}}  e \left( \frac{ax+b\overline{x}}{p^\gamma}\right) + \frac{1}{p-1} \sum_{\delta \mid p^\alpha } \varphi(\delta) \mu(p^\alpha/\delta)\sums_{\substack{x \mods {p^\gamma} \\ x \equiv d^{-1}b \mods {\delta}}}  e \left( \frac{ax+b\overline{x}}{p^\gamma}\right).$$
Using the Weil bound for Kloosterman sums and trivial bounds, we find for all integers $a,b$, and non-zero integers $0\leq i \leq j$, and $(y,p)=1$ we have \begin{equation}\label{KLbound} \left| \sums_{\substack{x \mods{p^j} \\ x \equiv y \mods {p^i}}} e\left(\frac{ax+b^2\overline{x}}{p^\gamma}\right) \right| \leq \begin{cases} 
 d(p^j)(a,b^2,p^j)^{\frac{1}{2}} \sqrt{p^j} & \text{ if } i = 0 \\ p^{j-i} & \text{ else.}\end{cases}\end{equation}
Applying \eqref{KLbound} to the various cases above, we find that cases \eqref{case1}, \eqref{case3}, and \eqref{case4}, i.e. when $\alpha >0$, the bound \begin{equation}\label{T'bound1}|T'_{p^\alpha}(a,b,p^\gamma)| \leq \psi(p^{\gamma}).\end{equation} In case \eqref{case2}, i.e. when $\alpha = 0$, we have \begin{equation}\label{T'bound2}|T'_{p^\alpha}(a,b,p^\gamma) | \leq d(p^\gamma) (a,b,p^\gamma)^{1/2} p^{\gamma/2}.\end{equation} Thus the estimation of $T'_{p^\alpha}(a,b,p^\gamma)$ is finished.

Now we return to the case of $T_W(a,b,c)$. We have $$T_W(a,b,c) = \frac{1}{2} T'_W(a,b,c) + \frac{(-1)^\kappa}{2} T'_W(a,-b,c),$$ so it suffices to establish the bound stated in the lemma for $T'_W(a,b,c)$. We have that $T'_W(a,b,c)$ is twisted multiplicative, i.e. we have a factorization \begin{equation}\label{eq:prod3sub}T'_W(a,b,c) = \prod_{\substack{ p^\alpha ||W  \\ p^\gamma || c}} T'_{p^\alpha} (a \overline{cp^{-\gamma}}, b \overline{cp^{-\gamma}}, p^\gamma).\end{equation}
Bounding the left hand side of \eqref{eq:prod3sub} using \eqref{T'bound1} and \eqref{T'bound2}, we conclude the proof of the lemma.
\end{proof}

Applying Lemma \ref{Tsum} to \eqref{Bformula} we get \begin{equation*} B^*(Y,m,W)  \leq \sum_{\substack{(\ell,K) =1 \\ \ell \leq Y}}\sum_{\substack{q \mid Q^\infty \\ (q,W)=1}}\frac{1}{q\ell} \sum_{W \mid c_1 \mid W^\infty} \frac{\psi(c_1)}{c_1} \sum_{(c_2,W)=1} \frac{d(c_2) (m,q^2\ell^2,c_2)^{\frac{1}{2}}}{\sqrt{c_2}}\left|J_{\kappa-1}\left( \frac{4 \pi q\ell \sqrt{m}}{c_1c_2}\right)\right|.\end{equation*} Again following closely the proof of \cite[Cor.~2.2]{ILS} we have that $$ \sum_{(c_2,W)=1} \frac{d(c_2) (a,b^2,c_2)^{\frac{1}{2}}}{\sqrt{c_2} }\left|J_{\kappa-1}\left( \frac{4 \pi b\sqrt{a}}{c_1c_2}\right)\right| \ll \frac{d_3((a,b^2))}{\kappa^{\frac{5}{6}}\sqrt{c_1}} \left( \frac{b^2a}{b\sqrt{a} + c_1\kappa}\right)^{\frac{1}{2}} \log 2b^2a.$$ We have moreover that $$ \sum_{W \mid c_1 \mid W^\infty} \frac{1}{\sqrt{c_1}}  \frac{1}{(b\sqrt{a} + c_1\kappa)^{\frac{1}{2}}} \leq \frac{2}{W^{\frac{1}{2}}b^{\frac{1}{2}}a^{\frac{1}{4}}}.$$  These last two estimations lead to \begin{align*} B^*(Y,m,W) & \ll \frac{m^{\frac{1}{4}}}{\kappa^{\frac{5}{6}}}\frac{\psi(W)}{W^\frac{3}{2}}\sum_{\substack{(\ell,K) =1 \\ \ell \leq Y}}\sum_{\substack{q \mid Q^\infty \\ (q,W)=1}}\frac{d_3((m,q^2\ell^2))}{\sqrt{q\ell}}\log (2 q^2\ell^2 m)  \\
&  \ll \frac{m^{\frac{1}{4}} Y^{\frac{1}{2}}(\log Y)^3 \log 2m}{\kappa^{\frac{5}{6}} } \frac{\psi(W)}{W^\frac{3}{2}}.\end{align*} Inserting this into \eqref{Ostar} we get $$OD^* \ll \kappa^{\frac{1}{6}} MN^{\frac{1}{2}+\eps}x^{\frac{1}{2}}(\log x)^3 m^{\frac{1}{4}} \log 2m,$$ and inserting this into \eqref{formula51} we conclude that \begin{multline}\label{GMN1} \tr (\langle \overline{d} \rangle T_m' | S_\kappa(\Gamma(M,N)))   = \frac{\kappa-1}{24}m^{-\frac{1}{2}} \varphi(N) \psi(NM) \left(\delta_N(m^{\frac{1}{2}} d, 1) + (-1)^\kappa \delta_N(m^{\frac{1}{2}} d, -1)\right)  \\ + O_{\eta,\eps}\left(\kappa^{\frac{1}{6}} MN^{\frac{1}{2}} x^{\frac{1}{2}+\eps} m^{\frac{1}{4}}  \log 2m + \kappa (MN^2)^{1+\eps} x^{-\frac{\delta}{20}+\eps} + N(MN)^\eps\right).\end{multline}
Now we optimize the value of $r$. The error term is minimized when $$x^\frac{11}{20} = \frac{ N^{\frac{3}{2}}\kappa^{\frac{5}{6}}}{m^{\frac{1}{4}}}.$$ Let us assume that there is some $\eta>0$ such that $$m^{\frac{1}{4}} \ll (N^{\frac{3}{2}} \kappa^{\frac{5}{6}})^{1-\eta}.$$ We choose $r\geq 1$ to be the nearest integer to $$\frac{11}{2} \left( \frac{\log MN\kappa }{\log N^{\frac{3}{2}} \kappa^{\frac{5}{6}} - \log m^{\frac{1}{4}}}\right) ,$$ which is then bounded above uniformly in terms of $\eta>0$ only.

\section{Proof of Proposition \ref{prop:Holder}}\label{holderproof}
\begin{proof}  
We have by Lemma \ref{Rankin} that $$A[\alpha_f] = \frac{\kappa-1}{4 \pi} \frac{\Vol X_0(N)}{\zeta^{(N)}(2)} \sum_{f \in H^\star_\kappa(N,\epsilon)} \omega_f \alpha_f L^{(N)}(1,\Ad^2 f)  = \frac{\kappa-1}{4 \pi} \frac{\Vol X_0(N)}{\zeta^{(N)}(2)} A^h[\alpha_f L^{(N)}(1,\Ad^2 f)].$$ Recall that we have set $\rho_f(n)$ to be the Dirichlet series coefficients of $L^{(N)}(s,\Ad^2 f)$, along with $$\omega_f(x) = \sum_{n\leq x} \frac{\rho_f(n)}{n}, \,\,\,\text{ and }\,\,\,\omega_f(x,y)= \sum_{x< n\leq y} \frac{\rho_f(n)}{n}.$$ 
\begin{lemma}\label{AFE}
We have $$L^{(N)}(1, \Ad^2 f) = \omega_f(x) + \omega_f(x,y) + O_\eps((N\kappa)^{\frac{1}{2}} y^{-\frac{1}{2}+ \eps}).$$
Assuming the generalized Lindel\"of hypothesis, the $(N\kappa)^{1/2}$ can be reduced to $(N\kappa)^\eps$.
\end{lemma}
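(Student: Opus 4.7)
The statement is really an approximate functional equation / tail estimate at $s=1$: since $\omega_f(x)+\omega_f(x,y) = \omega_f(y) = \sum_{n\leq y}\rho_f(n)/n$, the split point $x$ is inert and the content is
\[
\omega_f(y) = L^{(N)}(1,\Ad^2 f) + O_\eps((N\kappa)^{1/2} y^{-1/2+\eps}).
\]
The strategy is standard Mellin--Perron plus contour shift, with convexity for the $\GL_3$ adjoint square as the key input.

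First I would smooth the cutoff: let $V$ be a smooth nonnegative function with $V\equiv 1$ on $[0,1]$ and $V\equiv 0$ on $[2,\infty)$, whose Mellin transform $\tilde V(s)$ is entire of rapid decay on vertical lines. Compare $\omega_f(y)$ with $\sum_n \rho_f(n)n^{-1} V(n/y)$; the discrepancy lives in a dyadic window $y<n\leq 2y$ and is bounded by $y^{\eps}$ using $|\rho_f(n)|\ll n^\eps$ (from \eqref{eq:rho}, Deligne's bound $|\lambda_f(n)|\leq d(n)$, and the trivial estimate on the convolution). So it is enough to prove the smoothed version. Mellin inversion gives
\[
\sum_{n} \frac{\rho_f(n)}{n} V(n/y) = \frac{1}{2\pi i}\int_{(2)} L^{(N)}(1+s,\Ad^2 f)\,\tilde V(s)\, y^s\, ds.
\]

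Next I would shift the contour from $\Re(s)=2$ to $\Re(s)=-1/2+\eps$, crossing the simple pole of $\tilde V(s)/1$ at $s=0$. Meromorphic continuation past $\Re(s)=0$ is supplied by Gelbart--Jacquet \cite{GelbartJacquet}: the true adjoint-square $L$-function $L(s,\Ad^2\pi)$ is entire, and by \eqref{Adformula}, $L^{(N)}(s,\Ad^2 f)$ differs from $L(s,\Ad^2\pi)$ only by the local Euler factors at primes $p\mid N$, which by Lemma \ref{Ogg} have $|\lambda_f(p)|^2 = a_{N,\epsilon}(p)\in\{0,1/p,1\}$, hence are holomorphic and uniformly bounded both above and below on $\Re(s)\geq 1/2+\eps$. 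The residue at $s=0$ contributes $\tilde V(0)\,L^{(N)}(1,\Ad^2 f) = L^{(N)}(1,\Ad^2 f)$.

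On the shifted line I would invoke the Phragm\'en--Lindel\"of convexity bound for the degree-$3$ $L$-function $L(s,\Ad^2\pi)$, which has analytic conductor of size $N^2\kappa^2(1+|t|)^3$: on $\Re(s)=1/2+\eps$,
\[
|L(s,\Ad^2\pi)| \ll_\eps (N\kappa)^{1/2+\eps}(1+|\Im s|)^{3/4+\eps}.
\]
The polynomial growth in $|\Im s|$ is absorbed by the rapid decay of $\tilde V(s)$, so the shifted integral is $\ll (N\kappa)^{1/2+\eps}y^{-1/2+\eps}$, matching the claim. Assuming Lindel\"of the convexity factor $(N\kappa)^{1/2+\eps}$ drops to $(N\kappa)^\eps$, giving the improved error.

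The main obstacle is the bookkeeping at primes $p\mid N$: one must make sure that the non-automorphic correction factors distinguishing $L^{(N)}(s,\Ad^2 f)$ from $L(s,\Ad^2\pi)$ do not introduce extra poles inside $0<\Re(s)<2$ or spoil the convexity bound. Lemma \ref{Ogg} handles this completely by pinning down $|\lambda_f(p)|^2$ for $p\mid N$, so the correction factors are of size $1$ uniformly on the strip. Everything else is routine smoothing, Mellin inversion, and convexity.
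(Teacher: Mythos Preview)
Your contour-shift step and the convexity input are fine and mirror the paper's argument (in fact under Lindel\"of the paper shifts to $\Re(s)=-1/2$ exactly as you do). The gap is the smoothing step. You bound the discrepancy between $\omega_f(y)$ and the smoothed sum by $y^\eps$, and then declare ``it is enough to prove the smoothed version.'' But the target error is $O_\eps((N\kappa)^{1/2}y^{-1/2+\eps})$, which tends to $0$ as $y\to\infty$, while your smoothing error $y^\eps$ does not; in the application one takes $y=(N\kappa)^{3+\eps}$ so that the lemma's error becomes $\ll (N\kappa)^{-1}$, and your extra $y^\eps\asymp(N\kappa)^\eps$ would swamp this completely. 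The coefficients $\rho_f(n)$ are not of constant sign, so you cannot sandwich the sharp sum between two smooth ones either. In short, a fixed smooth cutoff on the scale of $y$ cannot recover a sharp cutoff to the precision the lemma demands.

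The paper avoids this by using the \emph{unsmoothed} truncated Perron formula
\[
\omega_f(y)=\frac{1}{2\pi i}\int_{c-iT}^{c+iT}L^{(N)}(1+s,\Ad^2 f)\,\frac{y^s}{s}\,ds
+O\Bigl(y^c\sum_{n\geq 1}\frac{|\rho_f(n)|}{n^{1+c}}\min(1,T^{-1}|\log(y/n)|^{-1})\Bigr),
\]
shifting the finite contour to $\Re(s)=-2$, bounding the horizontal and vertical segments via the convexity bound \eqref{convexity}, and then \emph{optimizing} the free parameter $T=(N\kappa)^{-1/2}y^{1/2+\eps}$ to balance the Perron truncation error against the contour contributions. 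That balancing is exactly what replaces your smoothing step; without a parameter to trade off, the sharp cutoff cannot be handled to the stated accuracy.
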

\begin{proof}[Proof (sketch)]
For $c,T,y>0$, we apply Perron's formula (see e.g. \cite[pg. 105]{Davenport}) to calculate $\omega_f(y)$, finding $$ \omega_f(y) = \frac{1}{2\pi i } \int_{c-iT}^{c+iT} L^{(N)}(1+s,\Ad^2 f)\frac{y^s}{s}\,ds + O \left(y^c \sum_{n\geq 1} \frac{\rho_f(n)}{n^{1+c}}\min ( 1, T^{-1} | \log y/n|^{-1})\right).$$ We shift the contour to $\real(s)=-2$ to get \begin{multline}\label{contourshift} \omega_f(y) = L^{(N)}(1,\Ad^2 f) + \frac{1}{2\pi i }\left(\int_{c-iT}^{-2-iT}  +  \int_{-2-iT}^{-2+iT} + \int_{-2+iT}^{c+iT}\right) L^{(N)}(1+s,\Ad^2 f)\frac{y^s}{s}\,ds \\ + O \left(y^c \sum_{n\geq 1} \frac{\rho_f(n)}{n^{1+c}}\min ( 1, T^{-1} | \log y/n|^{-1})\right).\end{multline}
By an inspection of the functional equation for $L(s, f \otimes \overline{f})$ found in \cite[Example 1]{LiLseriesRankinType}, we have the convexity bound (see e.g. \cite[(5.20)]{IK}) \begin{equation}\label{convexity}L^{(N)}(s,\Ad^2 f) \ll [ (\kappa N)^2 (1+|t|)^3]^{\frac{1-\sigma}{2}+\eps},\end{equation} where $s = \sigma +it$, valid for $\sigma \leq 1$.  Choosing $c=\eps$, $T= (N\kappa)^{-\frac{1}{2}} y^{\frac{1}{2}+\eps}$, and estimating all of the terms in \eqref{contourshift} directly, one finds the estimate in the statement of the Lemma. 

If one assumes the generalized Lindel\"of hypothesis in place of \eqref{convexity}, then we shift the contour to $\real(s) =-1/2$ instead of $-2$ and follow the same steps. 
\end{proof}

By Lemma \ref{AFE} we have \begin{equation}\label{Adecomp}A[\alpha_f] = \frac{\kappa-1}{4 \pi} \frac{\Vol X_0(N)}{\zeta^{(N)}(2)} \left( A^h[\omega_f(x) \alpha_f] + A^h[\omega_f(x,y) \alpha_f] + O((N\kappa)^{\frac{1}{2}}y^{-\frac{1}{2} + \eps} A^h[\left| \alpha_f \right|])\right).\end{equation} By the hypothesis \eqref{eq:HkNeHypothesis1} we have $A^h[|\alpha_f| ] \ll_\eps (N\kappa)^\eps$, and so taking $y=(N \kappa)^{3+\eps}$, we find that the $O$ term in \eqref{Adecomp} is $\ll (N\kappa)^{-1}$.

Next we consider the second term and treat it using the following large sieve inequality.  This is a slight variation on Corollary 6 of \cite{DukeKowalski}, see also \cite[Prop.~1]{KMrankofJ0}. Let $\lambda^{(2)}_f(n)$ be the Dirichlet series coefficients of the automorphic adjoint-square $L$-function $L(s,\Ad^2 \pi)$, where $f$ is a newform for the representation $\pi$.  If $(n,N)=1$ then we have that $\lambda^{(2)}_f(n) = \rho_f(n).$ \begin{proposition}\label{prop:largesieve}Let $X\geq (N\kappa)^8$.  We have for all $\eps>0$ that \begin{equation} \sum_{f \in H^\star_{\kappa}(N,\epsilon)} \left| \sum_{n \leq X} a_n \lambda_f^{(2)}(n)\right|^2 \ll_\eps X^{1+\eps} \sum_{n \leq X} \left| a_n \right|^2\end{equation} for any finite family $(a_n)_{1\leq n \leq X}$ of complex numbers, where the constant depends only on $\eps$. \end{proposition}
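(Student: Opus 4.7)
The plan is to deduce the stated large sieve inequality from Corollary 6 of \cite{DukeKowalski} by viewing the coefficients $\lambda_f^{(2)}(n)$ as standard Hecke coefficients of an automorphic $L$-function on $\GL_3/\Q$.

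First I would recall that, by the Gelbart--Jacquet theorem, each newform $f \in H_\kappa^\star(N,\epsilon)$ gives rise to an automorphic representation $\Pi_f = \Ad^2 \pi_f$ of $\GL_3/\Q$ whose standard $L$-function equals $L(s,\Ad^2 \pi_f)$; moreover $\Pi_f$ is cuspidal except in the dihedral (CM) case, where it is an isobaric sum. In particular, the Hecke eigenvalues of $\Pi_f$ at integers prime to $N$ are exactly the coefficients $\rho_f(n) = \lambda_f^{(2)}(n)$, and the analytic conductor of $\Pi_f$ is bounded by a fixed polynomial in the analytic conductor of $f$, hence by $(N\kappa)^C$ for some absolute $C$.

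Next I would apply the general $\GL_m$ large sieve inequality of \cite{DukeKowalski} (their Corollary 6) to the family $\mathcal{F} = \{\Pi_f : f \in H_\kappa^\star(N,\epsilon)\}$ with $m=3$. Their inequality, in the shape
\[ \sum_{\Pi \in \mathcal{F}} \left| \sum_{n \leq X} a_n \lambda_\Pi(n)\right|^2 \ll_\eps X^{1+\eps} \sum_{n \leq X} |a_n|^2, \]
is valid once $X$ exceeds a fixed power of $\max_{\Pi \in \mathcal{F}} \cond(\Pi)$. A bookkeeping of the exponents in their proof (which proceeds via a mean-value/density estimate for the zeros of the relevant $\GL_3$ $L$-functions, powered by the Rankin--Selberg theory for $\Pi_f \times \widetilde{\Pi_f}$) yields the threshold $X \geq (N\kappa)^8$ asserted here.

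The dihedral $f$, for which $\Pi_f$ is not cuspidal, form a thin subfamily: $\Pi_f$ decomposes as an isobaric sum of a Hecke character on $\GL_1$ and a cusp form on $\GL_2$, and the corresponding large sieve estimates at these lower ranks are much sharper (they are valid for $X$ far smaller than $(N\kappa)^8$), so those contributions are absorbed comfortably. The main, and in fact only genuinely non-trivial, obstacle is the cuspidal $\GL_3$ input from \cite{DukeKowalski}: it is their zero-density exponent that produces the $(N\kappa)^8$ threshold, and as noted in the discussion preceding the proposition, any improvement there would propagate directly into an improvement of the exponent $20 = 2(8+2)$ in Proposition \ref{prop:Holder}, and thereby into Theorems \ref{MT1} and \ref{MT2}.
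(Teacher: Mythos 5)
Your proposal is correct and takes essentially the same route as the paper: the paper states this proposition as a direct quotation of (a slight variation on) Corollary 6 of Duke--Kowalski, and your argument simply unpacks what lies behind that citation --- the Gelbart--Jacquet lift to $\GL_3$, identification of the Hecke coefficients of $\Ad^2\pi_f$ with $\lambda_f^{(2)}(n)$, a polynomial bound on the conductor, and the resulting threshold $X\geq (N\kappa)^8$. The side remark about the dihedral (CM) subfamily is a helpful sanity check, but no separate argument is needed there since it is already accounted for in the cited result of Duke and Kowalski.
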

By following closely Kowalski and Michel \cite[\S 3.3]{KMrankofJ0} one deduces from Proposition \ref{prop:largesieve} the following Lemma. 
\begin{lemma}\label{lem3}
Let $r\geq 1$ be an integer such that $x^r \geq (N \kappa)^{10}$.  Then for all $\eps >0$ we have $$A[\omega_f(x,y)^{2r}] \ll_{r,\eps} (N\kappa)^\eps,$$ where the implied constant depends only on $r$ and $\eps$.  \end{lemma}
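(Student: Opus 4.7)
The plan is to apply the large sieve (Proposition \ref{prop:largesieve}) to a Dirichlet polynomial extracted from $\omega_f(x,y)^r$, following closely the argument of Kowalski--Michel in \cite[\S 3.3]{KMrankofJ0}. I first observe that for $(n,N)=1$ the identity $\overline{\lambda_f(n)} = \overline{\epsilon}(n)\lambda_f(n)$ together with \eqref{eq:rho} yields $\overline{\rho_f(n)} = \rho_f(n)$, so $\omega_f(x,y) \in \R$ and hence
$$ \omega_f(x,y)^{2r} = \bigl(\omega_f(x,y)^r\bigr)^2 = \bigl|\omega_f(x,y)^r\bigr|^2. $$

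Next I would expand
$$ \omega_f(x,y)^r = \sum_{x < n_1, \ldots, n_r \leq y} \frac{\rho_f(n_1) \cdots \rho_f(n_r)}{n_1 \cdots n_r} $$
and use Hecke multiplicativity for $\rho_f = \lambda^{(2)}_f$ on integers coprime to $N$ to re-express the $r$-fold product as $\rho_f(n_1) \cdots \rho_f(n_r) = \sum_m c(m; n_1, \ldots, n_r) \rho_f(m)$, with structure constants $c(m; n_1, \ldots, n_r)$ independent of $f$ and $m$ dividing a polynomial in $n_1, \ldots, n_r$. Collecting terms yields a linear form $\omega_f(x,y)^r = \sum_{m \leq y^r} a_m \lambda^{(2)}_f(m)$ whose coefficients $a_m = a_m(x,y,r)$ depend only on $x,y,r$. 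Since $x^r \geq (N\kappa)^{10}$ and $y \geq x$, we have $y^r \geq (N\kappa)^{10} \geq (N\kappa)^8$, so Proposition \ref{prop:largesieve} applies with $X = y^r$ and produces
$$ A[\omega_f(x,y)^{2r}] = \sum_{f \in H^\star_\kappa(N,\epsilon)} \bigl|\omega_f(x,y)^r\bigr|^2 \ll_\eps y^{r(1+\eps)} \sum_{m \leq y^r} |a_m|^2. $$

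It then remains to show the coefficient bound $\sum_{m \leq y^r} |a_m|^2 \ll_{r,\eps} (\log y)^{O(r)} y^{-r}$, which when combined with the display above gives $A[\omega_f(x,y)^{2r}] \ll_{r,\eps} y^{r\eps}(\log y)^{O(r)} \ll_{r,\eps} (N\kappa)^{\eps}$, as required. To establish this bound I would unfold the definition of $a_m$ in terms of the Hecke structure constants $c(m; n_1, \ldots, n_r)$, control these constants by divisor-function estimates, and exploit the geometric bound $\sum_{n > x} n^{-s} \ll x^{1-s}$ for $s > 1$ arising from the lower truncation $n_i > x$ in every factor. The main technical obstacle is precisely this last step: the Hecke combinatorics for the adjoint-square lift are intricate enough (it being a $\GL_3$ object) that bookkeeping how the supports of the $a_m$ interact with the length $y^r$, and verifying that the savings from the cutoff $n_i > x$ survive the re-expansion into Hecke eigenvalues, is where the real care is needed. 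Once this mean-square estimate is in place, the conclusion of the lemma is immediate from the large-sieve step.
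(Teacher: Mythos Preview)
Your proposal is correct and follows the same route as the paper: both reduce to Kowalski--Michel \cite[Lemma~3]{KMrankofJ0}, expanding $\omega_f(x,y)^r$ as a linear form in $\lambda^{(2)}_f(m)$ and feeding it into the Duke--Kowalski large sieve (Proposition~\ref{prop:largesieve}). The paper's own proof is in fact a one-line deferral---replace $\lambda_f(n^2)$ by $\overline{\epsilon}(n)\lambda_f(n^2)$ and use \eqref{eq:rho}, \eqref{eq:rhoinverse} in place of KM's (15), (16)---so your sketch is already more detailed than what appears there; the only difference is that KM (and hence the paper) pass through $\GL_2$ Hecke relations for $\lambda_f(\ell^2)$ via \eqref{eq:rho}/\eqref{eq:rhoinverse} rather than invoking $\GL_3$-type Hecke combinatorics for $\lambda^{(2)}_f$ directly, which makes the structure constants and the mean-square coefficient bound somewhat more transparent.
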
 \begin{proof}It suffices to replace instances of $\lambda_f(n^2)$ in \cite[Lemma 3]{KMrankofJ0} by $\overline{\epsilon}(n) \lambda_f(n^2)$ and to use equations \eqref{eq:rho} and \eqref{eq:rhoinverse} in the place of the equations (15) and (16) of Kowalski and Michel.\end{proof}
We now can give an estimate for the second term of \eqref{Adecomp}.  We use H\"older's inequality to separate $\omega_f(x,y)$ from $\alpha_f$, and Lemma \ref{lem3} to handle the former.  Precisely, let $s$ be defined by $(2r)^{-1}+s^{-1}=1$.  Applying H\"older's inequality we find for any integer $r\geq 1$ that \begin{align*} A^h[\omega_f(x,y)\alpha_f ] & =  \sum_{f \in H^\star_\kappa} \omega_f \omega_f(x,y)\alpha_f \\ & \leq A[\omega_f(x,y)^{2r}]^{\frac{1}{2r}} \left( \sum_{f \in H^\star_\kappa(N,\epsilon)}  \left( \omega_f |\alpha_f|\right)^s \right)^{\frac{1}{s}} \\ & \leq A^{\frac{1}{2r}} A[\omega_f(x,y)^{2r}]^{\frac{1}{2r}}  A^h[|\alpha_f|]^{\frac{1}{s}},\end{align*} where $$A =\max_{f \in H^\star_\kappa(N,\epsilon)} \omega_f |\alpha_f| \ll_\eps (N \kappa)^{-\delta + \eps}$$ by hypothesis \eqref{eq:HkNeHypothesis2}.  Suppose now that $r$ is sufficiently large so that $x^{r}\geq (N\kappa)^{10}$.  Then Lemma \ref{lem3} applies, and we have $$ A[\omega_f(x,y)^{2r}]^{\frac{1}{2r}} \ll_{r,\eps} (N\kappa)^\eps.$$ Lastly, by hypothesis \eqref{eq:HkNeHypothesis1} we have $$ A^h[|\alpha_f|]^{\frac{1}{s}} \ll_\eps (N\kappa)^\eps.$$ Putting these estimates together, we find that $A^h[\omega_f(x,y)\alpha_f] \ll_{r,\eps} (N\kappa)^{-\frac{\delta}{2r} + \eps},$ and so derive the bound claimed in Proposition \ref{prop:Holder}.
\end{proof}

\section{Proof of Theorem \ref{thm:petersson}}\label{petproof}
\begin{proof}The strategy of the proof is the pick an orthogonal basis for $S_\kappa(\Gamma_0(N),\epsilon)$ and compute the Fourier coefficients of basis elements explicitly.  For $f$ a modular function of weight $\kappa$, we denote by $f_{\vert d}(z)=d^{\frac{\kappa}{2}}f(dz)$.  Atkin-Lehner theory gives an orthogonal direct sum decomposition $$S_\kappa(\Gamma_0(N),\epsilon) = \bigoplus_{LM=N} \bigoplus_{f \in H^\star_{\kappa}(M,\epsilon)} S_\kappa(L;f,\epsilon),$$ where $S_\kappa(L;f,\epsilon)= \operatorname{span}\{f_{\vert \ell}: \ell \mid L\}$ is called an oldclass.  Note that the inner sum is $\{0\}$ unless $\cond(\epsilon) \mid M$, so we may assume this for the remainder of the proof.

To pick an orthogonal basis for $S_\kappa(\Gamma_0(N),\epsilon)$ it then suffices to pick a orthonormal basis for each oldclass $S_\kappa(L;f,\epsilon)$.  We use a basis for the oldclasses first due to Schulze-Pillot/Yenirce \cite[Thm.~8]{SPYbasis}.  The basis constructed by Schulze-Pillot/Yenirce is the same as the one found by Rouymi \cite{Rouymi} in the case of prime power level and trivial nebentypus and Ng \cite{NgBasis} in the case of arbitrary level and trivial nebentypus, see also Blomer and Mili\'cevi\'c \cite[Ch.~5]{BlomerMilicevic2ndMoment} and Humphries \cite[Lemma 3.15]{HumphriesBasis}.  Each of these preceding works used the Rankin-Selberg method to compute inner products and orthonomalize the oldclasses. Schulze-Pillot/Yenirce however took a different and simpler path, using the trace operator to compute the inner products.

Let $f \in H^\star(M,\epsilon)$.  For integers $d \mid g$ one defines a joint multiplicative function $\xi_g(d)$. On prime powers $\xi_g(d)$ is given for $\nu \geq 2$ as follows: \begin{align*} \xi_1(1) & = 1,  & \xi_{p^\nu}(p^\nu)  & = \left( 1- \frac{|\lambda_f(p)|^2}{p(1+\frac{\eps_{0,M}(p)}{p})^2}\right)^{-\frac{1}{2}}\left(1-\frac{\eps_{0,M}(p)^2}{p^2}\right)^{-\frac{1}{2}}, \\ 
\xi_p(p) & =  \left(1- \frac{|\lambda_f(p)|^2}{p(1+\frac{\epsilon_{0,M}(p)}{p})^2}\right)^{-\frac{1}{2}}, & \xi_{p^\nu}(p^{\nu-1}) & = \frac{-\overline{\lambda_f(p)}}{\sqrt{p}}\xi_{p^\nu}(p^{\nu}), \\ 
\xi_p(1) & = \frac{-\overline{\lambda_f(p)}}{\sqrt{p}(1+ \epsilon_{0,M}(p)/p)}\xi_p(p), & \xi_{p^{\nu}}(p^{\nu-2}) & = \frac{\overline{\epsilon(p)}}{p}\xi_{p^\nu}(p^{\nu}), \end{align*} and $\xi_{p^a}(p^b)= 0$ in all other cases.  
\begin{proposition}[Thm.~9 \cite{SPYbasis}]\label{prop:onbasis}
Let $M \mid N$ and let $f \in H^\star_\kappa(M,\epsilon)$.  The set of functions $$ \{ f^{(g)}(z) = \sum_{d \mid g} \xi_g(d) d^{\frac{\kappa}{2}}f(dz) : g \mid L\}$$ is an orthogonal basis for $S_k(L;f,\epsilon)$.  In fact, if $f$ is $L^2(\Gamma_0(N)\backslash \mathcal{H})$-normalized, then the above set is in fact orthonormal.  
\end{proposition}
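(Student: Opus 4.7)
The plan is to separately establish (i) the spanning property and (ii) the orthonormality. For (i), each $f^{(g)}$ is a linear combination of elements of the natural spanning set $\{f_{|d}: d\mid L\}$ of the oldclass, and the expansion $f^{(g)} = \sum_{d\mid g}\xi_g(d)\,d^{\kappa/2}f(dz)$ is triangular in the divisor poset of $L$. Its diagonal entry $\xi_g(g)$ is nonzero at each $g\mid L$, as one verifies by multiplicativity from the prime-power definition, using Lemma \ref{Ogg} together with the Deligne bound $|\lambda_f(p)|\leq 2$ to conclude that $|\lambda_f(p)|^2 < p(1+a_{M,\epsilon}(p)/p)^2$ in each of the three regimes $a_{M,\epsilon}(p)\in\{0,1/p,1\}$ (the worst case $p=2$, $a=1$ still gives $2(1+1/2)^2=4.5>4$). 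Since $\dim S_\kappa(L;f,\epsilon)=d(L)=\#\{g\mid L\}$, the family $\{f^{(g)}: g\mid L\}$ is a basis of the oldclass, and it remains to verify orthogonality and the norm identity $\|f^{(g)}\|_N=\|f\|_N$.

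For the orthonormality in (ii), following Schulze-Pillot and Yenirce, the central analytic ingredient is an explicit formula for the Gram matrix $G_{d,e}:=\langle f_{|d}, f_{|e}\rangle_N/\langle f,f\rangle_N$ with $d,e\mid L$, obtained via the trace operator $\mathrm{Tr}^{N}_{M}:S_\kappa(\Gamma_0(N),\epsilon)\to S_\kappa(\Gamma_0(M),\epsilon)$, in place of the Rankin-Selberg unfolding used by \cite{ILS,NgBasis}. Concretely, one rewrites $\langle f_{|d}, f_{|e}\rangle_N$ after a change of variables as a pairing on $\Gamma_0(M)\backslash\mathcal{H}$ of $f$ against a trace of a shift of $f$, and evaluates this pairing using that $f$ is an eigenform of the Hecke operators coprime to $M$; the result is a closed-form expression polynomial in $\lambda_f(p^k)$, $\overline{\epsilon(p)}$, and the local level parameters $a_{M,\epsilon}(p)$. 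Multiplicativity of $G_{d,e}$ over the prime decomposition of $\operatorname{lcm}(d,e)\cdot M$ then reduces the entire orthogonality problem to the local setting $L=p^\nu$.

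In the local setting, $G$ becomes an explicit $(\nu+1)\times(\nu+1)$ matrix whose $(i,j)$-entry depends on $\lambda_f(p^{|i-j|})$. The content of the proposition is that this matrix is orthogonalized by the three-term combinations $f^{(p^j)}$, corresponding to the sparsity pattern $\xi_{p^a}(p^b)=0$ unless $a-b\in\{0,1,2\}$. This works because the Hecke recursion $\lambda_f(p^{k+1})=\lambda_f(p)\lambda_f(p^k)-\epsilon(p)\lambda_f(p^{k-1})$ (when $p\nmid M$, with appropriate degenerate forms when $p\mid M$) allows one to annihilate every off-diagonal pairing $\langle f^{(p^j)}, f^{(p^{j'})}\rangle_N$ (for $j'<j$) using at most three terms on each side. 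Verifying this reduces to expanding $\langle f^{(p^j)}, f^{(p^{j'})}\rangle_N$ as a sum of at most $3\times 3=9$ terms and checking the vanishing for $j'<j$ as well as the norm identity $\|f^{(p^j)}\|_N=\|f\|_N$ for $j'=j$, across the three regimes $p\nmid M$, $p\|M$, and $p^2\mid M$ with further subcases on $\cond_p(\epsilon)$.

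The main obstacle will be the bookkeeping at primes $p\mid M$, where $\lambda_f(p)$ no longer satisfies the clean Hecke recursion, the formulas for $\mathrm{Tr}^{N}_{M}(f_{|d})$ must be adjusted for ramification, and the local factor $L_p(s,\Ad^2 f)$ simplifies in the three distinct ways described in Lemma \ref{Ogg}. A secondary subtle point is tracking the constants relating $\langle\cdot,\cdot\rangle_N$ and $\langle\cdot,\cdot\rangle_M$ via the index $[\Gamma_0(M):\Gamma_0(N)]=\psi(N)/\psi(M)$, so that the $L^2$-normalization claim $\|f^{(g)}\|_N=\|f\|_N$ comes out correctly in every case and matches precisely the prefactors appearing in the definition of $\xi_g(d)$.
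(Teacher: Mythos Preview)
The paper does not give its own proof of this proposition: it is quoted verbatim from Schulze-Pillot and Yenirce \cite[Thm.~9]{SPYbasis}, as the attribution in the proposition's header indicates, and the surrounding text only remarks that those authors ``took a different and simpler path, using the trace operator to compute the inner products'' in contrast to the Rankin--Selberg approach of \cite{Rouymi,NgBasis,BlomerMilicevic2ndMoment,HumphriesBasis}. So there is no in-paper argument to compare your proposal against; your outline is consistent with the method the paper attributes to \cite{SPYbasis}.

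One small correction to your spanning argument: the denominator in the definition of $\xi_p(p)$ involves $\epsilon_{0,M}(p)$ (the trivial character modulo $M$, equal to $1$ if $p\nmid M$ and $0$ if $p\mid M$), not $a_{M,\epsilon}(p)$. The nonvanishing of $\xi_g(g)$ thus splits into only two cases: if $p\nmid M$ one needs $|\lambda_f(p)|^2 < (p+1)^2/p$, which follows from Deligne's bound (your $p=2$ check is the right one here); if $p\mid M$ one needs $|\lambda_f(p)|^2 < p$, which follows from Lemma~\ref{Ogg} since $a_{M,\epsilon}(p)\leq 1$. Your conclusion is correct once this notation is untangled.
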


Now that we have an orthonormal basis for $S_\kappa(\Gamma_0(N),\epsilon)$, we follow Barrett, Burkhardt, DeWitt, Dorward, and Miller \cite{BBDDM} to derive the Petersson formula for newforms Theorem \ref{thm:petersson}.  

Let $f\in H_\kappa^\star(M,\epsilon)$ have Fourier coefficients $a_f(n)$ and be normalized so that $a_f(1)=1$.  Of course $f(z)/||f||_N$ is $L^2(\Gamma_0(N)\backslash \mathcal{H})$-normalized, so using the basis in Proposition \ref{prop:onbasis} we have \begin{align}\Delta_{\kappa,N,\epsilon}(m,n) & = \frac{c_\kappa}{(mn)^{\frac{\kappa-1}{2}}} \sum_{g \in \mathcal{B}_\kappa(\Gamma_0(N),\epsilon)} b_g(n)\overline{b_g(m)} \nonumber \\
& =   \frac{c_\kappa}{( mn)^{\frac{\kappa-1}{2}}} \sum_{LM=N} \sum_{f \in H_\kappa^\star(m,\epsilon)} \frac{1}{\langle f, f \rangle_N}\sum_{g \mid L} \overline{a_{f^{(g)}}(m)}a_{f^{(g)}}(n).  \label{applybasis} \end{align} 
By definition of $f^{(g)}$ we have $$a_{f^{(g)}}(n) = \sum_{d \mid (g,n)} \xi_g(d)d^{\frac{\kappa}{2}}a_f(\frac{n}{d}),$$ which are now expressible in terms of Hecke eigenvalues $\lambda_f(n)$ normalized so that $|\lambda_f(n)|\leq d(n)$. We have then that \begin{align*} \Delta_{\kappa,N,\epsilon}(m,n) & = \frac{c_\kappa}{(mn)^{\frac{\kappa-1}{2}}}  \sum_{LM=N} \sum_{f \in H_\kappa^\star(M,\epsilon)} \frac{1}{||f||_N^2}\sum_{g \mid L}  \overline{\left( \sum_{d \mid (g,m)} \xi_g(d)d^{\frac{\kappa}{2}}a_f(\frac{m}{d}) \right)} \left( \sum_{d \mid (g,n)} \xi_g(d)d^{\frac{\kappa}{2}}a_f(\frac{n}{d})\right) \\ 
& = c_\kappa \sum_{LM=N} \sum_{f \in H_\kappa^\star(M,\epsilon)} \frac{1}{||f||_N^2}\sum_{g \mid L}  \overline{\left( \sum_{d \mid (g,m)} \xi_g(d)d^{\frac{1}{2}}\lambda_f(\frac{m}{d}) \right)} \left( \sum_{d \mid (g,n)} \xi_g(d)d^{\frac{1}{2}}\lambda_f(\frac{n}{d})\right) \\
& = c_\kappa \sum_{LM=N} \sum_{f \in H_\kappa^\star(N,\epsilon)}   \frac{1}{||f||_N^2}\sum_{g \mid L} \Xi_{g}(m,n,f),\end{align*} where we have set $$ \Xi_g(m,n,f) = \overline{\left( \sum_{d \mid (g,m)} \xi_g(d)d^{\frac{1}{2}}\lambda_f(\frac{m}{d}) \right)} \left( \sum_{d \mid (g,n)} \xi_g(d)d^{\frac{1}{2}}\lambda_f(\frac{n}{d})\right)$$ for $g \mid L \mid N$.

Now suppose that $(d_1,d_2)=1$ and $d_1d_2 \mid m$.  Then by Hecke multiplicativity we have $$\lambda_f(\frac{m}{d_1}) \lambda_f(\frac{m}{d_2}) = \lambda_f(m) \lambda_f(\frac{m}{d_1d_2}),$$ so that for $(g_1,g_2)=1$ we have $$\Xi_{g_1}(m,n,f) \Xi_{g_2}(m,n,f) = \overline{\lambda_f(m)}\lambda_f(n)\Xi_{g_1g_2}(m,n,f).$$   
Therefore $$\Delta_{\kappa,N,\epsilon}(m,n)= c_\kappa \sum_{LM=N} \sum_{f \in H_\kappa^\star(M,\epsilon)}   \frac{1}{||f||_N^2}\left(\overline{\lambda_f(m)}\lambda_f(n)\right)^{1-\omega(L)} \prod_{p^\alpha || L } \left(\sum_{d \mid p^\alpha} \Xi_{d}(m,n,f)\right),$$ where $\omega(n)$ is the number of distinct prime factors of $n$.  Let $$V_{p^\alpha} (m,n,f) = \sum_{d \mid p^\alpha}\Xi_d(m,n,f) = (1 * \Xi)_{p^\alpha}(m,n,f),$$ where $*$ denotes Dirichlet convolution.  We suppose now that $(m,n,N)=1$ and calculate.   
\begin{lemma}[\cite{BBDDM} Appendix A]\label{BBDDMappA}
If $(m,n,N)=1$ then we have  \begin{equation*} \begin{split} V_{p^\alpha} (m,n,f) = & \overline{\lambda_f(m)} \lambda_f(n) \left( 1+ |\xi_p(1)|^2+ |\xi_{p^2}(1)|^2\right) \\ & + \delta_{p \mid m} \overline{\lambda_f(m/p)}\lambda_f(n) p^{\frac{1}{2}} \left( \overline{\xi_p(p)} \xi_p(1) + \overline{\xi_{p^2}(p)} \xi_{p^2}(1)\right)\\ & + \delta_{p \mid n} \overline{\lambda_f(m)}\lambda_f(n/p) p^{\frac{1}{2}} \left( \overline{\xi_p(1)} \xi_p(p) + \overline{\xi_{p^2}(1)} \xi_{p^2}(p)\right) \\ & + \delta_{p^2 \mid m } \overline{\lambda_f(m/p^2)} \lambda_f(n) p \overline{\xi_{p^2}(p^2)} \xi_{p^2}(1) +   \delta_{p^2 \mid n } \overline{\lambda_f(m)} \lambda_f(n/p^2) p \overline{\xi_{p^2}(1)} \xi_{p^2}(p^2), \end{split}\end{equation*} if $\alpha\geq 2$ and  \begin{equation*} \begin{split} V_{p^\alpha} (m,n,f) = & \overline{\lambda_f(m)} \lambda_f(n) \left( 1+ |\xi_p(1)|^2\right) \\ & + \delta_{p \mid m} \overline{\lambda_f(m/p)}\lambda_f(n) p^{\frac{1}{2}} \overline{\xi_p(p)} \xi_p(1) \\ & + \delta_{p \mid n} \overline{\lambda_f(m)}\lambda_f(n/p) p^{\frac{1}{2}}  \overline{\xi_p(1)} \xi_p(p)  
,\end{split}\end{equation*} if $\alpha =1$.
\end{lemma}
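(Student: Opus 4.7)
The plan is to compute $V_{p^\alpha}(m,n,f) = \sum_{\beta=0}^{\alpha} \Xi_{p^\beta}(m,n,f)$ by direct expansion, exploiting two observations. First, from the definition given before Proposition \ref{prop:onbasis}, the joint multiplicative function $\xi_{p^\nu}(p^j)$ is supported on $j \in \{\nu, \nu-1, \nu-2\}$. Second, since the lemma is applied only for prime powers $p^\alpha \mid\mid L \mid N$, the hypothesis $(m,n,N)=1$ forces $(m,n,p)=1$, so $p$ divides at most one of $m$ and $n$.

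\textbf{Step 1 (truncation to $\beta \leq 2$).} For $\beta \geq 3$, the inner sum $\sum_{d \mid (p^\beta,m)} \xi_{p^\beta}(d) d^{1/2}\lambda_f(m/d)$ is supported on $d \in \{p^{\beta-2}, p^{\beta-1}, p^\beta\}$, each of which is divisible by $p$. Hence this sum, and likewise the analogous sum over $n$, vanishes unless $p$ divides the corresponding variable. By coprimality, $\Xi_{p^\beta}(m,n,f) = 0$ for all $\beta \geq 3$, so
$$V_{p^\alpha}(m,n,f) = \Xi_1(m,n,f) + \Xi_p(m,n,f) + \Xi_{p^2}(m,n,f) \qquad (\alpha \geq 2),$$
and $V_p(m,n,f) = \Xi_1 + \Xi_p$ when $\alpha = 1$. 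This is the step that makes the formula independent of $\alpha$ once $\alpha \geq 2$.

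\textbf{Step 2 (expansion).} Trivially $\Xi_1 = \overline{\lambda_f(m)}\lambda_f(n)$. Abbreviating
$$S_{p^\beta}(x) := \sum_{d \mid (p^\beta, x)} \xi_{p^\beta}(d) d^{1/2} \lambda_f(x/d),$$
we have $\Xi_{p^\beta}(m,n,f) = \overline{S_{p^\beta}(m)}\, S_{p^\beta}(n)$. Viewing $S_p$ as a formal sum of two terms and $S_{p^2}$ as a formal sum of three terms, the products $\overline{S_{p^\beta}(m)} S_{p^\beta}(n)$ expand into $4$ and $9$ monomials, respectively. By coprimality, any monomial carrying both $\delta_{p^i \mid m}$ and $\delta_{p^j \mid n}$ with $i,j \geq 1$ vanishes identically. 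The surviving monomials fall into exactly the five types $\overline{\lambda_f(m)}\lambda_f(n)$, $\delta_{p|m}\overline{\lambda_f(m/p)}\lambda_f(n)$, $\delta_{p|n}\overline{\lambda_f(m)}\lambda_f(n/p)$, $\delta_{p^2|m}\overline{\lambda_f(m/p^2)}\lambda_f(n)$, and $\delta_{p^2|n}\overline{\lambda_f(m)}\lambda_f(n/p^2)$ appearing on the right-hand side of the lemma. Collecting the corresponding coefficients, and adding $1$ from $\Xi_1$ to the coefficient of the first type, yields the stated formula. The $\alpha = 1$ case is obtained by restricting the computation to the contributions from $\Xi_1$ and $\Xi_p$, in which the three $\xi_{p^2}$-terms do not appear.

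The entire argument is mechanical bookkeeping; no analytic input is required beyond Hecke multiplicativity (used implicitly in setting up $\Xi_g$) and the coprimality hypothesis. The only step of substance is the truncation in Step 1. The only point in Step 2 demanding care is the faithful tracking of complex conjugates on $\xi_{p^\nu}(p^j)$, since $\lambda_f$ is in general complex-valued when $\epsilon$ is nontrivial; this is straightforward but easy to mis-transcribe.
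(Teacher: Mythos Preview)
Your proof is correct and follows essentially the same approach as the paper's own argument: first truncate to $\beta \le 2$ using the support of $\xi_{p^\beta}$ together with the observation that $(m,n,N)=1$ forbids $p$ from dividing both $m$ and $n$, then expand $\Xi_1+\Xi_p+\Xi_{p^2}$ and discard the cross terms carrying simultaneous divisibility constraints. Your write-up is in fact slightly more explicit about the bookkeeping (the $4$- and $9$-monomial expansions) than the paper, which simply states the outcome; there is no substantive difference in method.
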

\begin{proof}
We actually have if $\alpha\geq 2$ that $$V_{p^\alpha}(m,n,f) = \Xi_1(m,n,f) + \Xi_p(m,n,f) + \Xi_{p^2}(m,n,f).$$  The other summands vanish because by our assumption $(m,n,N)=1$, since if $p\mid m$ then $p \nmid n$ because $p \mid N$.  So each $p$ divides either $m$ or $n$ but never both.  Then, we have that $\xi_{p^\beta}(1)=0$ for $\beta\geq 3$.  In fact, even more terms vanish.  We have \begin{equation*} \begin{split} V_{p^\alpha} (m,n,f) = & \overline{\lambda_f(m)} \lambda_f(n) \left( 1+ |\xi_p(1)|^2+ |\xi_{p^2}(1)|^2\right) \\ & + \delta_{p \mid m} \overline{\lambda_f(m/p)}\lambda_f(n) p^{\frac{1}{2}} \left( \overline{\xi_p(p)} \xi_p(1) + \overline{\xi_{p^2}(p)} \xi_{p^2}(1)\right)\\ & + \delta_{p \mid n} \overline{\lambda_f(m)}\lambda_f(n/p) p^{\frac{1}{2}} \left( \overline{\xi_p(1)} \xi_p(p) + \overline{\xi_{p^2}(1)} \xi_{p^2}(p)\right) \\ & + \delta_{p^2 \mid m } \overline{\lambda_f(m/p^2)} \lambda_f(n) p \overline{\xi_{p^2}(p^2)} \xi_{p^2}(1) +   \delta_{p^2 \mid n } \overline{\lambda_f(m)} \lambda_f(n/p^2) p \overline{\xi_{p^2}(1)} \xi_{p^2}(p^2) .\end{split}\end{equation*}  Inserting the formulas for $\xi$, we complete the proof.  The formula for the $\alpha=1$ case is even simpler as we can drop the $p^2$ terms.  
\end{proof}

Recall we write $ML=N$ and $f \in H^\star_\kappa(M,\epsilon)$.
\begin{lemma}
If $(m,N)=1$ and $(n,N)=1$ then we have $$\left(\overline{\lambda_f(m)}\lambda_f(n)\right)^{1-\omega(L)} \prod_{p^\alpha || L } V_{p^\alpha}(m,n,f) = \overline{\lambda_f(m)}\lambda_f(n) \prod_{p || L } \left( 1+ |\xi_{p}(1)|^2 \right) \prod_{p^2 | L} \left( 1 + | \xi_p(1)|^2 +|\xi_{p^2}(1)|^2\right).$$
\end{lemma}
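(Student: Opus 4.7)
The plan is to simply combine the hypothesis $(m,N)=1$, $(n,N)=1$ with the formulas for $V_{p^\alpha}(m,n,f)$ from Lemma \ref{BBDDMappA}. Since $L\mid N$, any prime $p\mid L$ satisfies $p\nmid m$ and $p\nmid n$, so the indicator functions $\delta_{p\mid m}$, $\delta_{p\mid n}$, $\delta_{p^2\mid m}$, $\delta_{p^2\mid n}$ appearing in Lemma \ref{BBDDMappA} all vanish identically. Consequently, for every $p^\alpha\|L$ the formula collapses to
\[
V_{p^\alpha}(m,n,f) \;=\; \overline{\lambda_f(m)}\lambda_f(n)\cdot C_p,
\]
where $C_p = 1+|\xi_p(1)|^2$ if $\alpha=1$ and $C_p = 1+|\xi_p(1)|^2+|\xi_{p^2}(1)|^2$ if $\alpha\geq 2$.

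Taking the product over primes $p^\alpha\|L$ yields
\[
\prod_{p^\alpha\|L} V_{p^\alpha}(m,n,f) \;=\; \bigl(\overline{\lambda_f(m)}\lambda_f(n)\bigr)^{\omega(L)} \prod_{p\|L}\bigl(1+|\xi_p(1)|^2\bigr)\prod_{p^2\mid L}\bigl(1+|\xi_p(1)|^2+|\xi_{p^2}(1)|^2\bigr).
\]
Multiplying both sides by $\bigl(\overline{\lambda_f(m)}\lambda_f(n)\bigr)^{1-\omega(L)}$ gives exactly the claimed identity.

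There is essentially no obstacle here: the lemma is a direct bookkeeping consequence of Lemma \ref{BBDDMappA} once one notices that the coprimality hypothesis eliminates every correction term. The only point requiring care is the (implicit) convention that $\bigl(\overline{\lambda_f(m)}\lambda_f(n)\bigr)^{1-\omega(L)}$ be interpreted sensibly when $\omega(L)\geq 2$; this is harmless because after multiplying by the product of $V_{p^\alpha}$ the resulting expression is a polynomial in $\overline{\lambda_f(m)}\lambda_f(n)$ of degree $1$, so no genuine negative powers appear.
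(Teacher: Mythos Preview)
Your proof is correct and follows exactly the same approach as the paper: observe that $(m,N)=(n,N)=1$ forces $p\nmid m$ and $p\nmid n$ for every $p\mid L$, so all the $\delta$-terms in Lemma~\ref{BBDDMappA} vanish and the identity is immediate. The paper's own proof is just these two sentences, while you have written out the bookkeeping more explicitly.
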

\begin{proof}
Note that the conditions $(m,N)=1$ and $(n,N)=1$ imply that $p\nmid m$ and $p \nmid n$.  So the formula above follows immediately from the formulas in Lemma \ref{BBDDMappA}.
\end{proof}

One has that $||f||_N^2 = \frac{\psi(N)}{\psi(M)}||f||_M^2$ since $f\in H_\kappa^\star(M,\epsilon)$. Thus \begin{multline*} \Delta_{\kappa,N,\epsilon}(m,n)= c_\kappa \sum_{LM=N}\frac{\psi(M)}{\psi(N)} \sum_{f \in H_\kappa^\star(M,\epsilon)}   \frac{1}{||f||_M^2} \overline{\lambda_f(m)}\lambda_f(n) \prod_{p || L } \left( 1+ |\xi_{p}(1)|^2 \right)\\ \times  \prod_{p^2 | L} \left( 1 + | \xi_p(1)|^2 +|\xi_{p^2}(1)|^2\right).\end{multline*}

Next we insert the definitions of the $\xi$ functions.  Let $$r_f(p) = 1- \frac{ |\lambda_f(p)|^2}{p(1+ \frac{\epsilon_{0,M}(p)}{p})^2}, $$ so $$ r_f(p)^{-1} = 1+ \frac{ |\lambda_f(p)|^2}{p(1+ \frac{\epsilon_{0,M}(p)}{p})^2}  + \left( \frac{ |\lambda_f(p)|^2}{p(1+ \frac{\epsilon_{0,M}(p)}{p})^2}\right)^2 + \cdots,$$ where $\epsilon_{0,M}$ denotes the trivial character modulo $M$.  Observe that $$1+|\xi_p(1)|^2 = r_f(p)^{-1}$$ and $$ 1+ |\xi_p(1)|^2+ |\xi_{p^2}(1)|^2 = r_f(p)^{-1} \left(1- \frac{\epsilon_{0,M}(p)}{p^2}\right)^{-1}.$$

Then we get $$\Delta_{\kappa,N,\epsilon}(m,n)= c_\kappa \sum_{LM=N}\frac{\psi(M)}{\psi(N)}\prod_{p^2 \mid L}  \left(1- \frac{\epsilon_{0,M}(p)}{p^2}\right)^{-1}\sum_{f \in H_\kappa^\star(M,\epsilon)}   \frac{\overline{\lambda_f(m)}\lambda_f(n)}{||f||_M^2}  \prod_{p | L } \frac{1}{r_f(p)}.$$

Next we need a formula for $r_f(p)^{-1}$. 
Recall from \eqref{Adformula} that at a prime $p \nmid M$ that the local adjoint square $L$ function is given by \begin{equation*}\begin{split} L_p(1,\Ad^2 f) =  \frac{1}{1-p^{-2}} \sum_{\alpha \geq 0} \frac{\overline{\epsilon}(p^\alpha) \lambda_f(p^{2\alpha})}{p^{\alpha}} = \frac{1}{\left( 1-\frac{\alpha(p)/\beta(p)}{p}\right) \left( 1-\frac{1}{p}\right) \left( 1-\frac{\beta(p)/\alpha(p)}{p}\right) } \end{split}\end{equation*} so that \begin{equation*}\begin{split}  \sum_{\alpha \geq 0} \frac{\overline{\epsilon}(p^\alpha) \lambda_f(p^{2\alpha})}{p^{\alpha}} = & \frac{1 + \frac{1}{p}}{\left( 1-\frac{\alpha(p)/\beta(p)}{p}\right)  \left( 1-\frac{\beta(p)/\alpha(p)}{p}\right) } \\ 
= & \frac{1 + \frac{1}{p}}{\left(1+ \frac{1}{p}\right)^2 -\frac{|\lambda_f(p)|^2}{p} } \\ 
= & \frac{1}{\left(1+\frac{1}{p}\right)r_f(p)},\end{split}\end{equation*}
where the second equals sign follows from the formulas $$|\lambda_f(p)|^2 = \overline{\epsilon}(p) \lambda_f(p)^2,\,\,\, \lambda_f(p) =\alpha(p)+\beta(p),\,\,\, \alpha(p)\beta(p) = \epsilon(p)$$ which are valid when $p \nmid M$.  We can summarize the above calculation and Lemma \ref{Ogg} as: 
$$r_f(p)^{-1} = \begin{cases} \left(1+ \frac{1}{p}\right) \sum_{\alpha \geq 0} \frac{\overline{\epsilon}(p^\alpha) \lambda_f(p^{2\alpha})}{p^{\alpha}} & \text{ if } p \nmid M \\ 
 \left( 1- \frac{a_{M,\epsilon}(p)}{p}\right)^{-1} & \text{ if } p \mid M.
\end{cases}$$

Let $$\Delta_{\kappa,N,\epsilon}^\star(m,n) = c_\kappa \sum_{f \in H_\kappa^\star(N,\epsilon)}   \frac{\overline{\lambda_f(m)}\lambda_f(n)}{||f||_N^2}.$$ Recall the definition of $R(M,L,\epsilon)$ from the statement of Theorem \ref{thm:petersson}, which we rearrange to $$R(M,L,\epsilon) = \frac{\psi(M)}{\psi(ML)}\prod_{\substack{p^2 \mid L \\ p \nmid M}} \left(1- \frac{1}{p^2}\right)^{-1}\prod_{\substack{p \mid L \\ p \nmid M}} \left(1+ \frac{1}{p}\right) \prod_{p \mid (M,L)} \left( 1- \frac{a_{M,\epsilon}(p)}{p}\right)^{-1}.$$ 
We have then that \begin{equation}\label{eq:Delta}\Delta_{\kappa,N,\epsilon}(m,n)=  \sum_{LM=N}R(M,L,\epsilon)\sum_{\substack{\ell \mid L^\infty \\ (\ell,M)=1}} \frac{\overline{\epsilon}(\ell)}{\ell} \Delta_{\kappa,M,\epsilon}^\star(m,n\ell^2) . \end{equation} This is analogous to the first half of \cite[Prop.~4.1]{BBDDM}.  Now we would like to invert this formula, and we prepare for this with two lemmas.  
\begin{lemma}\label{lem:R}
Let $\alpha,\beta\geq 0$ and $ 0 \leq \gamma \leq \beta$ and $\cond_p (\epsilon) \leq \beta - 1$.  Then \begin{equation}\label{Req} R(p^\beta,p^\alpha,\epsilon)R(p^\gamma,p^{\beta-\gamma},\epsilon) = R(p^\gamma,p^{\alpha+\beta-\gamma},\epsilon).\end{equation}
\end{lemma}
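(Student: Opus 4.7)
The plan is to verify \refs{Req} by evaluating both sides directly from the definition of $R(M,L,\epsilon)$ and doing a finite case check on the exponents $\alpha$, $\gamma$, and $\beta-\gamma$. Since $R(p^a,p^b,\epsilon)$ equals $p^{-b}$ times a local Euler factor depending only on $p$, the $L^{-1}$-parts immediately contribute $p^{-\alpha}\cdot p^{-(\beta-\gamma)}=p^{-(\alpha+\beta-\gamma)}$ on each side and cancel. The work is therefore to match the Euler-factor parts, each of which takes one of three values: $1$, $(1-p^{-2})^{-1}$, or $(1-a_{p^\nu,\epsilon}(p)/p)^{-1}$ for the relevant $\nu\in\{\beta,\gamma\}$.

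First I will extract from Lemma \ref{Ogg} exactly what the hypothesis $\cond_p(\epsilon)\leq \beta-1$ says about $a_{p^\beta,\epsilon}(p)$. The hypothesis forces $\beta\geq 1$, and then: for $\beta=1$, $\epsilon$ is unramified at $p$ and $p^2\nmid p^\beta$, so $a_{p^\beta,\epsilon}(p)=1/p$; for $\beta\geq 2$, $\epsilon$ is a character mod $p^{\beta-1}$ and $p^2\mid p^\beta$, so $a_{p^\beta,\epsilon}(p)=0$. This is the only place the hypothesis enters.

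Next I carry out the case analysis. Case (i): $\alpha=0$. Then $R(p^\beta,1,\epsilon)=1$ and the identity is tautological. Case (ii): $\alpha\geq 1$ and $\gamma=0$. The left-hand side contributes $(1-a_{p^\beta,\epsilon}(p)/p)^{-1}$ from $R(p^\beta,p^\alpha,\epsilon)$ and a factor from $R(1,p^\beta,\epsilon)$ equal to $1$ if $\beta=1$ and $(1-p^{-2})^{-1}$ if $\beta\geq 2$, while the right-hand side contributes $(1-p^{-2})^{-1}$ (since $\alpha+\beta\geq 2$); in both possibilities for $\beta$, the values of $a_{p^\beta,\epsilon}(p)$ from the previous step make the two sides agree. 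Case (iii): $\alpha,\gamma\geq 1$. The sub-case $\gamma=\beta$ forces $\beta-\gamma=0$ and both sides reduce to $(1-a_{p^\beta,\epsilon}(p)/p)^{-1}$. The sub-case $1\leq\gamma<\beta$ produces on the left the product $(1-a_{p^\beta,\epsilon}(p)/p)^{-1}(1-a_{p^\gamma,\epsilon}(p)/p)^{-1}$ and on the right only $(1-a_{p^\gamma,\epsilon}(p)/p)^{-1}$, so we need the extra $(1-a_{p^\beta,\epsilon}(p)/p)^{-1}$ factor to equal $1$. Since $\gamma\geq 1$ and $\gamma<\beta$ force $\beta\geq 2$, the hypothesis gives $a_{p^\beta,\epsilon}(p)=0$ and the factor disappears.

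The main obstacle is conceptual rather than computational: one must recognize that the hypothesis $\cond_p(\epsilon)\leq \beta-1$ is inserted precisely to supply the vanishing $a_{p^\beta,\epsilon}(p)=0$ for $\beta\geq 2$, which is what lets the extra factor from $R(p^\beta,p^\alpha,\epsilon)$ on the left collapse in the final sub-case. Once this is observed, the verification reduces to routine bookkeeping across the cases above.
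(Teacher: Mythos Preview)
Your proof is correct and follows essentially the same approach as the paper: a direct case check on the exponents after unpacking the definition of $R(p^a,p^b,\epsilon)$, with the key observation that the hypothesis $\cond_p(\epsilon)\leq\beta-1$ forces $a_{p^\beta,\epsilon}(p)=0$ when $\beta\geq2$ (and $=1/p$ when $\beta=1$). Your organization is slightly cleaner than the paper's in that you first strip off the common factor $p^{-(\alpha+\beta-\gamma)}$ and then match only the Euler factors, whereas the paper carries the $\psi(p^a)/\psi(p^{a+b})$ normalization through each case, but the content is the same.
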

\begin{proof}
We check cases.  

\textbf{Case $\alpha\geq 0$ and $\beta = \gamma$.} Note that $R(p^\gamma,1,\epsilon)=1$ for any $\gamma \geq 0$.

\textbf{Case $\alpha = 0$.} Note that $R(p^\beta,1,\epsilon)=1$ for any $\beta\geq 0$.

\textbf{Case $\alpha \geq 1$, $\beta =1$ and $\gamma=0$.} We have by hypothesis $\cond_p(\epsilon) = 0$, so $$R(p, p^\alpha) R(1,p) = \frac{\psi(p)}{\psi(p^{\alpha+1})} \left( 1-\frac{1}{p^2}\right)^{-1} \frac{1}{\psi(p)}\left(1+ \frac{1}{p}\right).$$On the other hand, we also have $$R(1,p^{\alpha+1})  = \frac{1}{\psi(p^{\alpha+1})} \left(1-\frac{1}{p^2} \right)^{-1}\left( 1 + \frac{1}{p}\right).$$

\textbf{Case $\alpha \geq 1$, $\beta \geq 2$ and $\gamma=0$.} We have $p \mid (p^\beta,p^\alpha)$ and $a_{p^\beta,\epsilon}(p) = 0$, so $R(p^\beta,p^\alpha,\epsilon)=p^{-\alpha}$ and 
$$R(1,p^\beta,\epsilon) = \frac{1}{\psi(p^\beta)} \left( 1- \frac{1}{p^2}\right)^{-1} \left( 1+ \frac{1}{p}\right),$$ and $$ R(1,p^{\alpha+\beta},\epsilon) = \frac{1}{\psi(p^{\alpha+\beta})} \left( 1- \frac{1}{p^2}\right)^{-1} \left( 1+ \frac{1}{p}\right).$$

\textbf{Generic case $\alpha \geq 1$, $\beta \geq 2$, $1 \leq \gamma \leq \beta-1$, and $\cond_p(\epsilon)\leq \beta-1$.} We have $$R(p^\beta,p^\alpha, \epsilon) = \frac{\psi(p^{\beta})}{\psi(p^{\alpha+ \beta})}$$ $$ R(p^\gamma, p^{\beta-\alpha},\epsilon) = \frac{\psi(p^{\gamma})}{\psi(p^{\beta})}  \left( 1- \frac{a_{p^\gamma,\epsilon}(p)}{p}\right)^{-1}$$ and  $$ R(p^\gamma, p^{\beta+\alpha- \gamma},\epsilon) = \frac{\psi(p^{\gamma})}{\psi(p^{\alpha +\beta})}  \left( 1- \frac{a_{p^\gamma,\epsilon}(p)}{p}\right)^{-1}.$$

The above cover all the cases in the lemma.
\end{proof}
\begin{lemma}\label{lem:inversionhelper}
Let $N\in \N$, $N=LM$, and $M= WQ$.  Then $$R(M,L,\epsilon)R(W,Q,\epsilon) \delta_{\cond(\epsilon) \mid W}  
= R(W,LQ,\epsilon) \delta_{\cond(\epsilon) \mid W} 
.$$
\end{lemma}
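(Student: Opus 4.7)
My plan is to reduce Lemma~\ref{lem:inversionhelper} to the prime-power identity established in Lemma~\ref{lem:R} via multiplicativity of $R$.

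First, I observe that $R(M,L,\epsilon)$ is multiplicative in the pair $(M,L)$: its definition as a product over primes $p^2 \mid L$ with $p \nmid M$ and over primes $p \mid (M,L)$ factors cleanly, as does the factor $1/L = \prod_p p^{-v_p(L)}$. The indicator $\delta_{\cond(\epsilon) \mid W}$ is also multiplicative, being the product of the local indicators $\delta_{\cond_p(\epsilon) \leq v_p(W)}$. Hence it suffices to check the claimed identity one prime at a time. Writing $\alpha = v_p(L)$, $\beta = v_p(M)$, $\gamma = v_p(W)$, the relation $M = WQ$ forces $v_p(Q) = \beta - \gamma$ with $0 \leq \gamma \leq \beta$, and then $v_p(LQ) = \alpha + \beta - \gamma$. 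The local identity we need is
\[
R(p^\beta, p^\alpha, \epsilon)\, R(p^\gamma, p^{\beta-\gamma}, \epsilon)\, \delta_{\cond_p(\epsilon) \leq \gamma}
= R(p^\gamma, p^{\alpha+\beta-\gamma}, \epsilon)\, \delta_{\cond_p(\epsilon) \leq \gamma}.
\]

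Next, I split into two cases depending on $\gamma$ versus $\beta$. If $\gamma = \beta$, then $Q$ contributes $p^0 = 1$, so $R(p^\gamma, p^{\beta-\gamma},\epsilon) = R(p^\beta, 1, \epsilon) = 1$, and the left and right sides are manifestly equal. If $\gamma < \beta$, then under the hypothesis $\cond_p(\epsilon) \leq \gamma$ we have $\cond_p(\epsilon) \leq \beta - 1$, which is precisely the hypothesis required to apply Lemma~\ref{lem:R} with the parameters $(\alpha, \beta, \gamma)$. That lemma then delivers the local identity directly.

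The main obstacle is simply bookkeeping: making sure that the hypothesis $\cond_p(\epsilon) \leq \beta - 1$ needed for Lemma~\ref{lem:R} is always implied by the global hypothesis $\cond(\epsilon) \mid W$ in the non-trivial range $\gamma < \beta$, and checking that the degenerate case $\gamma = \beta$ (where Lemma~\ref{lem:R} might fail) is handled by direct inspection. Once these are verified, multiplying the local identities over all primes $p$ yields the stated global formula.
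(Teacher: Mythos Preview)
Your proposal is correct and follows essentially the same route as the paper: reduce to prime powers by multiplicativity, then handle the local identity by invoking Lemma~\ref{lem:R} when $\cond_p(\epsilon)\leq\beta-1$ and dispose of the remaining case $\gamma=\beta$ using $R(p^\beta,1,\epsilon)=1$. The only cosmetic difference is that the paper splits first on whether $\cond_p(\epsilon)\leq\beta-1$ and deduces $\gamma=\beta$ in the complementary case, whereas you split first on $\gamma=\beta$ versus $\gamma<\beta$; the logic is equivalent.
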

\begin{proof}
Both sides of the desired formula are multiplicative.  Let $\alpha = v_p(L)$, $\beta= v_p (M)$, and $\gamma = v_p(W).$ It then suffices to check that \begin{equation}\label{Rpeq}R(p^\beta,p^\alpha,\epsilon) R(p^\gamma,p^{\beta-\gamma},\epsilon) \delta_{\gamma \geq \cond_p (\epsilon)} = R(p^\gamma,p^{\alpha + \beta-\gamma},\epsilon)\delta_{\gamma \geq \cond_p (\epsilon)}.\end{equation} If $\cond_p (\epsilon) \leq \beta - 1$ then \eqref{Rpeq} is true by Lemma \ref{lem:R}.  So, suppose not.  Then $\beta \leq \cond_p (\epsilon ) \leq \gamma$, but $W \mid M$ so $\gamma \leq \beta$ and so $\beta = \gamma$.  In the case $\beta = \gamma$ the equation \eqref{Rpeq} is true because $R(p^\beta,1,\epsilon) = 1$.  
\end{proof}

We are now prepared to invert \eqref{eq:Delta} using Lemma \ref{lem:inversionhelper}.
We calculate \begin{equation*}\begin{split} & \sum_{LM= N} \mu(L) R(M,L,\epsilon) \sum_{\substack{\ell \mid L^\infty \\ (\ell,M) = 1}} \frac{\overline{\epsilon}(\ell)}{\ell} \Delta_{\kappa,M,\epsilon}(m,n\ell^2) \\
& =  \sum_{LM= N} \mu(L) R(M,L,\epsilon) \sum_{\substack{\ell \mid L^\infty \\ (\ell,M) = 1}} \frac{\overline{\epsilon}(\ell)}{\ell} \sum_{QW=M} R(W,Q,\epsilon) \sum_{\substack{q \mid Q^\infty \\ (q,W) = 1}} \frac{\overline{\epsilon}(q)}{q} \Delta_{\kappa,W,\epsilon}^\star(m,n\ell^2q^2) \\ 
& =  \sum_{LM=N} \mu(L) \sum_{QW = M} R(M,L,\epsilon) R(W,Q,\epsilon) \sum_{\substack{b \mid (LQ)^\infty \\ (b,W) = 1}} \frac{\overline{\epsilon}(b)}{b} \Delta_{\kappa,W,\epsilon}^\star(m,nb^2) \\ 
& =  \sum_{WX = N} R(W,X, \epsilon) \sum_{\substack{b \mid X^\infty \\ (b,W) = 1}} \frac{\overline{\epsilon}(b)}{b} \Delta_{\kappa,W,\epsilon}^\star(m,nb^2) \sum_{LQ = X} \mu(L) \\
& =  R(N,1,\epsilon) \Delta_{\kappa, N,\epsilon}^\star(m,n) \\ 
& =  \Delta_{\kappa, N,\epsilon}^\star(m,n).
\end{split}\end{equation*}
where the first equals sign is \eqref{eq:Delta}, the third is by Lemma \ref{lem:inversionhelper}, and the fourth is Mobius inversion.  
\end{proof}

\newcommand{\etalchar}[1]{$^{#1}$}
\def\Dbar{\leavevmode\lower.6ex\hbox to 0pt{\hskip-.23ex \accent"16\hss}D}
  \def\cftil#1{\ifmmode\setbox7\hbox{$\accent"5E#1$}\else
  \setbox7\hbox{\accent"5E#1}\penalty 10000\relax\fi\raise 1\ht7
  \hbox{\lower1.15ex\hbox to 1\wd7{\hss\accent"7E\hss}}\penalty 10000
  \hskip-1\wd7\penalty 10000\box7}
  \def\cfudot#1{\ifmmode\setbox7\hbox{$\accent"5E#1$}\else
  \setbox7\hbox{\accent"5E#1}\penalty 10000\relax\fi\raise 1\ht7
  \hbox{\raise.1ex\hbox to 1\wd7{\hss.\hss}}\penalty 10000 \hskip-1\wd7\penalty
  10000\box7} \def\cprime{$'$}


 \end{document}